\documentclass[11pt,a4paper]{article}
\usepackage[margin=1in]{geometry}
\usepackage{amsmath,amssymb}
\usepackage{amsthm}
\usepackage{booktabs}
\usepackage{multirow}
\usepackage{array}
\usepackage{graphicx}
\usepackage{float}
\usepackage{algorithm}
\usepackage{algpseudocode}
\usepackage[numbers,sort&compress]{natbib}
\usepackage{hyperref}
\usepackage{cleveref}
\hypersetup{colorlinks=true, linkcolor=blue, citecolor=blue, urlcolor=blue}

\graphicspath{{figs/}{./}}

\newtheorem{theorem}{Theorem}[section]
\newtheorem{lemma}[theorem]{Lemma}
\newtheorem{proposition}[theorem]{Proposition}
\newtheorem{corollary}[theorem]{Corollary}

\newtheorem{remark}[theorem]{Remark}

\newcommand{\Lap}{\Lambda}
\newcommand{\Ebase}{u_{\mathrm{base}}}
\newcommand{\Rres}{\mathcal R_\theta}
\newcommand{\Ea}{E_\alpha}
\newcommand{\CD}{{}^{C}\!D_t^\alpha}

\begin{document}
\title{FrFNO: Injecting the Analytic Mittag--Leffler Propagator into a
Resolution--Robust Neural Operator for Space--Time Fractional PDEs}

\author{Guofei Pang\\
School of Mathematics, Southeast University, Nanjing 210096, China\\
\textit{Corresponding author}: guofei\_pang@seu.edu.cn}
\date{}

\maketitle

\begin{abstract}
Fractional partial differential equations couple a memory-dependent time
derivative with a nonlocal fractional Laplacian, and their repeated solution under
varying fractional orders, initial data, or diffusivity fields is computationally expensive. Neural
operators offer a fast surrogate, but standard architectures must relearn the
dominant linear fractional evolution from data. We propose the \emph{Fractional Fourier Neural Operator} (FrFNO), a resolution-robust conditional operator. It injects the analytic Mittag--Leffler response of the frozen linear part as a once-precomputed, resolution-independent propagator table, and trains a spectral convolutional network only on the residual induced by variable coefficients and nonlinear advection, conditioned continuously on both fractional orders. The central theoretical result is that under zero-shot super-resolution the injected propagator fills the out-of-band modes that a standard Fourier neural operator sets to zero, replacing the out-of-band full-field tail by the smaller residual tail (a $K$-independent constant-factor reduction under weak perturbation). The residual and the full solution carry the same Sobolev order, so the advantage is amplitude reduction rather than a steeper tail. Mesh refinement drives the error to a residual-controlled floor. On a nonlinear two-dimensional space--time fractional Burgers problem FrFNO achieves the lowest
relative $L^2$ error among six baselines (FNO, PINO, PDNO, CNO, DeepONet,
U-Net) at the training resolution and under zero-shot super-resolution, yields the smallest spectral phase error, and remains the best at the integer-order limit and for long integration windows. The same construction extends to coupled fractional systems such as fractional Allen--Cahn and Navier--Stokes equations. Code, training scripts, and reference outputs are available at
\url{https://github.com/Derek2021Pang/FrFNO}.
\end{abstract}

\noindent\textbf{Keywords.} Fractional PDEs, Neural operator, Mittag--Leffler propagator, Zero-shot super-resolution, Fourier neural operator, Error analysis

\medskip
\section{Introduction}\label{sec:intro}

Fractional evolution equations arise as the macroscopic limit of stochastic
processes with long memory or long-range jumps. In viscoelastic media,
power-law stress relaxation $G(t)\sim t^{-\alpha}$ with $0<\alpha<1$
interpolates between elastic and viscous response and is naturally described by
a time-fractional derivative~\cite{bagley1983,mainardi2001}. In heterogeneous
and porous media, continuous-time random walks with heavy-tailed waiting times
yield sub-diffusion governed by a Caputo time
derivative~\cite{metzler2000,podlubny1999}. L\'evy-type transport
and long-range spatial interactions produce heavy-tailed jump distributions
whose continuum limit is the fractional Laplacian $(-\Delta)^s$, a nonlocal
operator~\cite{benson2000,caffarelli2007,bucurvaldinoci2016,lischke2020whatis}. Replacing the
integer-order time derivative and the classical Laplacian by their fractional
counterparts therefore captures memory and non-locality that the classical
diffusion equation cannot, and the resulting space--time fractional PDEs form
a standard modeling framework across physics, mechanics, and
hydrology~\cite{kilbas2006}. We mainly focus on the two-dimensional space--time
fractional nonlinear equation, the space- and time-fractional viscous Burgers
model~\cite{sugimoto1991,momani2006,mao2017fb}
\begin{equation}\label{eq:fpde-intro}
{}^{C}D_t^{\alpha} u(x,y,t)
   = -\nu(x,y)\,(-\Delta)^{s} u(x,y,t)
      - \beta\, u\,\partial_x u ,\qquad (x,y)\in\Omega=(0,1)^2,\ t\in(0,T],
\end{equation}
with homogeneous Dirichlet boundary conditions, Caputo order $0<\alpha\le 1$,
fractional-Laplacian order $0<s\le 1$, a spatially variable diffusivity
$\nu(x,y)>0$, and advective strength $\beta\ge 0$. The classical integer-order
variable-coefficient viscous Burgers equation is recovered at $\alpha=s=1$.
\Cref{sec:breadth} further applies the identical construction to linear
variable-coefficient fractional diffusion and to fractional Fisher--KPP and
Allen--Cahn reaction--diffusion equations~\cite{bucurvaldinoci2016,dipierrovaldinoci2018}, demonstrating that the gain does not rely on the advective structure.

Two structural features make solving \cref{eq:fpde-intro} expensive.
The Caputo derivative~\cite{podlubny1999,kilbas2006} is a history convolution
($O(n^2)$ past steps for an $L_1$ scheme at step $n$), and the fractional
Laplacian $(-\Delta)^s$ is a global operator coupling all grid points. Recent
finite-difference, finite-element, and rational-approximation discretizations of
the fractional Laplacian on general
domains~\cite{huang2024gridoverlay,salgado2026semianalytic,hao2026simplefd}
reduce per-solve constants but retain the global coupling and, for time-fractional
evolution, the history cost. When
$(\alpha,s)$, $u_0$, or $\nu$ are swept (uncertainty quantification, inverse
identification, parameter studies, design optimization), this becomes a many-query
burden of thousands of solves. A surrogate that maps $(u_0,\nu,\alpha,s)$ directly
to $u(T)$, evaluable on a finer grid than it was trained on, is therefore
attractive.

Neural operators learn maps between function spaces. The Fourier neural operator
(FNO)~\cite{li2021fno,kovachki2023no} parameterizes layer-wise integral kernels by
truncated Fourier multipliers and admits cross-resolution evaluation (subject to
aliasing at high wavenumbers), with approximation and
aliasing theory developed in~\cite{kovachki2021ua,lanthaler2024disc}. DeepONet~\cite{lu2021deeponet}
uses a branch--trunk decomposition (a branch network encodes the input function on a
fixed sensor grid and a trunk network encodes the evaluation coordinates) and
approximates general nonlinear operators by the universal approximation theorem, but
the fixed grid prevents resolution-independent evaluation. Representation variants include localized,
geometry-aware, attention, and convolutional operators~\cite{liuschaffini2024lno,li2023geofno,cao2023gt,raonic2023cno,zang2025dgenno}.
FNO-based surrogates have also been embedded in optimal control and inverse-design pipelines~\cite{margenberg2024fno}.
All of these methods learn the full response from data. Physics-informed variants (PINNs~\cite{raissi2019pinn,zou2025multihead},
PINO~\cite{li2024pino}, PDNO~\cite{shin2024pdno}, and fractional
PINNs~\cite{pang2019fpinn,guo2022mcfpinn}) remain instance-level or full-solution solvers.

These methods are powerful, yet for \cref{eq:fpde-intro} they all force the
network to learn something that is, in fact, known in closed form.
For constant diffusivity $c$ ($\nu\equiv c$), the \emph{linear} part
$\CD u=-c(-\Delta)^s u$ is diagonal in the sine basis (for the spectral
Dirichlet fractional Laplacian), and each mode with
eigenvalue $\lambda$ of $-\Delta$ evolves
through the one-parameter Mittag--Leffler function~\cite{podlubny1999}
$E_\alpha(-c\,t^\alpha\lambda^{s})$. This Mittag--Leffler propagator encodes exactly the long memory
and the algebraic fractional decay. It is the part of the solution that is
simultaneously the most characteristic of fractional dynamics and the most awkward
for a generic network to reproduce. The premise of this paper is that the
dominant linear fractional response should be injected exactly. Only the
genuinely unresolved residual needs to be learned. This residual captures
the effect of variable coefficients and nonlinear
advection.

We build the \emph{Fractional Fourier Neural Operator} (FrFNO) around an exact,
parameter-free analytic base field. An offline table stores the discrete modal
propagator on a grid of the two orders $(\alpha,s)$ and a log-spaced
resolution-independent coordinate $z=\lambda^s$; at run time the base field is
a single diagonal spectral multiplier in the discrete sine transform (DST) basis,
$\Ebase=\mathrm{iDST}\big[g^{N_t}_{\alpha,s,c}\cdot\mathrm{DST}(u_0)\big]$, and a
spectral-convolutional residual network, continuously conditioned on $(\alpha,s)$
through Feature-wise Linear Modulation (FiLM)~\cite{perez2018film}, approximates only $r=u-\Ebase$.
The propagator is alias-free and tabulated on a resolution-independent
coordinate, so under mesh refinement it fills the out-of-band modes that a
matched FNO sets to zero. The residual carries the same Sobolev order as the
full field. Its tail is smaller by a constant factor, not steeper, and
with $K$ fixed the error converges to a constant floor that FrFNO places below a
matched FNO (\cref{sec:theory}).
The closest work in spirit is the differentiable Mittag--Leffler spectral layer
of DFSC~\cite{hu2026dfsc}, which implements the same ``known propagator plus
learned correction'' split for a finite-dimensional Caputo linear system.
FrFNO extends it to the field setting with a spatial fractional Laplacian,
continuous $(\alpha,s)$ conditioning, a resolution-independent table, and the
error-floor analysis above.

Our contributions are as follows.
\begin{enumerate}
\item \textbf{Method and analysis.} FrFNO injects the analytic Mittag--Leffler
propagator as an exact, $(\alpha,s)$-conditioned base field through a
resolution-independent table and learns only the residual. The analysis gives a
propagator bound via fractional Duhamel--Dyson expansion, a
resolution-independent propagator-table approximation, a same-order
residual-regularity result (the residual and the full field share one Sobolev
order), and a band-by-band FrFNO--FNO floor comparison showing that the
tabulated base field fills exactly the modes beyond the training band
(\cref{sec:theory}).
\item \textbf{Controlled verification and generality.} Against six matched baselines,
FrFNO is the best at every resolution for nonlinear Burgers, a four-times longer
time horizon, and the integer-order limit, and passes five bootstrap-tested theory checks
(\cref{sec:numerics}). The construction transfers to periodic fractional diffusion equations,
the Fisher--KPP and Allen--Cahn reaction--diffusion equations, and coupled
systems (including fractional Navier--Stokes) via a matrix-valued
propagator (\cref{sec:breadth}).
\item \textbf{Reproducibility.} We release code, pre-trained weights, and full
pseudocode. A controlled ablation cleanly isolates the gain attributable to the
analytic propagator. Systematic sweeps over seeds, order grids, and learning
rates confirm robustness across all hyperparameters.
\end{enumerate}

\section{Problem formulation and spectral setting}\label{sec:setup}

\subsection{The space--time fractional equation}

Let $\Omega=(0,1)^2$ and write $\boldsymbol{x}=(x,y)$. We mainly study the scalar
space--time fractional nonlinear equation
\begin{align}
{}^{C}D_t^{\alpha} u(\boldsymbol{x},t)
   &= -\nu(\boldsymbol{x})\,(-\Delta)^{s}u(\boldsymbol{x},t)
      - \beta\,u(\boldsymbol{x},t)\,\partial_x u(\boldsymbol{x},t),
      && \boldsymbol{x}\in\Omega,\ 0<t\le T, \label{eq:pde1}\\
u(\boldsymbol{x},t)&=0, && \boldsymbol{x}\in\partial\Omega,\ t\in[0,T],\label{eq:pde-bc}\\
u(\boldsymbol{x},0)&=u_0(\boldsymbol{x}), && \boldsymbol{x}\in\Omega,\label{eq:pde-ic}
\end{align}
with $0<\alpha\le 1$, $0<s\le 1$, $\beta\ge 0$, and a smooth, strictly positive
coefficient field $0<\nu_{\min}\le\nu(\boldsymbol{x})\le\nu_{\max}<\infty$. The
operator ${}^{C}D_t^\alpha$ is the Caputo derivative of order $\alpha$ and
$(-\Delta)^s$ is the spectral fractional Laplacian with homogeneous Dirichlet
data. We use a scalar nonlinearity (\(u\partial_x u\)) to keep the exposition clear. The method extends unchanged to vector-valued systems.

For $0<\alpha<1$ and a sufficiently regular $v$,
\begin{equation}\label{eq:caputo}
{}^{C}D_t^\alpha v(t)
 =\frac{1}{\Gamma(1-\alpha)}\int_0^t
   (t-\tau)^{-\alpha}v'(\tau)\,d\tau ,
\end{equation}
with the classical derivative recovered as $\alpha\to1^{-}$. The Caputo derivative
uses the ordinary initial value $v(0)=u_0$, which is the physically natural
choice~\cite{podlubny1999,kilbas2006}.

\paragraph{Spectral fractional Laplacian}
With Dirichlet data the negative Laplacian $-\Delta$ has eigenpairs
\begin{equation}\label{eq:eig-cont}
-\Delta\,\phi_{\boldsymbol{k}}=\lambda_{\boldsymbol{k}}\phi_{\boldsymbol{k}},
\qquad
\phi_{\boldsymbol{k}}(\boldsymbol{x})=2\sin(k_1\pi x)\sin(k_2\pi y),\quad
\lambda_{\boldsymbol{k}}=\pi^2(k_1^2+k_2^2),
\end{equation}
for $\boldsymbol{k}=(k_1,k_2)\in\mathbb N^2$. The spectral fractional Laplacian is
defined by functional calculus~\cite{dinezza2012},
\begin{equation}\label{eq:frac-lap}
(-\Delta)^s v=\sum_{\boldsymbol{k}\ge1}\lambda_{\boldsymbol{k}}^{s}\,
\widehat v_{\boldsymbol{k}}\,\phi_{\boldsymbol{k}},
\qquad
\widehat v_{\boldsymbol{k}}=\langle v,\phi_{\boldsymbol{k}}\rangle_{L^2(\Omega)}.
\end{equation}
The choice of the \emph{spectral} (Dirichlet) fractional Laplacian makes the
analytic propagator a cheap, alias-free operation (two DSTs). On the periodic torus
the spectral and integral (Riesz) operators coincide, and the method applies
verbatim after replacing the DST by an FFT. On a bounded domain, the
\emph{restricted integral} fractional Laplacian is a different operator. It
requires $u=0$ a.e. outside $\Omega$ and its eigenbasis is not the sine
basis. The cheap DST implementation therefore does not transfer verbatim.
\Cref{sec:breadth} reports the periodic Riesz realization.

\subsection{The linear propagator and the Mittag--Leffler function}

For constant $\nu\equiv c$ and $\beta=0$, \cref{eq:pde1} decouples
completely on the sine basis. Each coefficient solves the scalar fractional ODE
\begin{equation}\label{eq:scalar-linear}
{}^{C}D_t^\alpha \widehat u_{\boldsymbol{k}}(t)
 =-c\,\lambda_{\boldsymbol{k}}^{s}\widehat u_{\boldsymbol{k}}(t),
 \qquad \widehat u_{\boldsymbol{k}}(0)=\widehat u_{0,\boldsymbol{k}}.
\end{equation}
Its solution is expressed through the one-parameter Mittag--Leffler function
$\Ea(z)=\sum_{m=0}^\infty z^m/\Gamma(1+m\alpha)$,
\begin{equation}\label{eq:ml-sol}
\widehat u_{\boldsymbol{k}}(t)
 = \Ea\!\left(-c\,t^\alpha\lambda_{\boldsymbol{k}}^{s}\right)
   \widehat u_{0,\boldsymbol{k}}.
\end{equation}
As $\alpha\to1$, $\Ea(z)\to e^z$ and \cref{eq:ml-sol} recovers the classical
heat-semigroup solution; for $\alpha<1$ and large $z$, $\Ea(-z)\sim
(\Gamma(1-\alpha)z)^{-1}$, i.e.\ \emph{algebraic} decay. This algebraic decay is
a characteristic feature that distinguishes memory-dependent fractional diffusion from
classical exponential relaxation, and is built exactly into the injected
propagator and never has to be learned.

Split the operator as ${}^{C}D_t^\alpha u=A_0u+\big(A_1u-\beta\mathcal N(u)\big)$
with $A_0=-c(-\Delta)^s$, $A_1=-(\nu-c)(-\Delta)^s$ and $\mathcal N(u)=u\partial_xu$.
The fractional variation-of-constants (Duhamel) formula gives the mild solution
\begin{equation}\label{eq:mild}
u(t)=E_0(t)u_0+\int_0^t (t-\tau)^{\alpha-1}
E_{\alpha,\alpha}\!\left(-c(t-\tau)^\alpha(-\Delta)^s\right)
\big[A_1u(\tau)-\beta\mathcal N(u(\tau))\big]\,d\tau ,
\end{equation}
where $E_0(t)=E_\alpha(-ct^\alpha(-\Delta)^s)$ and $E_{\alpha,\alpha}$ is the
two-parameter Mittag--Leffler function. The first term is exactly the injected base
field. The Duhamel integral collects the variable-coefficient perturbation, the
nonlinear advection and their interaction. This is precisely the residual that FrFNO
learns. The decomposition is valid for any equation written as a (fractional) linear
principal part plus a remainder. A positive $\nu$ keeps that principal part
dissipative and the mild form well posed.

\subsection{Discrete sine transform and the discrete propagator}

On a uniform grid $x_i=i/N$, $y_j=j/N$, $i,j=0,\dots,N$, with $h=1/N$, zero
boundary values and $n=N-1$ interior points per axis ($(N+1)^2$ nodes in total,
$(N-1)^2=n^2$ interior), the orthonormal type-I discrete sine transform (DST-I)
diagonalizes the Dirichlet spectral calculus.
\begin{equation}\label{eq:dst}
\widetilde v_{k_1,k_2}=\frac{2}{n+1}\sum_{i,j=1}^{n}v_{ij}
\sin\!\frac{k_1\pi i}{n+1}\sin\!\frac{k_2\pi j}{n+1},\qquad k_1,k_2=1,\dots,n,
\end{equation}
with inverse $v_{ij}=\sum_{k_1,k_2}\widetilde v_{k_1,k_2}
\sin(k_1\pi i/(n+1))\sin(k_2\pi j/(n+1))$. The DST is separable and computed in
$O(N^2\log N)$ operations by the FFT.

The reference solver uses the five-point finite-difference Laplacian, whose interior
symbol is
\begin{equation}\label{eq:eig-disc}
q_{\boldsymbol{k}}^{(N)}
 =\frac{4}{h^2}\left[\sin^2\!\frac{k_1\pi h}{2}
                 +\sin^2\!\frac{k_2\pi h}{2}\right],\qquad h=\frac1N ,
\end{equation}
so that $q_{\boldsymbol{k}}^{(N)}=\lambda_{\boldsymbol{k}}+O(h^2)$. Using the
discrete symbol keeps the analytic base field consistent with the reference solver to
the solver's own order.

Partition $[0,T]$ uniformly with step $\tau=T/N_t$, $t_n=n\tau$. The $L_1$
approximation of the Caputo derivative is
\begin{equation}\label{eq:L1}
{}^{C}D_t^\alpha v(t_n)\approx
\frac{1}{\tau^\alpha}\sum_{j=0}^{n-1}b_j\big[v^{n-j}-v^{n-j-1}\big],
\qquad
b_j=\frac{(j+1)^{1-\alpha}-j^{1-\alpha}}{\Gamma(2-\alpha)},
\end{equation}
where $b_0=1/\Gamma(2-\alpha)$, $b_j>0$, and $b_j\sim j^{-\alpha}/\Gamma(1-\alpha)$.
This is the standard $L_1$ form, whose local truncation error and global error are
both $O(\tau^{2-\alpha})$ (and therefore first order, $O(\tau)$, at $\alpha=1$,
where it reduces to backward Euler). For the scalar linear mode \cref{eq:scalar-linear}, with
modal diffusivity $\mu=c(q_{\boldsymbol{k}}^{(N)})^s$, an implicit treatment of the fractional
diffusion gives the recurrence
\begin{equation}\label{eq:g-rec}
g^n=\frac{\displaystyle b_{n-1}g^0+
\sum_{\ell=1}^{n-1}(b_{n-\ell-1}-b_{n-\ell})g^\ell}
{\,b_0+\tau^\alpha\mu\,},\quad g^0=1, \, \mbox{where} \, g^k = v^k/v^0 .
\end{equation}
The terminal value $g^{N_t}_{\alpha,s,c}(\lambda)$ is the \emph{discrete
modal propagator}; it converges to the continuum modal multiplier
$\Ea(-cT^\alpha\lambda^s)$ at the $L_1$ order
$O(\tau^{2-\alpha})$~\cite{linxu2007,sunwu2006}. Note that \cref{eq:g-rec} is a
\emph{scalar} recurrence in the single variable $\mu=c\lambda^s$, which enables
offline tabulation of the discrete propagator.

\subsection{The reference nonlinear solver}

For the full problem \cref{eq:pde1} we use an IMEX/Picard scheme~\cite{linxu2007} that treats the
stiff fractional diffusion implicitly and the nonlinear advection explicitly, with
two Picard inner iterations. The upwind choice is necessary for small $s$ where
high-frequency modes are weakly damped. The dominant cost is the $L_1$ history,
which scales as $O(BN^2N_t^2)$ for a batch of $B$ fields. This is precisely the cost the learned surrogate
avoids at inference. The full discretization formula is in \cref{app:ref}. Training
labels use resolution--time-step pairs chosen so that the reference solution is
converged to well below the surrogate error being measured. For the held-out test
set, a single frozen ground truth is generated on a $513^2$ grid with $N_t=6400$
using a GPU-accelerated implementation of the above scheme ($48$ independent
samples), and then locally averaged to each evaluation grid ($17^2,65^2,129^2,257^2$).

Because the variable-coefficient, nonlinear problem admits no closed-form
solution, \emph{all training and test labels are numerical reference solutions}.
Three Mittag--Leffler objects must be kept apart. The first is the \emph{continuum analytic}
response $E_\alpha(-ct^\alpha\lambda^s)$ of~\cref{eq:ml-sol}. The second is its \emph{discrete
modal propagator} $g^{N_t}$ of~\cref{eq:g-rec}, an $O(\tau^{2-\alpha})$
approximation injected as the parameter-free base field. The third is the
\emph{full-problem numerical reference}, which is the only object used as a label. In the
purely linear limit the third object agrees with the injected
propagator to a relative $L^2$ error of $10^{-5}$--$10^{-6}$. Reported accuracies are always measured
against the converged nonlinear reference. Solver convergence verification is in the
supplementary material (Section S1).

\subsection{Data, conditioning variables, and error metric}

Initial fields $u_0$ are multiscale random sine fields with a
$(i^2+j^2)^{-1/2}$ high-frequency decay on the Dirichlet eigenbasis; after synthesis
each field is rescaled by its interior root-mean-square (RMS) amplitude to a target
value of $0.5$, so that all initial fields share a common amplitude scale and relative
errors are comparable across samples. The coefficient fields $\nu$ are smooth \emph{log-normal} random fields generated by a truncated Karhunen--Lo\`eve expansion on a cosine basis ($20$ modes, spectral decay $\beta_{\rm KL}=1$); writing the KL expansion as $g$, the diffusivity is
$\nu=\exp(g)/\frac{1}{|\Omega|}\int_\Omega \exp(g)\,dx$, which enforces strict positivity and unit spatial
mean. Writing $\bar\nu := \frac{1}{|\Omega|}\int_\Omega \nu\,dx$ for the spatial average, the normalization gives $\bar\nu=1$, so that the analytic base field is always evaluated at the uniform coefficient $\bar\nu=1$ and the
variable-coefficient perturbation is measured relative to a uniform baseline. Full generation formulas, parameters, and the training-time pairing of $(u_0,\nu,\alpha,s)$ are
given in~\cref{app:data}. The operator is conditioned on the
pair $(\alpha,s)$ on a uniform grid $\mathcal A\times\mathcal S$ (the main
experiments use $9\times9$ order settings). Each training field is paired with every
order setting. The network receives $(\alpha,s)$ as continuous FiLM inputs, so
it can be queried at off-grid orders.

For a prediction $\hat u$ against a reference $u$, we report the relative
$L^2$ error (in per cent),
\begin{equation}\label{eq:metric}
\frac{\|\hat u-u\|_2}{\|u\|_2}\times100\%,
\end{equation}
averaged over a fixed held-out test set. ``Zero-shot super-resolution'' means
that the network is trained on $17^2$ fields and evaluated, without retraining
or fine-tuning, on $65^2$ and $129^2$ (and, in an additional test, $257^2$)
fields. At an evaluation grid with $N$ intervals per axis, the network produces
a prediction $\hat u^{(N)}$ directly on that grid (the spectral operator is
resolution-invariant), and the reference $u^{(N)}$ is obtained by locally
averaging a frozen $513^2$ high-resolution truth to the same $N$-grid. The
error in \cref{eq:metric} is then computed with both fields on the common
$N$-grid.

\section{The FrFNO method}\label{sec:method}

\subsection{Operator decomposition: exact base plus learned residual}

The central design is an additive decomposition of the solution operator. For a
query $(u_0,\nu,\alpha,s)$ at terminal time $T$, write
\begin{equation}\label{eq:decomp}
\mathcal G(u_0,\nu,\alpha,s)=u(T)
 =\underbrace{\mathcal P_{\alpha,s,T}u_0}_{u_{\rm base}:\ \text{exact linear propagator}}
  +\underbrace{\mathcal R(u_0,\nu,\alpha,s)}_{r:\ \text{unresolved residual}} .
\end{equation}
The base operator $\mathcal P_{\alpha,s,T}$ is the frozen-coefficient linear
Mittag--Leffler propagator of \cref{sec:setup}. It is known, has no
trainable parameters, and is applied as a diagonal DST multiplier. It is the exact
first term of the fractional Duhamel formula~\cref{eq:mild}, while the network
parameterizes the Duhamel integral. The residual $r$ collects everything the frozen
propagator does not contain, the effect of the variable coefficient, of the
nonlinear advection, and of their interaction. FrFNO parameterizes only $r$ by a
neural network $\Rres$ and trains the surrogate
\begin{equation}\label{eq:surrogate}
\hat u=\Ebase+\Rres(u_0,\nu;\alpha,s),\qquad
\Ebase=\mathrm{iDST}\big[g^{N_t}_{\alpha,s,\bar\nu}(q^{(N)}_{\boldsymbol{k}})\,\mathrm{DST}(u_0)\big],
\end{equation}
where $g^{N_t}_{\alpha,s,\bar\nu}(q^{(N)}_{\boldsymbol{k}})$ is the discrete modal multiplier
\cref{eq:g-rec} evaluated on the five-point eigenvalues \cref{eq:eig-disc}. The
base field is detached from the autograd graph and $\Rres$ is trained only. This
is fundamentally different from learning the full solution (an FNO). Since the long-memory, algebraic-tail, high-frequency response is removed from the learning
target, the network sees a smoother, smaller, and weakly nonlinear residual.
\Cref{sec:theory} makes this statement quantitative.

\subsection{Resolution-independent propagator table}\label{sec:table}

Evaluating \cref{eq:g-rec} for every mode and every query would be wasteful. We
precompute a lookup table once offline. The variable on which the multiplier depends
is the single scalar $\mu=\bar\nu\,\lambda^{s}$; equivalently, after fixing
$\bar\nu$, the multiplier is a smooth function of the coordinate $z=\lambda^{s}$. We
therefore discretize the three coordinates separately. The $z$-grid
$\{z_m\}_{m=1}^{M}$ is dense near $z=0$, where $s<1$ produces an algebraic
singularity, and uniform thereafter. The order grids are
$\{\alpha_i\}_{i=1}^{n_a}$ and $\{s_j\}_{j=1}^{n_s}$. We store
\begin{equation}\label{eq:table}
\mathcal T[i,j,m]=g^{N_t}_{\alpha_i,s_j,\bar\nu}(z_m),
\end{equation}
with each entry computed by the scalar recurrence \cref{eq:g-rec}. At run time, given the
two-dimensional eigenvalue field $q^{(N)}_{\boldsymbol{k}}$, we compute
$z_{\boldsymbol{k}}=(q^{(N)}_{\boldsymbol{k}})^{s}$ and obtain the multiplier field by trilinear interpolation in
$(\alpha,s,z)$.
\begin{equation}\label{eq:propat}
g^{N_t}_{\alpha,s,\bar\nu}(q^{(N)}_{\boldsymbol{k}})
 =\operatorname{Interp}_{\mathcal T}\big(\alpha,s,(q^{(N)}_{\boldsymbol{k}})^s\big).
\end{equation}

Storing one multiplier per grid mode would yield $O(N^2)$ distinct entries.
The five-point symbol \cref{eq:eig-disc} mixes two sine arguments, so the table
would be resolution-dependent. The smooth coordinate $z=\lambda^s$ removes this
dependence. We use a fixed one-dimensional log-spaced grid with $M=1600$ points
by default and $M=400$ in the fast setting. This grid is independent of $N$ and
resolves the multiplier $g_{\alpha,s,\bar\nu}^{N_t}(z)$ to machine precision.
A single cached table serves all evaluation grids from $17^2$ to $257^2$.
We validate the table against a direct recurrence. The
maximum absolute discrepancy between the tabulated propagator and a direct
recurrence is $2.2\times10^{-4}$. It is dominated by
interpolation and remains fully below the surrogate error. The one-time build
costs $O(n_an_sMN_t)$ scalar steps. On an RTX~3090 with $41\times41$ orders,
$M=1600$, and $N_t=400$, this takes about $5$\,s. The table is cached globally.
Inference at any resolution costs only $O(N^2)$ scalar trilinear
interpolations. One table thus serves every sample and every evaluation grid.

\subsection{Extension to coupled systems}\label{sec:sysmethod}

The same construction applies verbatim to systems. Consider an $m$-component
equation whose frozen linear principal part is
\begin{equation}\label{eq:syslin}
{}^{C}D_t^\alpha \mathbf u
=-\mathbf D(-\Delta)^s\mathbf u-\mathbf J\mathbf u,
\end{equation}
where $\mathbf D\in\mathbb R^{m\times m}$ is the diffusion matrix and
$\mathbf J\in\mathbb R^{m\times m}$ is the zero-order reaction or coupling
matrix. On the sine basis each mode $\widehat{\mathbf U}_{\boldsymbol k}$
satisfies the matrix ODE
\begin{equation}\label{eq:sysmode}
{}^{C}D_t^\alpha\widehat{\mathbf U}_{\boldsymbol k}
=-\mathbf L_{\boldsymbol k}\widehat{\mathbf U}_{\boldsymbol k}
+\widehat{\mathcal N}_{\boldsymbol k},
\qquad
\mathbf L_{\boldsymbol k}=\mathbf D\lambda_{\boldsymbol k}^s+\mathbf J.
\end{equation}
The frozen response is the matrix-valued Mittag--Leffler propagator
$E_\alpha(-t^\alpha\mathbf L_{\boldsymbol k})$. In all applications here
$m$ is small ($m=2$ or $3$), so we diagonalize $\mathbf L_{\boldsymbol k}$
once per mode, offline, and evaluate the matrix function on its eigenvalues.
The resulting propagator remains a diagonal multiplier in the DST basis and
therefore retains the alias-free, resolution-independent property. Variable
coefficients, inter-mode coupling beyond $\mathbf J$, and all nonlinear terms
stay in the residual. The propagator-error bound \cref{prop:prop} and the
resolution-consistency / error-floor result \cref{prop:floor} carry over to
$(L^2)^m$ whenever $\mathbf L_{\boldsymbol k}$ is uniformly dissipative, that
is, its eigenvalues have uniformly positive real parts. This covers
linearized reaction--diffusion with cross-diffusion, two-component vector
Burgers, non-symmetric coupled systems, and the vorticity formulation of
fractional Navier--Stokes (scalar in 2D, three-vector with vortex stretching
in 3D).

\subsection{The conditional residual network}\label{sec:arch}

\Cref{fig:arch-frfno} gives a schematic overview of the FrFNO architecture.
FrFNO maps a thirteen-channel input tensor to a single residual field, while being continuously
conditioned on the order pair $(\alpha,s)$ via FiLM modulation. The matched FNO baseline uses
the identical spectral backbone but takes only two explicit channels $(u_0,\nu)$ and regresses
the full solution directly rather than learning a residual on top of the analytic base field.
We describe each stage in detail.

\begin{figure}
\centering
\includegraphics[width=0.74\textwidth]{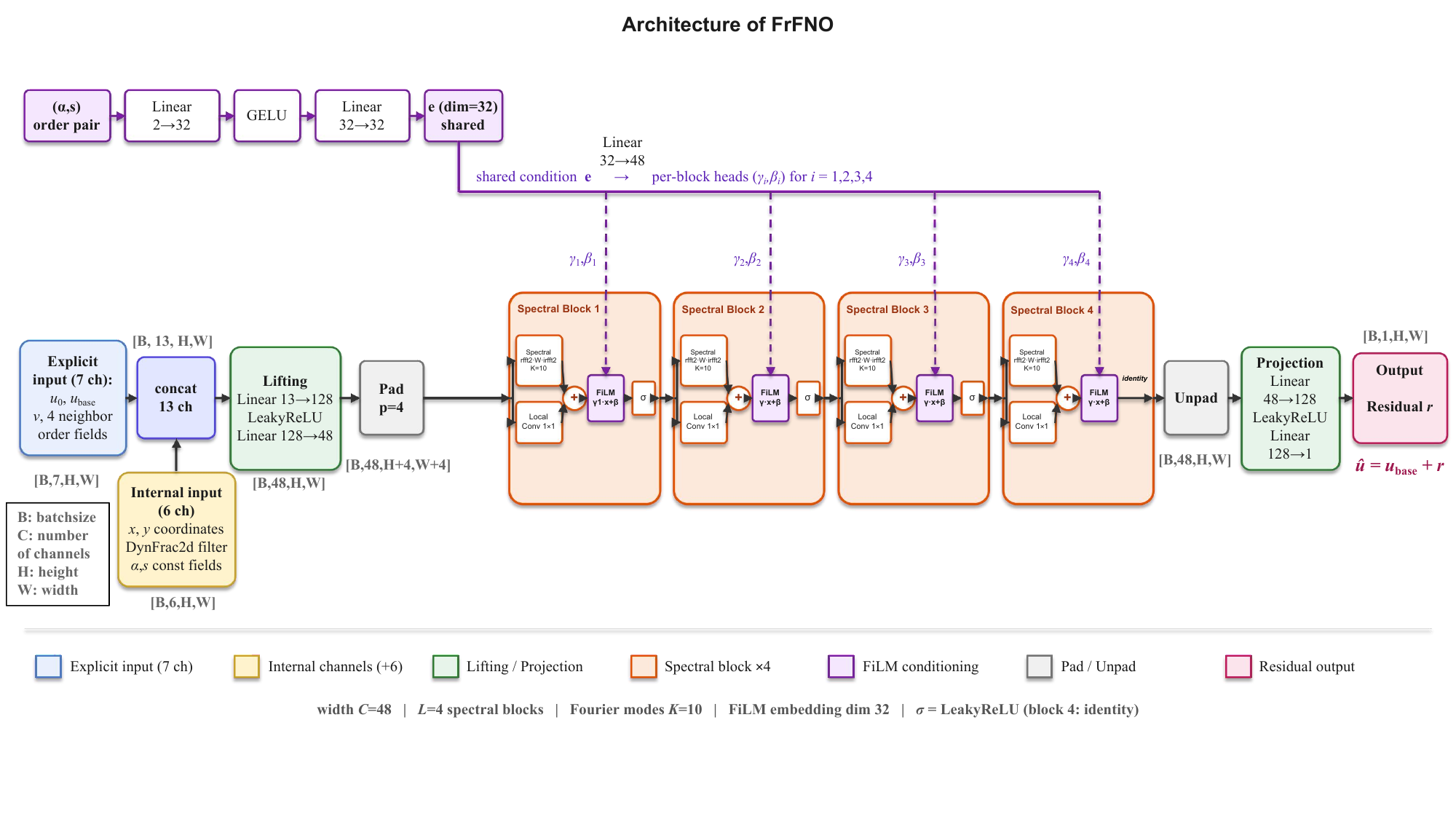}
\caption{FrFNO architecture. A thirteen-channel input (seven explicit fields:
$u_0$, $u_{\mathrm{base}}^{(\alpha,s)}$, $\nu$, and four neighbor-order bases; plus
six internal channels: coordinates, DynFrac2d features, and $\alpha/s$ constant
fields) is lifted to width $48$ and passed through four spectral blocks with FiLM
conditioning. The network outputs a normalized residual added to the analytic base
field.}
\label{fig:arch-frfno}
\end{figure}

\paragraph{Input and output}
The explicit input has seven channels,
\begin{equation}
  x_{\rm in} = [u_0,\; u_{\rm base}^{(\alpha,s)},\; \nu,\;
  \{u_{\rm base}^{(\alpha\pm\Delta\alpha,s)},\;
  u_{\rm base}^{(\alpha,s\pm\Delta s)}\}],
\end{equation}
namely the current base, $\nu$, and four neighbor-order bases used as local
context. Six internal channels (two coordinates, two DynFrac2d fractional features
(a fractional low-pass filter described in~\cref{app:arch}), and two
constant $\alpha,s$ fields) are appended before lifting, for $13$ channels
$(B,13, H,W)$. A pointwise two-layer perceptron ($13\to128\to C=48$, equivalent to
$1\times1$ convolutions) lifts the input; it is zero-padded by $p=4$ to avoid
spectral-convolution boundary artefacts, passed through four spectral blocks, and
un-padded. A symmetric $48\to128\to1$ perceptron then returns the standardised
residual $r_{\rm norm}=(u(T)-u_{\rm base})/\sigma$, where $\sigma$ is the
standard deviation of the training-set solutions $u(T)$ (a fixed global scalar).
This normalization keeps the regression target at $O(1)$, matching the output
scale of the default network initialization and avoiding the vanishing gradients
that would arise from directly predicting the much smaller residual. It also
makes the FrFNO loss directly comparable to the matched FNO, which regresses
$(u-\mu)/\sigma$ with the same $\sigma$. The prediction is recovered at
inference as
\begin{equation}
  \hat u = u_{\rm base} + \sigma\, r_{\rm norm}.
\end{equation}

\paragraph{Spectral block and FiLM conditioning}
Block $i$ computes
\begin{equation}
  v_i = \gamma_i\odot\bigl(\operatorname{SpecConv}_i(v_{i-1})
  + \operatorname{Conv}_{1\times1,i}(v_{i-1})\bigr) + \beta_i,
\end{equation}
followed by LeakyReLU for $i<L$, where $L=4$ is the number of spectral blocks.
The spectral path retains $K=10$ Fourier modes per
axis ($\mathrm{rfft2}$, learned complex weights
$W_i\in\mathbb C^{C\times C\times K\times K}$ with separate positive/negative
horizontal bands, $\mathrm{irfft2}$). The $1\times1$ path carries the discarded
high modes, and the two are added before FiLM. FiLM maps $(\alpha,s)$ through a
shared $2\to32\to32$ GELU encoder to an embedding $e$, with per-block heads
$\gamma_i=1+\mathrm{Linear}_{32\to C}(e)$ and
$\beta_i=\mathrm{Linear}_{32\to C}(e)$ broadcast over space, which supplies smooth
off-grid-order interpolation. Full layer dimensions are in~\cref{app:arch}.

The full FrFNO has $1.88\times10^6$ trainable parameters, dominated by the four
spectral-convolution weight tensors. The matched FNO baseline
shares the identical backbone (the same DynFrac2d fractional features, constant order
fields, FiLM conditioning, spectral blocks, width, depth, and optimizer) but takes
only two explicit channels $(u_0,\nu)$ (no analytic base or neighbor-order fields) and
predicts the mean-centred full solution $(u-\mu)/\sigma$ rather than the residual
$(u-u_{\rm base})/\sigma$, where $\mu$ is the training-set mean. Because the
lifting-layer input dimension differs (eight vs.~thirteen channels), the FNO
parameter count is $1.88\times10^6$ as well, differing by only $640$ weights,
negligible relative to the total. This matched control (identical capacity and
optimizer, with only the physical injection and output paradigm changed) allows
the gain to be attributed cleanly to the analytic propagator rather than to
architecture or capacity.

\subsection{Algorithms}\label{sec:alg}

The training procedure, zero-shot inference, and offline construction of the
resolution-independent propagator table are summarized in Section S4.5 of the
supplementary material. In brief, training generates reference pairs at the low
resolution $N_0=17$, computes the analytic base field from the cached propagator
table, and optimizes the residual network by Adam with cosine annealing. Inference at
any resolution $N$ applies the same base-field construction on the $N$-grid and
evaluates the fixed-$K$ residual network, with no retraining.

\subsection{Computational complexity and temporal super-resolution}\label{sec:complexity}

Let $N$ be the number of grid intervals per axis, $K$ the number of retained Fourier modes per axis, $L$ the
number of spectral blocks, $B$ the batch size, $N_t$ the number of reference
time steps, and $S_{\rm tr}$ the number of training steps. All timings below are measured on an NVIDIA RTX~3090 GPU. The reference
solver dominates because of the $L_1$ history,
which scales as $O(BN^2N_t^2)$. A single nonlinear reference field costs $1.73$~s at $17^2$,
$6.98$~s at $65^2$ and $13.92$~s at $129^2$ (\cref{tab:cost-perquery}). Standard operators require training labels at the evaluation resolution.
Generating all $10\,368$ training fields at $129^2$ would take about
$40$~GPU-hours. FrFNO trains only at $17^2$, so the entire training set is
generated in $22$~s, and zero-shot super-resolution removes the need for
high-resolution labels. The marginal price of the injected
analytic propagator is a one-time, seconds-scale propagator table ($\approx5$~s,
cached for all resolutions) and a base-field application that is two DSTs,
$O(N^2\log N)$ (milliseconds). The residual forward pass costs
$O(L(K^2+N^2\log N))$ ($<0.1$~s per field), and training costs
$O(S_{\rm tr}B\,L(K^2+N_0^2\log N_0))$ ($\approx260$~s
for $8000$ steps).

Because the table stores the multiplier as a function of $T$ (equivalently of
$N_t$), the base field is exact for \emph{any} horizon with no accumulated
time-stepping error. The correct way to predict a long horizon is therefore a single
one-shot evaluation at the target $T$, \emph{not} an autoregressive rollout of
short-horizon predictions. Rolling out compounds the residual network's error and,
for a memory equation, reintroduces the step-by-step history bookkeeping that the
one-shot propagator table eliminates. \Cref{sec:longT} confirms this empirically.

Full reproducibility detailCs (software versions, optimizer settings, random seeds, checkpointing, and CNO training prescription) are collected in Section S4 of the
supplementary material.

\section{Error analysis}\label{sec:theory}

The central theoretical claim of this paper is that the analytic
Mittag--Leffler propagator, tabulated as a resolution-independent spectral
multiplier, is defined on \emph{every} mode at \emph{every} grid. A standard
Fourier neural operator retains only $K$ spectral modes and sets every mode
outside its training band to zero. When the evaluation grid is refined from
$17^2$ to $129^2$, the reference solution acquires many high-wavenumber
components that the network never saw during training and cannot represent. An
FNO that predicts the full solution $u$ loses the entire out-of-band
content of $u$, and this content grows as the grid is refined. FrFNO instead
injects the analytic linear propagator, tabulated to a controlled defect
$\varepsilon_{\rm tab}$, on all modes including the modes that did
not exist on the training grid. It then trains a residual network only inside the
retained band. Its out-of-band error is therefore the tail of the residual
$r=u-u_{\rm base}$, not the tail of the full field $u$.

We make this precise through three results. First, the tabulated multiplier has
an interpolation defect that is independent of the evaluation resolution
(\cref{prop:tab}). Second, the residual $r$ and the full field $u$ carry the
\emph{same} Sobolev order. A variable-coefficient perturbation of the same order as
the fractional principal part consumes the $2s$ derivatives supplied by the
fractional heat semigroup, so the tail amplitude is reduced only by a constant
factor set by the perturbation strength (\cref{prop:reg}).
Third, the fixed-$K$ error floors nevertheless differ term by term, because on
the out-of-band modes the FNO predicts zero while FrFNO predicts the exact
propagator. This band filling, rather than residual smoothing, is what makes
FrFNO degrade much less sharply under zero-shot super-resolution
(\cref{prop:floor}). The rigorous comparison is on the out-of-band term. The
retained-band term is a conventional approximation/generalization error that we
compare numerically.

This section states the main error results. All auxiliary lemmas, propositions,
corollaries and their proofs are collected in~\cref{app:proofs}. We work on
$\mathbb T^d$ ($d=2$ in the numerics. The Dirichlet problem differs only by the
fixed factor $\pi^2$ in the eigenvalues, absorbed into the constants). We split
$\nu=c+\delta\nu$ with $\eta=\|\delta\nu\|_{L_\infty}/c$, and assume
\textbf{(H1)}~$0<\nu_{\min}\le\nu\le\nu_{\max}<\infty$, $\nu\in W^{2,\infty}$,
\textbf{(H2)}~$\alpha\in(0,1]$, $s\in(0,1]$, and \textbf{(H3)}~$u_0\in H^\gamma$,
$\gamma\ge2s$. The frozen propagator
$E_0(t)=\Ea(-ct^\alpha\Lap^{2s})$, where $\Lap=(-\Delta)^{1/2}$ so that
$\Lap^{2s}=(-\Delta)^s$, is contractive on every $H^\gamma$ and obeys the
standard Mittag--Leffler estimates (ML1)--(ML3) and the analytic-regularization
bound $\|\Lap^{2s}E_0(t)\|\le C_\alpha/(ct^\alpha)$ (\cref{app:ml}).

\subsection{Three structural results}\label{sec:th-main}

We build up to the main conclusion in three steps. Each step is a separate
proposition. The final comparison \cref{eq:compare} follows by combining them.

\emph{(i) Propagator error.} A fractional Duhamel--Dyson scattering expansion
$E=E_0+D_1+D_2+\cdots$ bounds the injected base-field error. Moving $\Lap^{2s}$ onto
the smooth data-carrying factor avoids the singular regularization estimate. For
generic data the only time singularity is the integrable kernel
$r^{\alpha-1}(T-r)^{-\alpha}$ ($B(\alpha,1-\alpha)=\pi/\sin\pi\alpha$), and the
Dirichlet-convolution identity rules out factorial growth of higher terms. Combining
\cref{lem:l1,lem:eig} with \cref{prop:prop} gives
\begin{equation}\label{eq:eprop-main}
\varepsilon_{\rm prop}\le
C_\alpha\eta cT^\alpha\|u_0\|_{H^\gamma}
+C_\alpha\beta T^\alpha M_1
+C_\tau\tau^{2-\alpha}+C_h h^2 ,
\end{equation}
with $M_1=C_S\|u_0\|_{H^\gamma}^2$ (nonlinear closure needs $\gamma>d/2$). Full statements
are \cref{eq:properr}--\cref{eq:eprop} in~\cref{app:proofs}. This error is
small in the weakly nonlinear regime.

\emph{(i$'$) The propagator table is resolution-independent.} The modal
propagator depends on the grid only through one scalar stiffness coordinate,
$z_{\boldsymbol k}^{(N)}=cT^\alpha(q_{\boldsymbol k}^{(N)})^s$, through
$g_\alpha(z)=\Ea(-z)$. A one-dimensional log-spaced table of $M$ values of
$g_\alpha$, reconstructed by $p$-th order interpolation, has uniform defect
$\varepsilon_{\rm tab}\le C M^{-p}$ with \emph{no} dependence on the evaluation
grid $N$ (\cref{prop:tab}). The base multiplier and the reference solver use the
same five-point symbol, so the resulting $O(h^2)$ eigenvalue-consistency error is
identical in the label and the base and cancels in the residual. It survives only
at the reference-solver level, see Section S1 of the supplementary material. One
cached table therefore
serves $17^2,65^2,129^2,257^2$ with the same defect. This is the property that
allows modes absent from the training grid to be filled at query time. It
contrasts with an FNO, whose spectral weights are stored by discrete mode index
on the training grid, address a different physical eigenvalue on another grid,
and vanish identically beyond $K$ modes.

\emph{(ii) The residual has the same Sobolev order as the full field.}
The elliptic estimate of the frozen semigroup,
$\|E_0(t)f\|_{H^{\gamma+2s}}\le C(1+(ct^\alpha)^{-1})\|f\|_{H^\gamma}$, supplies
$2s$ derivatives whenever $E_0$ acts directly on the data. This gain is
\emph{cancelled} in the scattering term because the perturbation
$A_1=-\mathcal M_{\delta\nu}\Lap^{2s}$ itself carries the factor $\Lap^{2s}$.
Along the integrand, $u_0\in H^\gamma\to E_0u_0\in H^{\gamma+2s}\to
\Lap^{2s}E_0u_0\in H^\gamma$, and $\Lap^{2s}E_0(t)$ is order zero on the high
modes (its symbol $\lambda^s\Ea(-ct^\alpha\lambda^s)\le C_\alpha/(ct^\alpha)$ is
independent of $\lambda$). Extracting the gain from the outer semigroup instead
produces the non-integrable weight $\rho^{-1}$. Hence, up to an arbitrarily
small endpoint loss, $r=u-u_{\rm base}\in H^\gamma$, the same order as $u$, and
\begin{equation}\label{eq:k2sratio-main}
\sigma_K(r)\le C_T(\eta+\beta T^\alpha M_1)\,K^{-\gamma}\|u_0\|_{H^\gamma},
\qquad
\frac{\sigma_K(r)}{\sigma_K(u)}\le C(\eta,\beta,T),
\end{equation}
a constant factor, independent of $K$ (\cref{prop:reg}). It is strictly below one
for weak perturbations and short horizons, but the ratio is a constant independent
of $K$. A genuine $2s$-order gain appears only for a perturbation strictly lower
order than the principal part (or for $\delta\nu=0,\beta=0$, where $r\equiv0$).
What the injection removes is therefore the dominant \emph{amplitude} of the
linear evolution, including its algebraic memory, not $2s$ derivatives of the
residual. For orientation, the propagator $\Ea(-ct^\alpha\lambda^s)$
transitions from near-unity to strong decay at $ct^\alpha\lambda^s\sim1$,
i.e.\ $\lambda\sim|\boldsymbol k|^2$, giving a dissipation cutoff
$k_c(t)\asymp(ct^\alpha)^{-1/(2s)}$. This is the wavenumber at which the modal multiplier
$\Ea(-ct^\alpha k^{2s})$ crosses from order unity to small, with $\asymp$ meaning
equality up to dimensionless constants. Modes beyond this cutoff are damped by the injected
propagator. This describes the base field, not a tail-rate result for $r$.

\emph{(iii) Band filling sets the zero-shot floor.}
Let $N_0$ denote the training resolution (interior points per axis), $K$ the
retained modes, and $N\ge N_0$ the query resolution. We split the spectrum into the
retained band $|\boldsymbol k|<K$ and the out-of-band set $K\le|\boldsymbol
k|\le N$, with tail energy $\sigma_{K,N}$. The injected base is a diagonal,
alias-free DST multiplier on \emph{every} mode (\cref{lem:alias,prop:tab}),
whereas the FNO spectral convolution is zero on $|\boldsymbol k|\ge K$. Parseval
then decomposes the squared error band by band (\cref{prop:floor}).
\begin{equation}\label{eq:floor-main}
\begin{aligned}
\varepsilon_{\rm Fr}^2(N)
&=\|\mathcal P_K(\Rres-r)\|^2+\sigma_{K,N}^2(r)
+\varepsilon_{\rm tab}^2\|u_0\|^2 ,\\
\varepsilon_{\rm FNO}^2(N)
&=\|\mathcal P_K(\widetilde{\Rres}-u)\|^2+\sigma_{K,N}^2(u)
+\varepsilon_{\rm pw}^2 .
\end{aligned}
\end{equation}
The first term on each line is the retained-band ($|\boldsymbol k|<K$)
learning error. The second is the out-of-band tail. On the FrFNO line the table
term $\varepsilon_{\rm tab}^2\|u_0\|^2$ is the upper bound from \cref{prop:tab}.
The cross term between the table error and the residual is $O(\varepsilon_{\rm tab})$
by Cauchy--Schwarz and does not affect the $N$-independent comparison (see
Section S2 of the supplementary material).
The decisive term is the out-of-band one. For $N>N_0$ the modes
$N_0\le|\boldsymbol k|\le N$ were absent at training time. The FNO predicts zero
there and pays the full energy $\sigma_{K,N}(u)$, whereas FrFNO supplies the
tabulated propagator on all of them and pays only the residual tail
$\sigma_{K,N}(r)$ plus the $N$-independent table defect. With $K$ fixed and
$N\to\infty$, $\sigma_{K,N}\to\sigma_K$ while the table defect and the learning
terms stay constant, so both errors converge to $N$-independent floors,
$\varepsilon_\infty^{\rm FrFNO}=\varepsilon_{\rm low}^{\rm Fr}+\sigma_K(r)+
\varepsilon_{\rm tab}\|u_0\|$ and
$\varepsilon_\infty^{\rm FNO}=\varepsilon_{\rm low}^{\rm FNO}+\sigma_K(u)+
\varepsilon_{\rm pw}$. The two tails have the same Sobolev order but
$\sigma_K(r)<\sigma_K(u)$ in the weakly perturbed regime (\cref{prop:reg}). The
realisable bandwidth is $\min(K,N_0)$ (\cref{cor:plateau}), so zero-shot
super-resolution means non-degradation under mesh refinement, not convergence to
zero error. Crucially, \cref{prop:reg} shows the residual is \emph{not} smoother
than the full field. It carries the same Sobolev order. The advantage therefore
comes not from a steeper residual tail but from the table filling the out-of-band
modes, so the tail energy paid is $\sigma_K(r)$ (smaller amplitude) rather than
$\sigma_K(u)$ (full amplitude). This is band filling, not residual smoothing.

\subsection{End-to-end bound and the comparison with an FNO}\label{sec:th-total}

\begin{theorem}[End-to-end FrFNO error]\label{thm:total}
Under (H1)--(H3) and \cref{prop:prop,prop:tab,prop:floor}, for a residual
network of width $w$, depth $L$, $K$ modes trained on $n_{\rm tr}$ independent samples, in
expectation,
\begin{equation}\label{eq:total}
\mathbb E\|u(T)-\mathcal G_N(u_0)\|_{L_2}
\le\varepsilon_{\rm prop}+\varepsilon_K
+\varepsilon_{\rm approx}+\varepsilon_{\rm gen}+\varepsilon_{\rm alias},
\end{equation}
where $\varepsilon_{\rm prop}$ is bounded by \cref{eq:eprop-main} and the table
defect by \cref{prop:tab}. $\varepsilon_K=\sigma_K(r)\lesssim K^{-\gamma}$ is the
residual truncation tail (the same Sobolev order as the FNO tail $K^{-\gamma}$).
$\varepsilon_{\rm gen}=O(n_{\rm tr}^{-\beta_{\rm gen}})$ for a learning exponent
$\beta_{\rm gen}>0$. $\varepsilon_{\rm alias}$ is zero for the injected
propagator and bounded by $\sigma_K(r)$ for the residual network. At matched
$(w,L,K,n_{\rm tr},N)$, the band-by-band comparison is
\begin{equation}\label{eq:compare}
\begin{aligned}
\varepsilon_\infty^{\rm FrFNO}
&\le \varepsilon_{\rm prop}
+\underbrace{\sigma_K(r)}_{\substack{\text{out-of-band tail,}\\ \text{filled by the table}}}
+\varepsilon_{\rm low}^{\rm Fr},\\
\varepsilon_\infty^{\rm FNO}
&=\underbrace{\sigma_K(u)}_{\substack{\text{out-of-band tail,}\\ \text{set to zero}}}
+\varepsilon_{\rm low}^{\rm FNO},\\[-1pt]
&\hspace{2.2em}\frac{\sigma_K(r)}{\sigma_K(u)}\le C(\eta,\beta,T)<1 .
\end{aligned}
\end{equation}
The rigorous, resolution-independent advantage is on the out-of-band term. Under
super-resolution the FNO spectral convolution vanishes on every mode beyond the
training band, while the tabulated propagator supplies it with an $N$-independent
defect $\varepsilon_{\rm tab}$, replacing $\sigma_K(u)$ by the smaller
$\sigma_K(r)$. The retained-band terms $\varepsilon_{\rm low}$ are ordinary
approximation and generalization errors of the matched backbones. The FrFNO target
has smaller amplitude and need not learn the Mittag--Leffler multiplier, but this
comparison is reported numerically rather than asserted as a theorem. When
$\varepsilon_{\rm tab}$ and $\varepsilon_{\rm prop}$ are small and the
out-of-band energy is appreciable (the super-resolution regime of the numerics),
the FrFNO floor lies below the FNO floor.
Quantitative learning bounds follow from the standard neural operator
approximation theory~\cite{kovachki2021ua,deryck2022generic}.
\end{theorem}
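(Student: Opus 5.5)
The plan is to prove the bound by a telescoping triangle inequality around the decomposition \cref{eq:surrogate}, and then to read off the comparison \cref{eq:compare} from the Parseval split of \cref{prop:floor}. Write the discrete prediction as $\mathcal G_N(u_0)=\widetilde u_{\rm base}+\Rres$, where $\widetilde u_{\rm base}$ is the tabulated base field on the $N$-grid. Introduce the intermediate objects $u_{\rm base}=E_0(T)u_0$ (the continuum frozen propagator) and $r=u(T)-u_{\rm base}$. Then
\[
u(T)-\mathcal G_N(u_0)=(u_{\rm base}-\widetilde u_{\rm base})+(\mathcal P_K r-\Rres)+(I-\mathcal P_K)r .
\]
Each of the three pieces is handled by a result already in hand.

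The first piece splits further into the $L_1$/five-point discretisation defect of the propagator and the interpolation defect $\varepsilon_{\rm tab}\|u_0\|$. I bound it by \cref{prop:prop}, giving the $C_\tau\tau^{2-\alpha}+C_h h^2$ part of \cref{eq:eprop-main}, and by \cref{prop:tab}, giving an $N$-independent defect. I will absorb it, together with the Duhamel terms of \cref{eq:eprop-main}, into $\varepsilon_{\rm prop}$.

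The third piece is orthogonal to the retained band. By Parseval it equals $\sigma_{K,N}(r)\le\sigma_K(r)$, and \cref{prop:reg} controls it as $\lesssim K^{-\gamma}$; this is $\varepsilon_K$.

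The middle piece is where the learning enters. I will split it as $\mathcal P_K r-\Rres^{*}$ plus $\Rres^{*}-\Rres$, where $\Rres^{*}$ is the best-in-class network of width $w$, depth $L$ and $K$ modes. The first part gives $\varepsilon_{\rm approx}$ by the universal approximation bounds of \cite{kovachki2021ua}. The second, after taking expectation over the $n_{\rm tr}$ samples, gives $\varepsilon_{\rm gen}=O(n_{\rm tr}^{-\beta_{\rm gen}})$ via \cite{deryck2022generic}. On top of this sits the evaluation of the trained network on the $N$-grid rather than the $N_0$-grid. Here the pointwise $1\times1$ path and the nonlinearities can fold high modes back into the band, and this defines $\varepsilon_{\rm alias}$. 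For the base field it vanishes by \cref{lem:alias}, since that field is an exact diagonal multiplier on every DST mode. For the residual network, aliased energy can only come from modes beyond $K$ of the input of the residual target, so I would bound it by $\sigma_K(r)$ using the Lipschitz constant of the backbone. Summing the pieces and taking expectation gives \cref{eq:total}.

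For \cref{eq:compare}, let $N\to\infty$ with $K$ fixed in the exact identity \cref{eq:floor-main}. The learning terms and table defect are $N$-independent, and $\sigma_{K,N}\to\sigma_K$ monotonically. This gives the two floors, with the FrFNO line an inequality because I replace the table and cross terms by their upper bound inside $\varepsilon_{\rm prop}$. The FNO line is an equality since its out-of-band prediction is identically zero. The ratio bound is exactly \cref{eq:k2sratio-main} from \cref{prop:reg}, with $C(\eta,\beta,T)<1$ in the weak-perturbation, short-horizon regime, which I state as a hypothesis on $\eta,\beta T^\alpha M_1$.

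The main obstacle is the alias term for the residual network. Bounding the high-mode leakage through LeakyReLU and the $1\times1$ path by $\sigma_K(r)$, and not by $\sigma_K(u)$, needs the network's input high modes to enter only through $r$-like quantities. This is not literally true, since $u_0$ and $u_{\rm base}$ are inputs. My first attempt would use a Lipschitz bound of the network in the out-of-band input component, times the out-of-band energy, and then argue that the trained network depends on those components only through the target $r$. Failing that, I would state $\varepsilon_{\rm alias}\le C_{\rm Lip}\sigma_K(\cdot)$ as an assumption.
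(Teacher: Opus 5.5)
Your proposal is essentially the paper's own proof. The paper splits $\|u-\mathcal G_N\|$ by the same triangle inequalities into the base/table defect, the tail $\sigma_K(r)$, and an in-band term collecting $\varepsilon_{\rm approx}$, $\varepsilon_{\rm gen}$ and $\varepsilon_{\rm alias}$, and it obtains \cref{eq:compare} from \cref{prop:floor} by letting $N\to\infty$, just as you do (it also adds an unneeded term $\|u-u_{\rm base}\|$, which is how the Duhamel part of $\varepsilon_{\rm prop}$ enters, whereas your three-piece identity is exact and slightly sharper). The alias obstacle you flag is not resolved in the paper either: it only cites the alias-free remark, which concerns the diagonal base multiplier, and a Lipschitz argument would charge the out-of-band content of the inputs $u_0,u_{\rm base}$ rather than $\sigma_K(r)$, so stating it as an assumption is the honest version; by the same token, the shared $1\times1$ path means the FNO's out-of-band prediction is not identically zero, so the FNO ``equality'' tacitly drops the $\varepsilon_{\rm pw}$ term kept in \cref{prop:floor}.
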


\subsection{Scope and testable predictions}\label{sec:th-scope}

The analysis delimits the method in four respects.
(i)~short horizon, since the Duhamel--Dyson expansion requires
$\beta T^\alpha M_1\ll1$.
(ii)~Sobolev admissibility $s>1/2$ for the advective bilinear form to be bounded.
(iii)~smooth solutions $u_0\in H^\gamma$ with $\gamma\ge2s$ (across a discontinuity
the constant-factor reduction no longer holds).
(iv)~bandwidth bottleneck $\min(K,N_0)$.

The theory makes three quantitative predictions tested in
\cref{sec:theoryverify}.
(a)~the out-of-band gap widens with the super-resolution factor $N/N_0$
(\cref{prop:floor}).
(b)~$\varepsilon_{\rm prop}$ is first order in $\eta$, $\beta T^\alpha M_1$, and
$T^\alpha$ (\cref{prop:prop}).
(c)~the residual is dominated by the first Born term, consistent with the
single-correction architecture.
The $s$-dependence of the advantage and the $N$-independent plateau are
reported numerically in \cref{sec:theoryverify}.

\section{Numerical experiments}\label{sec:numerics}

\subsection{Experimental setup}\label{sec:exp-setup}

Unless stated otherwise we solve \cref{eq:pde1} on $\Omega=(0,1)^2$ with
homogeneous Dirichlet boundary conditions. The main nonlinear test (\cref{sec:b1}) uses
$T=0.015$, $\beta=3$, a smooth weakly nonlinear regime where the
residual-regularity bound applies. The long-horizon test (\cref{sec:longT}) quadruples
$T$ to $0.06$. At $(\alpha,s)=(0.75,0.75)$ the lowest mode $(1,1)$ retains $0.663$ of its initial
amplitude at $T=0.015$ while the $(8,8)$ mode loses $97\%$ of its initial
amplitude, so the damping is substantial despite the short dimensional time
(the relevant fractional time scale is $T^\alpha$, not $T$). The horizon is
deliberately short to stay in the weakly nonlinear regime where the
residual-regularity bound applies. Larger $T$ strengthens nonlinear
steepening, and in the supercritical regime $s<1/2$ can produce finite-time
gradient blowup~\cite{kiselev2008blowup,alibaud2007occurrence}, consistent with
the Sobolev-admissibility condition \cref{sec:th-scope}(ii) that $s>1/2$ is
required for the advective bilinear form to be bounded. In all cases
it degrades the $H^\gamma$ regularity ($\gamma\ge 2s$) on which the bound relies.

The reference solver is the IMEX/upwind scheme \cref{eq:imex} (upwinding is
mandatory for stability at small $s$), with resolution--step pairs
$(N,N_t)=(16,400),\allowbreak(64,800),\allowbreak(128,1600)$ (\cref{app:ref}). Training uses $128$
multiscale random fields paired with a $9\times9$ order grid ($10\,368$ pairs at
$17^2$), $8000$ iterations, batch $64$, Adam$(1.5\times10^{-3},w_d=10^{-5})$ cosine
decayed to $5\times10^{-5}$ (\cref{app:train}). The held-out set has $48$ independent
samples with continuously drawn orders off the training grid (\cref{app:data}).
Reported errors are means of \cref{eq:metric} at $17^2,65^2,129^2$, of which only
$17^2$ was seen during training.

All spectral models share the same FiLM/spectral-convolution backbone with
$K=10$ retained Fourier modes per axis and effectively identical capacity ($1.88\times10^6$ parameters for
FrFNO and FNO; $0.38$--$2.1\times10^6$ for the other baselines). FrFNO injects the
analytic base field and learns only the residual. FNO regresses the full solution.
PINO adds a high-resolution PDE residual. PDNO learns a pseudo-differential symbol.
CNO uses alias-controlled convolutions with AdamW/$L_1$. DeepONet uses a
$17$-sensor branch--trunk. U-Net is a fully convolutional baseline.

\subsection{Main nonlinear test }\label{sec:b1}

\Cref{tab:b1} reports the comparison for 2D fractional Burgers equation. FrFNO attains the lowest error at
\emph{all three} resolutions. It reaches $1.72\%$ at the training resolution and
$8.30\%/10.23\%$ under zero-shot super-resolution. The matched FNO degrades much more
sharply ($2.66\%\to23.46\%$), which is exactly the contrast predicted by
\cref{eq:compare}. Both models share the same backbone and retain $K$ modes, but
on the modes beyond the training band the FNO predicts zero and pays the full
out-of-band energy $\sigma_K(u)$, whereas FrFNO fills those modes from the
resolution-independent propagator table and pays only the smaller residual tail
$\sigma_K(r)\le C\,\sigma_K(u)$ with $C<1$ in this weakly nonlinear regime
(\cref{eq:k2sratio-main}). On the retained band FrFNO also avoids relearning the
Mittag--Leffler multiplier. U-Net, lacking any
fixed spectral symbol, is the least resolution robust and reaches $84.4\%$ at $129^2$.
DeepONet, limited by its fixed $17^2$ sensor grid and an MLP trunk without
spectral or convolutional inductive bias, saturates near $44\%$ even at the
training resolution. CNO is
competitive at $17^2$ but its super-resolution degradation is larger than FrFNO's.
PINO's Caputo residual term is a coarse, mis-specified regularizer. Removing it
($\lambda_{\rm res}=0$) returns error to the FNO level, while the analytic
propagator embeds the linear memory exactly with no temporal discretization error.

\begin{table}[htbp]
\centering\small
\caption{Nonlinear fractional Burgers' equation ($T=0.015,\beta=3$): relative $L^2$
test error (\%) on the 48-sample held-out set at the training resolution
$17^2$ and under zero-shot super-resolution to $65^2,129^2$, with training
wall-clock. CNO uses its official training recipe. Best in bold.}
\label{tab:b1}
\begin{tabular}{lcccc}
\toprule
Model & $17^2$ & $65^2$ & $129^2$ & Train (s) \\
\midrule
\textbf{FrFNO (ours)} & $\mathbf{1.720}$ & $\mathbf{8.304}$ & $\mathbf{10.227}$ & 243.2 \\
FNO & 2.662 & 20.221 & 23.463 & 180.4 \\
PINO & 2.710 & 12.883 & 15.057 & 338.7 \\
PDNO & 2.246 & 10.019 & 20.649 & 172.1 \\
CNO & 2.686 & 14.851 & 17.076 & 206.0 \\
DeepONet & 44.102 & 44.242 & 44.515 & 54.8 \\
U-Net & 4.565 & 69.813 & 84.403 & 61.2 \\
\bottomrule
\end{tabular}
\end{table}

\Cref{fig:snapshot} shows held-out field snapshots, an $x$-cross-section, and
two diagnostics beyond relative $L^2$ error. The first is the radial phase error
\begin{equation}\label{eq:phase-err}
\theta(k)=\frac{1}{\#\{\boldsymbol j:|\boldsymbol j|=k\}}
\sum_{|\boldsymbol j|=k}
\left|\arg\!\big(\hat u_{{\rm pred},\boldsymbol j}\,\overline{\hat u_{{\rm ref},\boldsymbol j}}\big)\right|,
\end{equation}
where $\boldsymbol k\in\mathbb Z^2$ is the FFT wavenumber vector and $k=|\boldsymbol k|$
its radial magnitude (on the $129^2$ grid the Nyquist cutoff is $k=64$).
and the normalized cross-correlation
($\mathrm{NCC}\in[-1,1]$, with $1$ denoting identical fields). Phase error is
reported separately because two fields can match in amplitude yet be spatially
misaligned, which is invisible to an $L^2$ norm but degrades convective
accuracy. FrFNO has the smallest phase error ($0.807$ rad, averaged over $k=1\sim 32$) and highest NCC
($0.9954$), versus $1.062$ rad/$0.9837$ for FNO. The analytic propagator is a
real-valued diagonal multiplier in Fourier space, so it scales each mode's
amplitude without shifting its phase. This locks the spectral phase and avoids
the phase misplacement and Gibbs ringing that data-driven operators exhibit at
high modes.

\begin{figure}
\centering
\includegraphics[width=0.80\textwidth]{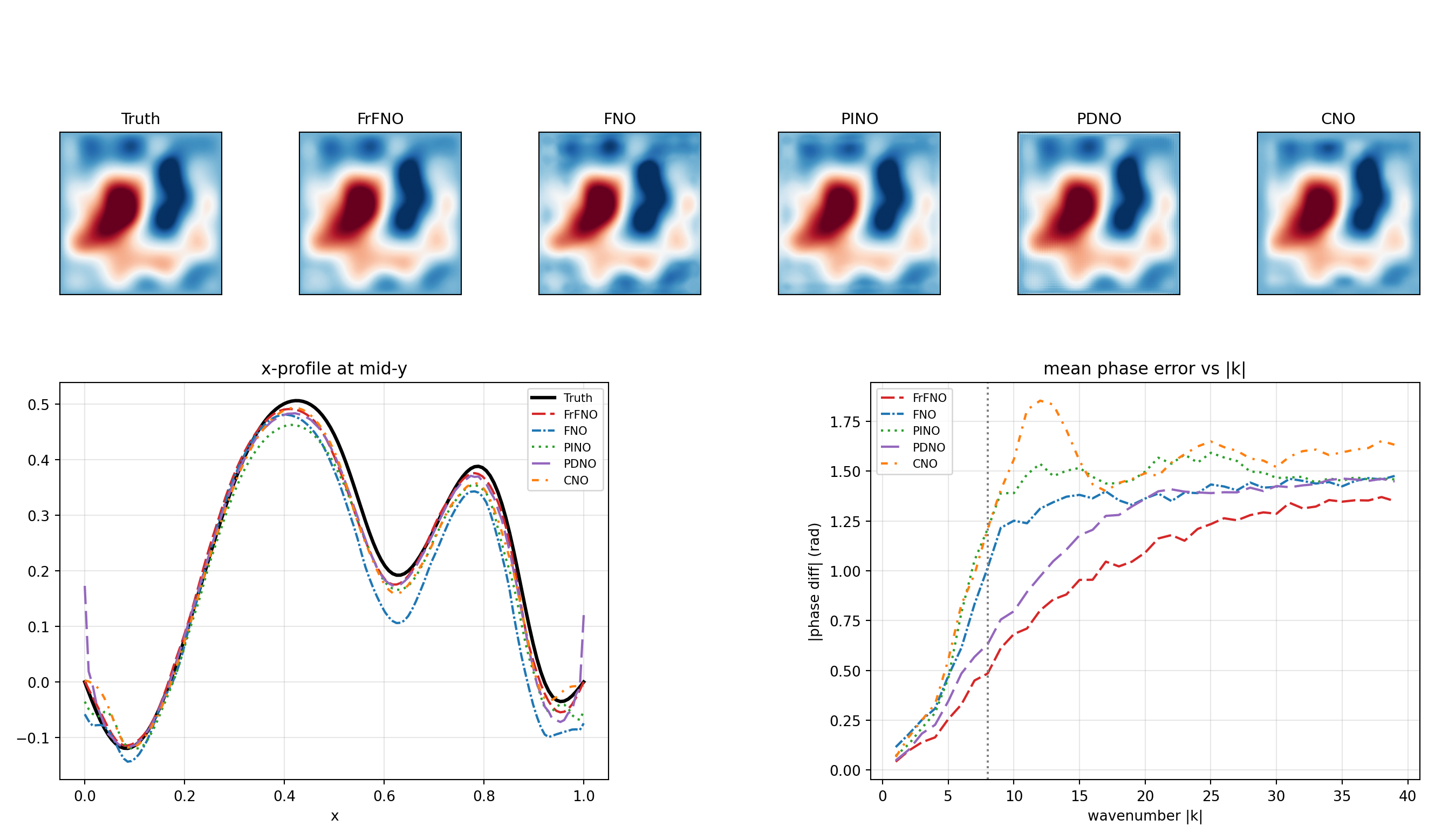}
\caption{Held-out $129^2$ prediction for the nonlinear fractional Burgers' equation: field snapshots (top),
an $x$-cross-section (bottom left), and the radial phase error (bottom right). FrFNO
is visually indistinguishable from the reference and attains the smallest phase error
and highest NCC.}
\label{fig:snapshot}
\end{figure}

\subsection{Long integration window: one-shot, not autoregressive}
\label{sec:longT}

\Cref{tab:b2b} confirms for $T=0.06$ (four times the main horizon). FrFNO
remains the best at every resolution ($2.65/9.16/10.93\%$), while the FNO error at $129^2$
grows to $31.1\%$ and U-Net to $166.6\%$. CNO, trained with its official
AdamW+L1 recipe, remains stable and is in fact the most resolution-robust
baseline after FrFNO ($3.86/13.97/16.07\%$). In a separate experiment we take
each network trained for the single-step horizon $T=0.015$ and roll it out
autoregressively (four steps to reach $T=0.06$, feeding each prediction back as
the next initial condition). Every operator, including FrFNO, degrades to
$87$--$112\%$ error, because rolling out compounds the per-step residual bias
and, for the Caputo memory equation, reintroduces the history accumulation that
the one-shot propagator avoids. The propagator is exact for any $T$,
so time super-resolution is free in the one-shot setting.

\begin{table}[htbp]
\centering\small
\caption{One-shot long-horizon prediction ($T=0.06$): relative $L^2$ test
error (\%) on the held-out set. FrFNO remains the best at all resolutions.}
\label{tab:b2b}
\begin{tabular}{lccc}
\toprule
Model & $17^2$ & $65^2$ & $129^2$ \\
\midrule
\textbf{FrFNO (ours)} & $\mathbf{2.646}$ & $\mathbf{9.164}$ & $\mathbf{10.933}$ \\
FNO & 3.577 & 26.468 & 31.124 \\
PINO & 3.864 & 18.026 & 23.006 \\
PDNO & 3.150 & 12.930 & 27.633 \\
CNO & 3.860 & 13.965 & 16.066 \\
DeepONet & 30.366 & 30.939 & 31.410 \\
U-Net & 5.850 & 129.487 & 166.589 \\
\bottomrule
\end{tabular}
\end{table}

\subsection{Space--time variant and the PINO residual-weight sweep}
\label{sec:b3}

The main Burgers test treats the fractional time derivative as a conditioning
input and applies one-shot prediction. To further test whether the
analytic-propagator advantage survives a tighter physics-loss coupling, we
consider a space--time variant in which PINO additionally penalises a
collocation residual on the Caputo time derivative. \Cref{tab:b3} compares
FrFNO, the matched FNO, and PINO with two residual weights $\lambda_{\rm res}$
on the same data. At the training resolution $17^2$ all three are comparable
($3.4$--$4.1\%$), but under zero-shot super-resolution FrFNO remains at
$8.1$--$9.6\%$ while FNO degrades to $21$--$24\%$ and PINO to $25$--$38\%$.

The PINO sweep itself is informative. Setting $\lambda_{\rm res}=0$ recovers
the data-driven PINO, which at $17^2$ is the best of the three ($3.36\%$).
Turning the space--time Caputo residual on ($\lambda_{\rm res}=0.5$) makes
PINO substantially worse at every resolution ($22.7\%$ at $17^2$, $38.1\%$ at
$129^2$). The collocation residual is evaluated on the coarse $17^2$ mesh,
where the fractional Laplacian and the Caputo history are themselves
under-resolved, so the physics term acts as a mis-specified regulariser that
forces the network toward the wrong smoothness. The analytic propagator avoids
this trap entirely. It is exact on the linear part and leaves no residual for a
coarse collocation to mis-measure. This isolates the FNO/PINO difficulty as
the algebraic, order-dependent linear response itself, consistent with the
linear-diffusion row of \cref{tab:breadth}.

\begin{table}[htbp]
\centering\small
\caption{Space--time variant: relative $L^2$ test error (\%) under the
matched backbone, with the PINO Caputo-residual weight $\lambda_{\rm res}$
swept. At $17^2$ the three are comparable; under zero-shot super-resolution
FrFNO stays flat while FNO and PINO degrade sharply, and the coarse
space--time collocation residual hurts PINO rather than helping it.}
\label{tab:b3}
\begin{tabular}{lccc}
\toprule
Model & $17^2$ & $65^2$ & $129^2$ \\
\midrule
\textbf{FrFNO (ours)} & $\mathbf{4.03}$ & $\mathbf{8.07}$ & $\mathbf{9.47}$ \\
FNO & $3.56$ & $21.05$ & $23.52$ \\
PINO, $\lambda_{\rm res}=0$ & $3.36$ & $25.19$ & $28.62$ \\
PINO, $\lambda_{\rm res}=0.5$ & $22.73$ & $30.31$ & $38.07$ \\
\bottomrule
\end{tabular}
\end{table}

\subsection{Integer-order limit }\label{sec:integer}

\Cref{cor:int} predicts the largest advantage at $\alpha=s=1$. Expanding the
training grid to $\alpha\in[0.6,1]$, $s\in[0.4,1]$, FrFNO achieves
$3.53/6.40/7.56\%$ at the exact integer point (smallest super-resolution degradation,
$2.1\times$), versus $48.6\%$ for FNO at $129^2$. The same ordering holds on a
near-integer band $U[0.9,1]$ (\cref{tab:b4}). FrFNO thus specializes correctly to the classical
integer-order limit.

\begin{table}[htbp]
\centering\small
\caption{Integer-order generalization: relative $L^2$ test error (\%) on the
held-out set. Top block: exact $(\alpha,s)=(1,1)$;
bottom block: near-integer $U[0.9,1]$.}
\label{tab:b4}
\begin{tabular}{lccc|ccc}
\toprule
& \multicolumn{3}{c}{$(1,1)$} & \multicolumn{3}{c}{near-integer $U[0.9,1]$}\\
\cmidrule(lr){2-4}\cmidrule(lr){5-7}
Model & $17^2$ & $65^2$ & $129^2$ & $17^2$ & $65^2$ & $129^2$\\
\midrule
\textbf{FrFNO} & $\mathbf{3.502}$ & $\mathbf{6.318}$ & $\mathbf{7.352}$
 & $\mathbf{2.875}$ & $\mathbf{5.779}$ & $\mathbf{6.784}$\\
FNO & 6.596 & 41.518 & 48.595 & 5.620 & 40.673 & 47.458\\
PINO & 5.648 & 20.153 & 27.107 & 4.972 & 19.312 & 25.891\\
PDNO & 5.132 & 10.593 & 11.787 & 4.331 & 9.633 & 10.927\\
CNO & 5.860 & 11.689 & 12.919 & 4.503 & 10.651 & 11.910\\
DeepONet & 16.026 & 17.660 & 18.335 & 17.906 & 18.601 & 19.516\\
U-Net & 12.761 & 177.441 & 219.517 & 10.660 & 175.276 & 216.441\\
\bottomrule
\end{tabular}
\end{table}

\subsection{Computational cost and break-even analysis}\label{sec:cost}

We benchmark on a single RTX~3090 (float32) using the nonlinear problem of \cref{sec:b1} at four
resolutions, with $N_t$ scaled as $400,800,1600,3200$.

\paragraph{Per-query cost.}
\Cref{tab:cost-perquery} reports the median wall-clock on a single RTX~3090.
The columns are as follows. (i)~$B=1$, the cost of solving one field at a time.
(ii)~$B=64$ amortized, the cost of processing a batch of $64$ fields divided by
$64$. When multiple fields are stacked into one tensor, the GPU executes the
same kernel on all of them simultaneously, so the per-field cost drops below the
$B=1$ number. The ``Linear solver'' column solves the
homogeneous linear fractional diffusion (no advection). The ``Nonlinear solver''
includes the variable-coefficient perturbation and the $u\partial_x u$
nonlinearity, and is the reference used to generate training labels.

The nonlinear direct solver grows from $1.73$\,s at
$17^2$ to $13.9$\,s at $129^2$, driven by the $O(N_t^2)$ history convolution.
Even in bulk ($B=64$ amortized), the nonlinear solver costs $115.6$\,ms at
$65^2$ and $1844.7$\,ms at $129^2$. FrFNO
inference (five analytic base fields via the cached table plus one residual-network
forward pass) stays at $\approx13$\,ms across all resolutions, independent of $N_t$.
The per-query speedup grows from $129\times$ at $17^2$ to $999\times$ at $129^2$.

\begin{table}[htbp]
\centering\footnotesize
\caption{Per-query wall-clock (median, GPU). ``$B=64$ amortized'' divides the batch time by
$64$ (by $8$ at $129^2$ for memory).}
\label{tab:cost-perquery}
\begin{tabular}{cc|cc|cc|c}
\toprule
& & \multicolumn{2}{c|}{Linear solver} & \multicolumn{2}{c|}{Nonlinear solver} & FrFNO\\
\cmidrule(lr){3-4}\cmidrule(lr){5-6}
Res. & $N_t$ & $B=1$ & $B=64$ amortized & $B=1$ & $B=64$ amortized & $B=1$\\
\midrule
$17^2$  & 400  & 0.56\,s & 8.7\,ms  & 1.73\,s  & 28.8\,ms  & 13.4\,ms\\
$65^2$  & 1600 & 2.21\,s & 42.5\,ms & 6.98\,s  & 115.6\,ms & 12.0\,ms\\
$129^2$ & 3200 & 4.33\,s & 679.2\,ms& 13.92\,s & 1844.7\,ms& 13.9\,ms\\
\bottomrule
\end{tabular}
\end{table}

\Cref{fig:speedup} visualises the per-query and total cost comparison.

\begin{figure}
\centering
\includegraphics[width=0.80\textwidth]{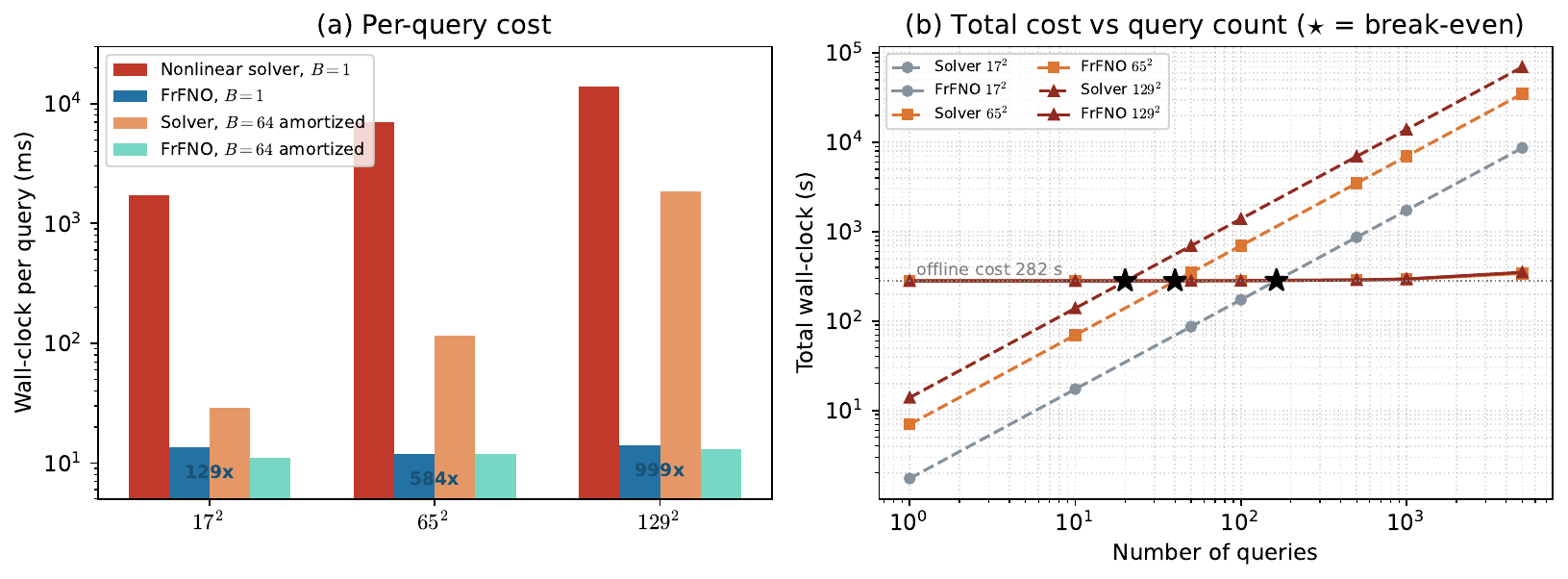}
\caption{(a) Per-query wall-clock on a log scale. FrFNO (blue/cyan) is nearly flat at
$\approx 13$\,ms across resolutions, while the nonlinear direct solver (red/orange) grows
with $N_t^2$. Numbers above the blue bars are the per-query speedup. (b) Total
wall-clock versus number of queries (log--log). Stars mark the break-even point for each
resolution. The dotted horizontal line is the one-time offline cost ($282$\,s).}
\label{fig:speedup}
\end{figure}

\paragraph{Offline cost and break-even.} The one-time offline cost is $281.6$\,s
($22.0$\,s data generation plus $259.6$\,s training). The propagator table ($\approx5$\,s)
is built once and serves every resolution. The break-even query count is the
number of solves at which the offline cost is recovered by the per-query savings,
$N_{\rm be}=281.6/(t_{\rm solver}-t_{\rm FrFNO})$. It reaches after $20$ queries
at $129^2$, $40$ at $65^2$, and $164$ at $17^2$. The higher the resolution,
the faster the break-even, because the direct solver cost grows with $N_t^2$
while FrFNO cost is flat.

\paragraph{Total cost.} At $129^2$, $1000$ queries cost $296$\,s for FrFNO versus
$13920$\,s for the solver ($47\times$), and $5000$ queries $351$\,s versus $69601$\,s
($198\times$, $19.2$\,hours saved), the typical regime of uncertainty
quantification, parameter sweeps, and inverse optimization.

\subsection{Direct verification of the theory}\label{sec:theoryverify}

\Cref{fig:band-fill} illustrates the central mechanism. Trained at $17^2$ with
$K=10$ retained modes, under zero-shot super-resolution to $129^2$, the radially
averaged phase error \eqref{eq:phase-err} of FrFNO is consistently smaller than
FNO across the entire wavenumber range, with the gap widening in the out-of-band
region $k>K$. The reason is band filling. On modes that the spectral
convolution cannot represent, FNO relies on the $1\times1$ skip path which
produces phase-distorted high-wavenumber content, while FrFNO supplies the exact
analytic propagator from the cached table on every mode.

\begin{figure}
\centering
\includegraphics[width=0.9\textwidth]{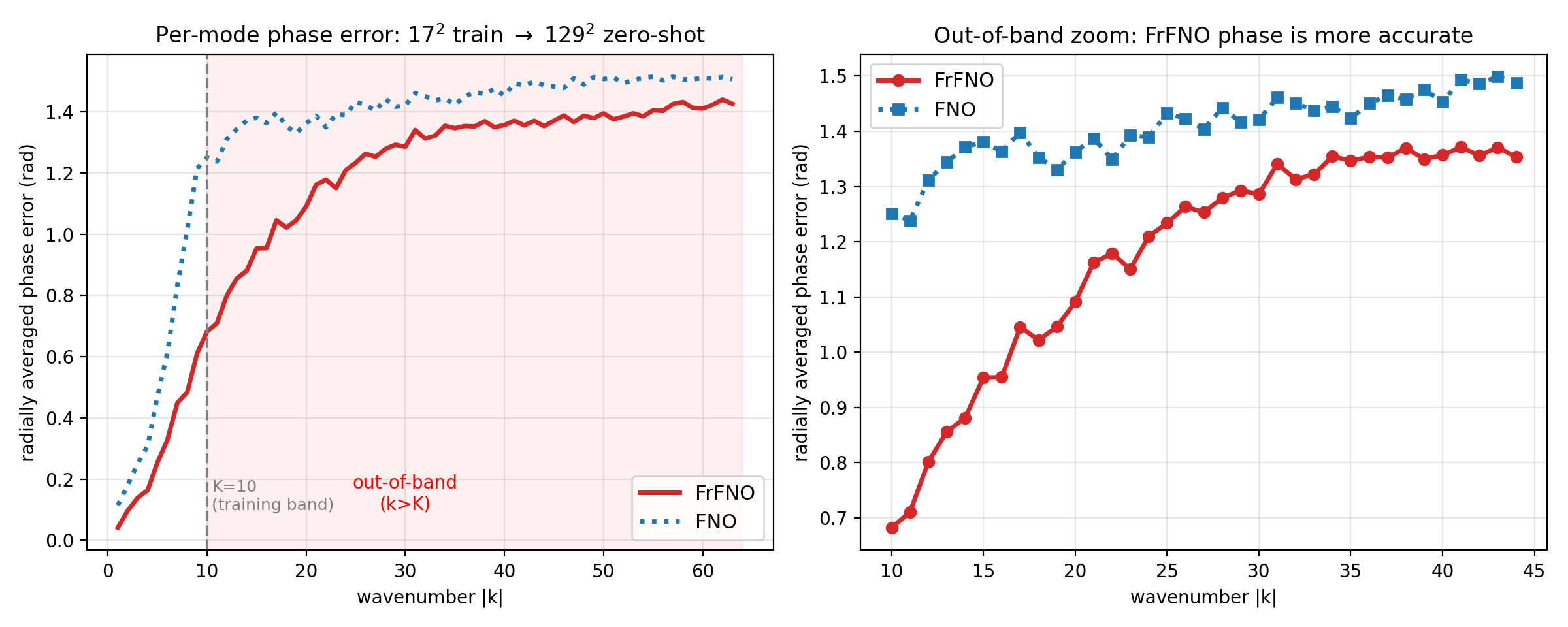}
\caption{Per-mode phase error (B1, trained at $17^2$, evaluated at $129^2$).
Left: radially averaged absolute phase error versus wavenumber. The vertical
dotted line marks $K=10$, the training-band cutoff. Right: zoom on the out-of-band
region $K<k<45$. FrFNO's phase error is consistently smaller, because it fills
out-of-band modes with the exact analytic propagator rather than relying on the
$1\times1$ skip path.}
\label{fig:band-fill}
\end{figure}

We run five tests on a common set of $24$ fields of the predictions in
\cref{sec:th-scope}. Exp.~1 and Exp.~5 share one frozen high-resolution truth
($513^2$, solved at $N_t=6400$, locally averaged to each $17/65/129/257^2$ evaluation grid). The full prediction/observation table, per-test
discussion, and complete Exp.~1 and Exp.~5 tables are in Section S3 of the supplementary material (Table S2). In brief,
all five agree with the theory. Past $129^2$ both networks plateau, yet the FrFNO
floor ($10.8\%$) is below half the FNO floor ($22.9\%$, Exp.~1). The advantage
ratio grows $1.38\to7.69$ as $s:0.4\to0.8$ (Exp.~2). The propagator error is first
order in $\eta,T^\alpha,\beta$ with measured slopes $1.016,0.828,0.931$ (Exp.~3).
A single Born term accounts for $92.5$--$96.2\%$ of the residual (Exp.~4). The
floor is set by the bottleneck $\min(K,N_0)$. An $N\to\infty$
convergence curve confirms a plateau rather than convergence to zero. A residual
spectral-tail measurement confirms the same Sobolev order as the full field
(the ratio saturates near one for $K\ge12$, Exp.~5b). A dedicated
$K$-as-bottleneck scan at $N_0=65$ shows the floor decreasing with $K$ before
saturating (Exp.~5), with the analytic-propagator advantage preserved on every axis.
The mechanism is band filling. On modes $k>K$ that the spectral convolution cannot
represent, the FNO predicts zero while FrFNO supplies the exact propagator from the
cached table, so the out-of-band error is $\sigma_K(u)$ for FNO but the smaller
$\sigma_K(r)$ for FrFNO.

\subsection{Ablation and robustness checks}
\label{sec:rigor}

To ensure the main results are not artifacts of a favorable seed or budget, we run
a unified suite at matched backbone ($1.88\times10^6$ parameters), matched data, matched
shuffling, a common $3000$-step budget, and $24$ test fields; absolute values are
slightly above the $8000$-step main table, but relative conclusions are unchanged.

\paragraph{Single-knockout ablation.} Removing the analytic base is by far the dominant
change ($9.49\%\to22.53\%$ at $T=0.015$ and $22.69\%\to34.99\%$ at $T=0.06$). The
secondary components (DynFrac2d features, neighbor-order bases) are near-neutral on the
short horizon but matter at the four-times-longer
horizon, where each costs $2.6$--$3.7$ points at $129^2$. The neighbor fields leave
training-resolution error unchanged but degrade zero-shot super-resolution, so they
carry cross-resolution order context and are retained. Full ablation tables are in the
supplementary material (Section S3).

\paragraph{Robustness.} Across three seeds FrFNO reaches $9.49/9.70/9.84\%$ at
$129^2$ (worst $9.84\%$), while the best FNO seed is at least $21.7\%$. The worst
FrFNO seed still beats the best FNO seed by a wide margin. Error decreases with
training fields per order ($11.29\%$ at $16$, $9.49\%$ at $128$), and a compact
$0.70\times10^6$-parameter network ($K=6$) already attains $9.36\%$, on par with the
$1.88\times10^6$-parameter model, because the exact propagator carries the
modewise evolution and the residual network only represents a small-amplitude
correction inside the retained band, while the out-of-band modes are supplied by
the table rather than learned (\cref{prop:floor}). The
order grid $3\times3/5\times5/9\times9$ gives $10.26/9.89/9.49\%$ (saturated by
$9\times9$). Learning rates $10^{-3}/3\times10^{-3}$ move the result by less than one
point. The method is thus insensitive to secondary hyperparameters once the
propagator is present. Full sweep tables are in the supplementary material (Section S3).

\subsection{Equation-class generality}\label{sec:breadth}

To verify that the advantage is not specific to the Burgers advective structure, we
apply the identical construction, with no change to the injected propagator, to eight
further equation settings, all sharing the same fractional dissipative principal part
$-\nu(x)(-\Delta)^su$ and differing only in the zero-order or system structure. FrFNO
outperforms the matched FNO in every setting. The advantage ratio ranges from
$1.8\times$ (periodic Riesz Burgers) to $14.5\times$ (linear fractional diffusion),
with zero-shot drift below $+1.2$ points in all cases. The construction thus transfers
unchanged to periodic diffusion, Fisher--KPP and Allen--Cahn reaction--diffusion,
two-component and non-symmetric coupled systems, and 2D/3D incompressible fractional
Navier--Stokes. Full equation definitions, boundary conditions, and parameter settings for all eight settings are collected in Section~S3.5 of the supplementary material. \Cref{tab:breadth} summarizes the results.

\begin{table}[H]
\centering\footnotesize
\caption{Equation-class generality. Relative $L_2$ error (\%) of FrFNO versus matched
FNO at the highest zero-shot resolution, with the advantage ratio and the zero-shot
drift (train $\to$ highest resolution). All use identical backbone and budget.}
\label{tab:breadth}
\setlength\tabcolsep{3pt}
\begin{tabular}{p{0.30\textwidth}lcccc}
\toprule
Equation class & Domain & FrFNO & FNO & Ratio & ZS drift \\
\midrule
Periodic Riesz Burgers & $\mathbb T^2$, $64^2$ & $3.55$ & $6.40$ & $1.80\times$ & $+0.54$ \\
Linear fractional diffusion & $(0,1)^2$, $64^2$ & $1.82$ & $26.49$ & $14.5\times$ & $+0.11$ \\
Fisher--KPP & $(0,1)^2$, $64^2$ & $4.08$ & $26.37$ & $6.46\times$ & $+0.93$ \\
Allen--Cahn & $(0,1)^2$, $64^2$ & $3.80$ & $26.61$ & $7.00\times$ & $+0.58$ \\
Two-component vector Burgers & $(0,1)^2$, $64^2$ & $8.54$ & $27.92$ & $3.27\times$ & $+0.05$ \\
Non-symmetric coupled system & $(0,1)^2$, $64^2$ & $4.51$ & $36.49$ & $8.09\times$ & $-0.02$ \\
Incompressible NS (2D vorticity) & $\mathbb T^2$, $64^2$ & $2.91/1.41$ & $6.74/3.46$ & $2.32/2.45\times$ & $+1.20/+0.63$ \\
3D vector vorticity NS & $\mathbb T^3$, $24^3$ & $7.83$ & $18.01$ & $2.30\times$ & $+1.08$ \\
\bottomrule
\end{tabular}
\end{table}

The most striking result is the linear fractional diffusion case: with no
nonlinearity, the matched FNO still has an error of $26.5\%$ while FrFNO
reaches $1.8\%$, isolating the FNO's difficulty as the algebraic, long-memory,
order-dependent \emph{linear} response itself. The linear/Fisher/Allen--Cahn FNO
rows are nearly identical ($26.4$--$26.6\%$). Discarding the off-diagonal coupling
in the non-symmetric system raises the FrFNO error from $4.51\%$ (full
matrix-valued propagator) to $5.28\%$ (diagonal propagator, a $17\%$ relative
degradation), confirming that the full eigen-rotated matrix propagator is
required. The 2D/3D Navier--Stokes rows show the propagator, dictated by the
componentwise identical linear operator, is blind to spatial dimension while
dimension-specific physics (Biot--Savart, vortex stretching) stays in the
residual. The construction is thus equation-class agnostic in the sense of
\cref{sec:sysmethod}. It applies whenever the frozen linear principal part
is a uniformly dissipative, diagonalizable finite matrix, scalar or coupled and in
any dimension. The boundary is equally clear. A reaction strong enough to nucleate
fronts or an advection strong enough to form shocks lowers the $H^\gamma$ residual
regularity ($\gamma\ge 2s$) and falls outside the present smooth-regime surrogate.

\section{Concluding remarks}\label{sec:conclusion}

We proposed FrFNO, a resolution-robust conditional neural operator for
space--time fractional PDEs that injects the exact Mittag--Leffler propagator as a
parameter-free base field and learns only the residual. The theory shows this change
is structural. A fractional Duhamel--Dyson expansion bounds the propagator error, the tabulated multiplier is
resolution-independent and fills the out-of-band modes that an FNO sets to zero, and the residual shares the Sobolev order of the full field (a constant-factor
smaller tail rather than a steeper decay rate). The alias-free diagonal propagator
makes zero-shot super-resolution converge to a residual-controlled floor. The
band-by-band comparison explains every numerical observation. Under an identical
backbone, FrFNO is the best at all resolutions on nonlinear Burgers, a
four-times-longer horizon, the integer-order limit, and eight further equation
settings. Five tests confirm the plateau, $s$-scaling, propagator scaling, Born
residual, and $\min(K,N_0)$ floor.

The method has following limitations. It targets smooth-regime solutions, so
shock-dominated or front-nucleating problems where the residual loses
$H^\gamma$ regularity are out of scope. The weak-nonlinearity assumption
$\beta T^\alpha M_1\ll1$ means very long horizons should be predicted one-shot
at the target $T$ rather than by autoregressive rollout. For small $s\le1/2$
the advective coupling is weakly damped and requires additional upwinding or
stabilisation beyond the present recipe. Finally, the error floor under mesh
refinement is set by the retained bandwidth $K$ and the training resolution
$N_0$. Refining only the query grid cannot lower it, and a smaller floor
requires enlarging $K$ together with $N_0$.

Future directions include adaptive bandwidth selection driven by residual-tail
estimates, alias-controlled residual blocks on complex geometries via mapped
bases, and shock-dominated compressible flows that require a different linear
principal part than the present fractional diffusion operator.

\section*{Acknowledgments}
This work was financially supported by Jiangsu Provincial Scientific Research Center of Applied Mathematics (BK20233002).

\appendix
\section{Theoretical statements}\label{app:proofs}

This appendix collects the statements of the auxiliary lemmas and propositions
summarized in \cref{sec:theory}. Full proofs with all intermediate inequalities are
in Section S2 of the supplementary material. We work on $\mathbb T^d$ with
$\Lap=(-\Delta)^{1/2}$, $\lambda_{\boldsymbol k}=|\boldsymbol k|^2$, split
$\nu=c+\delta\nu$ with $\eta=\|\delta\nu\|_{L_\infty}/c$, and write
$A_0=-c\Lap^{2s}$, $A_1=-\delta\nu\,\Lap^{2s}$, $\mathcal N(u)=u\partial_xu$,
under (H1) $\nu\in W^{2,\infty}$, (H2) $\alpha\in(0,1],s\in(0,1]$, (H3)
$u_0\in H^\gamma$, $\gamma\ge2s$.

\subsection{Spectral facts}\label{app:ml}

The Mittag--Leffler function $\Ea(z)=\sum_{n\ge0}z^n/\Gamma(1+\alpha n)$ satisfies
for $0<\alpha\le1$, $x\ge0$,
\begin{equation}\label{eq:ML}
\text{(ML1)}\ 0<\Ea(-x)\le1,\quad
\text{(ML2)}\ \Ea(-x)\le\frac{C_\alpha}{1+x},\quad
\text{(ML3)}\ \sup_{x\ge0}x\Ea(-x)\le C_\alpha,
\end{equation}
where $C_\alpha>0$ depends only on $\alpha$~\cite{pollard1948,podlubny1999,kilbas2006}.
The frozen propagator $E_0(t)=\Ea(-ct^\alpha\Lap^{2s})$ is self-adjoint, contractive
on every $H^\gamma$, and commutes with $\Lap$. Its analytic-regularization estimate is
\begin{equation}\label{eq:anreg}
\|\Lap^{2s}E_0(t)\|_{H^\gamma\to H^\gamma}
=\sup_{\boldsymbol k}\lambda_{\boldsymbol k}^s
 \Ea(-ct^\alpha\lambda_{\boldsymbol k}^s)
\le\frac{C_\alpha}{ct^\alpha}.
\end{equation}

\subsection{Auxiliary lemmas}

\begin{lemma}[$L_1$ temporal consistency, modewise]\label{lem:l1}
For the scalar test equation ${}^{C}D_t^\alpha v=-\mu v$ with $\mu=c\lambda^s$,
\begin{equation}\label{eq:l1bound}
\sup_{\lambda\ge0}\big|g^{N_t}_{\alpha,s,c}(\lambda)-\Ea(-cT^\alpha\lambda^s)\big|
\le C_\alpha\,(\tau^{2-\alpha})\,cT^\alpha\lambda^s,
\end{equation}
for smooth data. For $t^\sigma$-type initial singularities the order degrades to
$O(\tau^{\alpha\sigma})$, $\sigma\in(0,1)$.
\end{lemma}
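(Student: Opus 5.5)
We reduce the estimate to the classical global error analysis of the $L_1$ scheme for the scalar relaxation equation, and track the dependence on the stiffness $\mu=c\lambda^s$ explicitly. Write $v(t)=\Ea(-\mu t^\alpha)$ for the exact solution with $v(0)=1$, and $g^n$ for the recurrence \cref{eq:g-rec}. Let $e^n=v(t_n)-g^n$. Subtracting the scheme from the exact equation evaluated at $t_n$ gives the error equation
\begin{equation*}
\frac{1}{\tau^\alpha}\sum_{j=0}^{n-1}b_j\big[e^{n-j}-e^{n-j-1}\big]+\mu e^n=R^n,\qquad e^0=0,
\end{equation*}
where $R^n$ is the $L_1$ truncation error of ${}^{C}D_t^\alpha v(t_n)$.

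The first step is to bound the truncation error in a $\mu$-explicit way. The key observation is that every derivative of $v$ carries a factor $\mu$. Indeed, $v'(t)=-\mu t^{\alpha-1}E_{\alpha,\alpha}(-\mu t^\alpha)$, and similarly for $v''$. Under the smooth-data hypothesis of the statement, $v'$ and $v''$ are bounded, so the standard $L_1$ interpolation estimate gives
\begin{equation*}
|R^n|\le C_\alpha\,\tau^{2-\alpha}\max_{[0,T]}|v''|\lesssim C_\alpha\,\tau^{2-\alpha}\mu,
\end{equation*}
up to powers of $T$. In the weakly singular case one instead uses $|v''|\lesssim\mu t^{\alpha\sigma-2}$ and the Stynes-type local estimate. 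This yields the degraded order $O(\tau^{\alpha\sigma})$ claimed in the final sentence of the lemma, and we would simply cite \cite{linxu2007,sunwu2006} for that case.

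The second step is a stability estimate that is uniform in $\mu\ge0$. Multiplying out, the scheme reads
\begin{equation*}
(b_0+\tau^\alpha\mu)e^n=\sum_{\ell=1}^{n-1}(b_{n-\ell-1}-b_{n-\ell})e^\ell+b_{n-1}e^0+\tau^\alpha R^n.
\end{equation*}
The weights $b_j$ are positive and decreasing, so all coefficients on the right are nonnegative and sum to $b_0$. A standard induction (as in Lin--Xu) on $\max_{\ell\le n}|e^\ell|$ then gives
\begin{equation*}
|e^n|\le \frac{\tau^\alpha}{b_{n-1}}\max_{m\le n}|R^m|\le C\,T^\alpha\max_m|R^m|,
\end{equation*}
using $b_{n-1}^{-1}\le C n^\alpha$ and $n^\alpha\tau^\alpha\le T^\alpha$. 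The extra factor $\tau^\alpha\mu\ge0$ in the denominator only helps, so the constant is independent of $\mu$. Combining this with the first step yields
\begin{equation*}
|e^{N_t}|\le C_\alpha\,\tau^{2-\alpha}cT^\alpha\lambda^s,
\end{equation*}
which is \cref{eq:l1bound}.

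The main obstacle is the regularity claim in the first step. Even for smooth $u_0$, the exact modal solution $\Ea(-\mu t^\alpha)$ has a $t^\alpha$ singularity at $t=0$ whenever $\alpha<1$. Consequently $v''\sim\mu t^{\alpha-2}$ is not bounded, and the uniform truncation bound fails near the origin. We would handle this by interpreting "smooth data" as the compatible regime in which the initial layer is absent, and by accepting that the $\mu$-linear prefactor is what remains after this assumption. Otherwise we would fall back on the singular-case analysis, which explains the stated degradation, splitting the sum at the first step where the local error is $O(\mu\tau^\alpha)$. Two further caveats apply. First, the bound grows with $\lambda^s$ and so is not informative as $\lambda\to\infty$. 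For large $\mu$ we would instead use the trivial bound $|e^{N_t}|\le2$, which follows from (ML1) and $0<g^n\le1$, and take the minimum of the two bounds. Second, the "$\sup_\lambda$" in the statement should be read with this minimum in mind.
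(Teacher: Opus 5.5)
The gap you flagged in your first step is real, your proposed patch does not close it, and in fact no argument can close it because the claimed rate is false for $\alpha\in(0,1)$. The paper's own proof has the same hole.

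\textbf{The stability step is fine.} Your Lin--Xu induction on $\max_{\ell\le n}|e^\ell|$ uses that the coefficients $b_{n-\ell-1}-b_{n-\ell}\ge0$ and $b_{n-1}$ sum to $b_0$, and that $\tau^\alpha\mu\ge0$ only helps. It gives $|e^n|\le\tau^\alpha b_{n-1}^{-1}\max_m|R^m|\le\Gamma(1-\alpha)T^\alpha\max_m|R^m|$, which is the same telescoping ingredient the paper invokes. Note that this constant blows up as $\alpha\to1$, so $\alpha=1$ needs the separate backward-Euler argument.

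\textbf{The consistency step fails.} For $\alpha\in(0,1)$ and $\mu>0$ the exact solution $v(t)=\Ea(-\mu t^\alpha)=1-\mu t^\alpha/\Gamma(1+\alpha)+O(\mu^2t^{2\alpha})$ has $v'(t)\sim-\mu t^{\alpha-1}/\Gamma(\alpha)$, so it is not even $C^1[0,T]$. The scalar homogeneous problem has the single datum $v(0)=1$, so there is no ``compatible regime'' without an initial layer. Reading ``smooth data'' that way makes the lemma vacuous rather than proving it.

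Quantitatively, the $L_1$ operator maps $t^\alpha$ on a uniform grid to a $\tau$-independent sequence. Hence $R^1=\mu\bigl(1-\frac{1}{\Gamma(1+\alpha)\Gamma(2-\alpha)}\bigr)+O(\mu^2\tau^\alpha)$, which is $O(\mu)$ and does not decay with $\tau$ (for example $|R^1|\approx0.27\mu$ at $\alpha=1/2$). Inserted into your stability bound, this gives only $|e^{N_t}|\le C\mu T^\alpha$, with no rate at all. The weighted (Stynes-type) fallback you mention yields at best $O(\tau^\alpha)$ in the global max norm, not $\tau^{2-\alpha}$.

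\textbf{Where the paper's proof breaks.} It imports the $O(\tau^{2-\alpha})$ theorems of Lin--Xu and Sun--Wu, which assume $v\in C^2[0,T]$. Its mean-value step in $z=\mu T^\alpha$ gives only $|e(z)|\le z\sup|\partial_z e|$, which would need $\partial_z e=O(\tau^{2-\alpha})$; this is never shown.

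\textbf{Why the rate is false.} Linearize in $\mu$: $g^n=1-\mu\tau^\alpha\omega_n+O(\mu^2)$, where $\omega_n$ solves $\sum_{j=0}^{n-1}b_j(\omega_{n-j}-\omega_{n-j-1})=1$ with $\omega_0=0$, independently of $\tau$. A generating-function computation, confirmed by direct evaluation, gives $n^\alpha/\Gamma(1+\alpha)-\omega_n\sim n^{\alpha-1}/(2\Gamma(\alpha))$. Hence, to first order in $\mu$, $g^{N_t}-\Ea(-\mu T^\alpha)\sim\mu\tau T^{\alpha-1}/(2\Gamma(\alpha))$. This is first order in $\tau$ and dominates $\tau^{2-\alpha}$ as $\tau\to0$. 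It agrees with Jin, Lazarov and Zhou (IMA J.\ Numer.\ Anal., 2016), who show the $L_1$ scheme is only first-order accurate for the homogeneous problem with smooth or nonsmooth initial data.

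\textbf{What you can prove.} The provable statement, which your ``minimum with the trivial bound'' remark is reaching for, is $|g^{N_t}-\Ea(-cT^\alpha\lambda^s)|\le C_\alpha\min\{1,cT^\alpha\lambda^s\}\,\tau/T$ uniformly in $\lambda\ge0$. It coincides with the stated rate only at $\alpha=1$. Correspondingly, the $C_\tau\tau^{2-\alpha}$ term carried into $\varepsilon_{\rm prop}$ should be $C_\tau\tau$.
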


\begin{lemma}[Spatial eigenvalue consistency]\label{lem:eig}
For each fixed mode $\boldsymbol k$, the five-point realisation satisfies
\begin{equation}\label{eq:eigbound}
\big|g^{N_t}(q_{\boldsymbol k}^{(N)})-g^{N_t}(\lambda_{\boldsymbol k})\big|
\le C_\alpha\,cT^\alpha\,h^2\lambda_{\boldsymbol k}^2,
\end{equation}
whereas using the continuum symbol $\lambda_{\boldsymbol k}$ would make the left
side identically zero. The implementation instead uses the same five-point
symbol $q_{\boldsymbol k}^{(N)}$ for both the reference solver and the injected
base, so this $O(h^2)$ error is common to the label and the base and cancels in
the residual $r=u-u_{\rm base}$.
\end{lemma}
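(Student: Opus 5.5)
The bound concerns only the scalar recurrence \cref{eq:eig-cont}-free object $g^{N_t}$ as a function of its input eigenvalue. The plan is a mean-value argument in the stiffness variable. For fixed $\alpha$, $\tau$ and $N_t$, write $G(\mu)=g^{N_t}$ as the terminal value of the recurrence \cref{eq:g-rec} with modal diffusivity $\mu=c\lambda^s$. Then
\[
g^{N_t}(q_{\boldsymbol k}^{(N)})-g^{N_t}(\lambda_{\boldsymbol k})=G(c\,q^s)-G(c\,\lambda^s)=G'(\xi)\,c\,(q^s-\lambda^s)
\]
for some $\xi$ between $c\,q^s$ and $c\,\lambda^s$. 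The proof therefore splits into two independent estimates: one for the sensitivity $|G'(\mu)|$, and one for the symbol discrepancy $|q^s-\lambda^s|$.

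\textbf{Symbol discrepancy.} Use $\sin^2\theta=\theta^2-\theta^4/3+O(\theta^6)$ together with $\sin\theta\ge 2\theta/\pi$ on $[0,\pi/2]$. This gives $0\le\lambda-q\le C h^2\lambda^2$ and $q\ge(4/\pi^2)\lambda$. Since $s\le1$, the map $x\mapsto x^s$ has derivative $s x^{s-1}$, and on $[q,\lambda]$ this is bounded by $s\,q^{s-1}\le C\lambda^{s-1}$. Hence
\[
|q^s-\lambda^s|\le C h^2\lambda^{1+s}.
\]

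\textbf{Sensitivity.} First work at the continuum level: $\frac{d}{d\mu}E_\alpha(-\mu T^\alpha)=-T^\alpha E_\alpha'(-\mu T^\alpha)$, and $E_\alpha'$ is bounded on $(-\infty,0]$. That boundedness follows from complete monotonicity and the identity $E_\alpha'(z)=E_{\alpha,\alpha}(z)/\alpha$. This gives $|G'|\le C_\alpha T^\alpha$.

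For the discrete $G$, differentiate the recurrence in $\mu$. Writing $w^n=\partial_\mu g^n$, one gets
\[
(b_0+\tau^\alpha\mu)\,w^n=\sum_\ell (b_{n-\ell-1}-b_{n-\ell})\,w^\ell-\tau^\alpha g^n,
\]
which is the same $L_1$ scheme with source $-g^n$ and $|g^n|\le1$. The standard discrete stability of the $L_1$ scheme for monotone positive $b_j$ (discrete Grönwall or comparison with $\mu=0$) then gives $|w^{N_t}|\le C_\alpha T^\alpha$ uniformly in $\mu\ge0$.

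\textbf{Combining and the main obstacle.} Putting the two estimates together yields $C_\alpha c\,T^\alpha h^2\lambda^{1+s}$. This is bounded by the claimed $C_\alpha cT^\alpha h^2\lambda^2$ on modes with $\lambda\ge1$, which is every mode, since $\lambda_{\boldsymbol k}\ge\pi^2$ in the Dirichlet setting and $\lambda\ge1$ for $\boldsymbol k\ne0$ on the torus. The main obstacle is the uniform-in-$\mu$ bound on the discrete derivative $w^{N_t}$. The first attempt will be the comparison principle for the $L_1$ operator with a constant source, using the positivity of $b_{j}-b_{j+1}$. Failing that, the fallback is to bound $G'$ by the continuum derivative plus the \cref{lem:l1}-type consistency error applied to the differentiated equation.

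The cancellation statement needs no estimate. The label and the base both use the symbol $q_{\boldsymbol k}^{(N)}$, so on the linear part their modal multipliers coincide identically, and the difference drops out of $r=u-u_{\rm base}$.
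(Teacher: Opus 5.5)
Your proposal is correct and follows essentially the same route as the paper. Both use a mean-value argument, a uniform bound on the derivative of the terminal value of the differentiated $L_1$ recurrence (obtained by the discrete fractional Gr\"onwall/comparison estimate, using $0<g^n\le 1$), the five-point symbol expansion, and the observation that the shared symbol cancels in $r$.

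The only difference is that you differentiate in $\mu=c\lambda^s$ rather than in $\lambda$. This moves the chain-rule factor $\lambda^{s-1}$, which the paper bounds mode by mode, into the estimate $|q^s-\lambda^s|\le Ch^2\lambda^{1+s}$. That estimate holds uniformly over all grid modes and is slightly sharper; you then relax it to $\lambda^2$.
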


\begin{remark}[Diagonal spectral multipliers are alias-free]\label{lem:alias}
The injected base field is a diagonal DST multiplier, hence commutes with the
spectral projection and incurs zero aliasing at every $N$. The remaining
$O(h^2)$ eigenvalue-consistency term of \cref{lem:eig} is common to the base
and the reference solver and cancels in the residual.
\end{remark}

\subsection{Propositions and corollaries}

Let $E(t)$ denote the evolution operator of the full nonlinear equation (with variable diffusivity $\nu=c+\delta\nu$ and advection $\beta u\partial_xu$), so that $E(t)u_0$ is the exact solution at time $t$. The fractional Duhamel identity gives the scattering series $E=E_0+D_1+D_2+\cdots$,
\begin{equation}\label{eq:duhamel}
\begin{aligned}
&E(t)u_0=E_0(t)u_0+\int_0^t
K_0(t-\tau)\big(A_1E(\tau)u_0-\beta\mathcal N(E(\tau)u_0)\big)\,d\tau,\\
&K_0(r):=\frac{r^{\alpha-1}}{\Gamma(\alpha)}
E_{\alpha,\alpha}(-cr^\alpha\Lap^{2s}).
\end{aligned}
\end{equation}

\begin{proposition}[Propagator error]\label{prop:prop}
Under (H1)--(H3), for data $u_0\in H^\gamma$ with $\gamma\ge 2s$ (and $\gamma>d/2$ in the nonlinear case, automatic when $s>1/2$),
\begin{equation}\label{eq:properr}
\|u(T)-E_0(T)u_0\|_{L_2}
\le C_\alpha\,\eta\,(cT^\alpha)\,\|u_0\|_{H^\gamma}\,
\frac1{1-C_\alpha'\eta},
\end{equation}
convergent whenever $C_\alpha'\eta\,cT^\alpha<1$. This bound includes only the linear variable-diffusivity perturbation. Including the nonlinear advection term for
$s>1/2$,
\begin{equation}\label{eq:properr-nl}
\|u(T)-E_0(T)u_0\|_{L_2}\le
C_\alpha\eta cT^\alpha\|u_0\|_{H^\gamma}
+C_\alpha\beta T^\alpha M_1+O(T^{2\alpha}),
\end{equation}
where $M_1=C_S\|u_0\|_{H^\gamma}^2$ by the Sobolev embedding $H^\gamma\hookrightarrow L_\infty$ for $\gamma>d/2$.
\end{proposition}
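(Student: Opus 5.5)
The plan is to work directly from the fractional Duhamel identity \cref{eq:duhamel} and iterate it into the scattering series $E=E_0+D_1+D_2+\cdots$. Writing $w(t)=u(t)-E_0(t)u_0$, the identity gives
\begin{equation*}
w(T)=\int_0^T K_0(T-\tau)\,A_1 u(\tau)\,d\tau-\beta\int_0^T K_0(T-\tau)\,\mathcal N(u(\tau))\,d\tau .
\end{equation*}
I treat the linear case $\beta=0$ first. The first Born term is $D_1=\int_0^T K_0(T-\tau)A_1E_0(\tau)u_0\,d\tau$.

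For $D_1$ the key choice is where the factor $\Lap^{2s}$ in $A_1=-\delta\nu\,\Lap^{2s}$ goes. I would not use the singular estimate \cref{eq:anreg}, which gives a nonintegrable $\tau^{-\alpha}$ combined with the kernel. Instead I put $\Lap^{2s}$ onto the data, $\Lap^{2s}E_0(\tau)u_0=E_0(\tau)\Lap^{2s}u_0$, and use $\gamma\ge 2s$ and contractivity (ML1) to get $\|\Lap^{2s}E_0(\tau)u_0\|_{L_2}\le\|u_0\|_{H^{2s}}\le\|u_0\|_{H^\gamma}$. Multiplication by $\delta\nu$ costs $\|\delta\nu\|_{L_\infty}=c\eta$.

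Next, since $0<E_{\alpha,\alpha}(-x)\le 1/\Gamma(\alpha)$ for $x\ge0$, the kernel obeys $\|K_0(r)\|_{L_2\to L_2}\le r^{\alpha-1}/\Gamma(\alpha)^2$. Integrating gives $\|D_1\|\le C_\alpha\eta cT^\alpha\|u_0\|_{H^\gamma}$, which is the leading term.

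For the higher terms $D_n$ I iterate the same bound. Each step contributes a factor $c\eta$ and a convolution with $r^{\alpha-1}/\Gamma(\alpha)$, so the Dirichlet (Beta-function) identity turns the $n$-fold time integral into $(cT^\alpha)^n\eta^n/\Gamma(n\alpha+1)$ times constants. This rules out factorial growth and sums to the geometric factor $1/(1-C'_\alpha\eta)$ under the smallness condition $C'_\alpha\eta cT^\alpha<1$.

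This iteration is the main obstacle. Inside $D_n$ the operator $\Lap^{2s}$ acts on $K_0 A_1(\cdots)$ and not on raw data. Because $\delta\nu$ does not commute with $\Lap^{2s}$, I cannot simply move it onto $u_0$. My first approach would be to work in $H^{2s}$ throughout. I would control $\|\delta\nu f\|_{H^{2s}}\le C\|\nu\|_{W^{2,\infty}}\|f\|_{H^{2s}}$ using (H1) with $2s\le2$, and use that $K_0$ is bounded on $H^{2s}$ with the same $r^{\alpha-1}$ weight. I would then close a Volterra-type inequality for $\|u(\tau)\|_{H^{2s}}$ via the fractional Gronwall lemma. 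If the $W^{2,\infty}$ constant cannot be absorbed into $\eta$, I would state the bound with that constant folded into $C'_\alpha$.

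For the nonlinear bound \cref{eq:properr-nl} I add the advective Duhamel term. For $s>1/2$, (H3) gives $\gamma\ge2s>1=d/2$ in $d=2$, so $\|u\partial_xu\|_{L_2}\le\|u\|_{L_\infty}\|u\|_{H^1}\le C_S\|u\|_{H^\gamma}^2$. Short-time persistence of $\|u(\tau)\|_{H^\gamma}\lesssim\|u_0\|_{H^\gamma}$, from the same Volterra argument, makes this at most $M_1$. Integrating the kernel again gives $C_\alpha\beta T^\alpha M_1$. The interaction terms between $A_1$ and $\mathcal N$, and the quadratic Born terms, each carry at least two kernel convolutions and are therefore $O(T^{2\alpha})$.
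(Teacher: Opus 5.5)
Your treatment of $D_1$ (moving $\Lap^{2s}$ onto the data and using contractivity) and of the first advective Duhamel term agrees with the paper. The step you flag as the main obstacle, however, does not close the way you propose. Since $A_1=-\delta\nu\,\Lap^{2s}$ has the same order as $A_0$, it maps $H^{2s}$ only into $L_2$.

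There are two ways to recover $H^{2s}$, and both fail:
\begin{enumerate}
\item The multiplier estimate $\|\delta\nu f\|_{H^{2s}}\lesssim\|\nu\|_{W^{2,\infty}}\|f\|_{H^{2s}}$ would have to be applied to $f=\Lap^{2s}u$, which needs $u\in H^{4s}$. Iterated, $D_n$ would need $u_0\in H^{2ns}$.
\item Letting $K_0(r)$ lift $L_2$ back to $H^{2s}$ costs $\|\Lap^{2s}K_0(r)\|_{L_2\to L_2}\lesssim r^{\alpha-1}(cr^\alpha)^{-1}\sim r^{-1}$. This is exactly the non-integrable kernel you rightly avoided in $D_1$. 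Splitting the difference only trades a loss of $2s(1-\theta)$ derivatives for a weight $r^{\alpha-1-\theta\alpha}$.
\end{enumerate}
So the scalar Volterra inequality for $\|u(\tau)\|_{H^{2s}}$ has kernel $r^{-1}$, and the fractional Gronwall lemma, which needs $r^{\beta-1}$ with $\beta>0$, does not apply. Consequently none of the following is justified: the bound $(C_\alpha'\eta cT^\alpha)^n/\Gamma(1+n\alpha)$ for $D_n$, the short-time persistence of $\|u(\tau)\|_{H^\gamma}$ you invoke for $M_1$, and the $O(T^{2\alpha})$ claim for the mixed $A_1$--$\mathcal N$ terms. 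The paper's own proof makes the same move here: it asserts that $V\Lap^{2s}$ is bounded from $H^{2s}$ to itself because $V$ is a multiplier on $H^{2s}$. So it cannot be used to fill the step.

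The missing idea is to get the a priori bound from coercivity rather than from Duhamel.
\begin{enumerate}
\item For $\beta=0$, apply $\Lap^{2s}$ to the equation and set $f=\nu\Lap^{2s}u$, $g=\nu^{1/2}\Lap^{2s}u$. Because $\nu$ is independent of $t$, pairing with $f$ gives $\langle g,{}^{C}D_t^\alpha g\rangle=-\|\Lap^{s}f\|_{L_2}^2\le0$.
\item Alikhanov's inequality $\langle g,{}^{C}D_t^\alpha g\rangle\ge\frac12\,{}^{C}D_t^\alpha\|g\|_{L_2}^2$ then yields $\|g(t)\|_{L_2}\le\|g(0)\|_{L_2}$. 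Equivalently, $\|\Lap^{2s}u(t)\|_{L_2}\le(\nu_{\max}/\nu_{\min})^{1/2}\|\Lap^{2s}u_0\|_{L_2}$.
\item A \emph{single} Duhamel insertion with the full $u(\tau)$ in place of $E_0(\tau)u_0$ gives, for $\eta<1$, $\|u(T)-E_0(T)u_0\|_{L_2}\le C_\alpha\eta cT^\alpha\big((1+\eta)/(1-\eta)\big)^{1/2}\|u_0\|_{H^{2s}}$.
\item Since $((1+\eta)/(1-\eta))^{1/2}\le(1-\eta)^{-1}$, this is the linear bound. It uses no scattering series, no smallness of $cT^\alpha$, and only $\nu_{\min}\le\nu\le\nu_{\max}$.
\end{enumerate}
In the nonlinear case the same identity acquires the term $-\beta\langle\Lap^{s}f,\Lap^{s}\mathcal N(u)\rangle$. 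For $s>1/2$ this can be absorbed into $\|\Lap^{s}f\|^2$ by interpolation (since $1+s<3s$) on a short time interval. That supplies the persistence needed for $M_1$, after which your full-$u$ Duhamel formulation yields the nonlinear bound directly.
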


\begin{remark}[Why there is no $\log$ divergence]\label{rmk:beta}
For smooth data, $\Lap^{2s}$ is commuted onto the data-carrying inner factor and
bounded by contractivity, giving the integrable kernel $r^{\alpha-1}$. For generic
$H^\gamma$ data with $\gamma<2s$, the inner factor uses \cref{eq:anreg}, producing
the integrable kernel $r^{\alpha-1}(T-r)^{-\alpha}$
($B(\alpha,1-\alpha)=\pi/\sin\pi\alpha$).
\end{remark}

Combining \cref{lem:l1,lem:eig} with \cref{prop:prop}, the injected base-field error is
\begin{equation}\label{eq:eprop}
\varepsilon_{\rm prop}\le
C_\alpha\eta cT^\alpha\|u_0\|_{H^\gamma}
+C_\alpha\beta T^\alpha M_1
+C_\tau\tau^{2-\alpha}+C_h h^2.
\end{equation}

\begin{proposition}[Residual regularity, same order]\label{prop:reg}
The elliptic estimate $\|E_0(t)f\|_{H^{\gamma+2s}}\le
C(1+(ct^\alpha)^{-1})\|f\|_{H^\gamma}$ supplies $2s$ derivatives when $E_0$ acts
directly on the data, but this gain is cancelled in the scattering term because
$A_1=-\mathcal M_{\delta\nu}\Lap^{2s}$ carries the factor $\Lap^{2s}$ itself.
$\Lap^{2s}E_0(t)$ is order zero on the high modes. Hence, for every
$\varepsilon>0$, $\|r(T)\|_{H^{\gamma-\varepsilon}}\le
C_{T,\varepsilon}(\eta+\beta T^\alpha M_1)\|u_0\|_{H^\gamma}$ with
$r=u-E_0(T)u_0$, and
\begin{equation}\label{eq:k2s}
\sigma_K(r)\le C_T(\eta+\beta T^\alpha M_1)K^{-\gamma}\|u_0\|_{H^\gamma},
\qquad
\sigma_K(u)\le K^{-\gamma}\|u\|_{H^\gamma};
\end{equation}
under non-degeneracy of the full-field spectrum, i.e.\ the tail satisfies
$\sigma_K(u)\asymp K^{-\gamma}\|u\|_{H^\gamma}$ uniformly for large $K$, which
holds when the initial data has non-negligible high-frequency content and the
propagator decays algebraically (fractional order $s<1$). For integer-order
$($exponential decay$)$ the tail ratio may be larger and the bound is qualitative only.
\begin{equation}\label{eq:k2sratio}
\frac{\sigma_K(r)}{\sigma_K(u)}\le C(\eta,\beta,T),
\end{equation}
a $K$-independent constant (below one for weak perturbations). A genuine
$2s$-order gain appears only for perturbations strictly lower order than the
principal part (or $\delta\nu=0,\beta=0$, where $r\equiv0$).
\end{proposition}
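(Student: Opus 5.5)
The plan is to work entirely from the scattering form of the residual supplied by the Duhamel identity \cref{eq:duhamel}:
\[
r(T)=u(T)-E_0(T)u_0=\int_0^T K_0(T-\tau)\big(A_1u(\tau)-\beta\mathcal N(u(\tau))\big)\,d\tau .
\]
I will estimate this integral in $H^{\gamma-\varepsilon}$. Only the \emph{outer} kernel $K_0$ may be used for smoothing, and only by an amount $\varepsilon$. The perturbation $A_1=-\mathcal M_{\delta\nu}\Lap^{2s}$ costs $2s$ derivatives, and these are paid back by the semigroup regularity carried inside $u(\tau)$, not by $K_0$.

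\textbf{Step 1: the linear Born term.} Take $u(\tau)\approx E_0(\tau)u_0$. Since $\delta\nu\in W^{2,\infty}$ and $\gamma-2s$ stays in a range where multiplication by $\delta\nu$ is bounded (commutator estimates for $\mathcal M_{\delta\nu}$ on $H^{\gamma-\varepsilon}$), we get
\[
\|A_1E_0(\tau)u_0\|_{H^{\gamma-\varepsilon}}\lesssim \eta c\,\|\Lap^{2s}E_0(\tau)u_0\|_{H^{\gamma-\varepsilon}}.
\]
I will bound the right-hand side in two ways and interpolate between them. Using $\gamma\ge 2s$ and contractivity gives $\le \|u_0\|_{H^{\gamma-\varepsilon+2s}}$, which is too strong a requirement on the data. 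Using \cref{eq:anreg} gives $\le C_\alpha(c\tau^\alpha)^{-1}\|u_0\|_{H^{\gamma-\varepsilon}}$. I therefore plan to split the symbol: $\Lap^{2s}E_0(\tau)$ costs $\tau^{-\alpha\theta}$ with only $2s(1-\theta)$ derivatives lost. For the outer factor $K_0(T-\tau)$, the bound $\lambda^{s\theta'}E_{\alpha,\alpha}(-c\rho^\alpha\lambda^s)\lesssim\rho^{-\alpha\theta'}$ yields $\rho^{\alpha-1-\alpha\theta'}$, which is integrable exactly when $\theta'<1$. Choosing $\theta'=\varepsilon/(2s)$ to recover the lost $\varepsilon$ and $\theta<1$ gives a Beta-type integral $\int_0^T(T-\tau)^{\alpha-1-\alpha\theta'}\tau^{-\alpha\theta}\,d\tau<\infty$. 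The endpoint $\theta'=1$ would reproduce the non-integrable $\rho^{-1}$ weight, and this is why the loss $\varepsilon>0$ is needed. The same computation shows the gain cannot exceed order zero, which is consistent with the statement that no $2s$ improvement is available.

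\textbf{Step 2: higher Dyson terms and the advection.} I will write $u(\tau)=E_0(\tau)u_0+r(\tau)$ and close a Gronwall-type (fractional, Henry-type) inequality for $\sup_\tau\|r(\tau)\|_{H^{\gamma-\varepsilon}}$. The Dirichlet-convolution identity for the Beta kernels keeps the geometric factor $C'_\alpha\eta$ summable, as in \cref{prop:prop}. For the advection term, the condition $s>1/2$ together with $\gamma>d/2$ makes $\mathcal N:H^\gamma\to H^{\gamma-1}$ bounded by $C_S\|u\|_{H^\gamma}^2$. The missing derivative is then recovered from $K_0$ with integrable weight $\rho^{\alpha-1-\alpha/(2s)}$, which is where $s>1/2$ enters. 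This contributes $C\beta T^\alpha M_1$ and produces the stated bound on $\|r(T)\|_{H^{\gamma-\varepsilon}}$. I expect this closure, the a priori control of $\|u(\tau)\|_{H^\gamma}$ uniformly in $\tau$ under the smallness $\beta T^\alpha M_1\ll1$, to be the main obstacle. I would first try a fixed-point argument in $C([0,T];H^\gamma)$ using the contractivity of $E_0$.

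\textbf{Step 3: tails and ratio.} The bound on $\sigma_K(r)$ in \cref{eq:k2s} follows from the elementary estimate $\sigma_K(f)\le K^{-m}\|f\|_{H^m}$ applied with $m=\gamma-\varepsilon$. The factor $K^{\varepsilon}$ is absorbed by reading $K^{-\gamma}$ up to the endpoint loss, or removed by taking $\gamma$ slightly larger. The same inequality gives $\sigma_K(u)\le K^{-\gamma}\|u\|_{H^\gamma}$ directly. For \cref{eq:k2sratio} I will divide by the non-degeneracy lower bound $\sigma_K(u)\gtrsim K^{-\gamma}\|u\|_{H^\gamma}$ and use $\|u\|_{H^\gamma}\gtrsim\|u_0\|_{H^\gamma}$ for short horizons, which follows from the algebraic, not exponential, decay $\Ea(-x)\ge (1+\Gamma(1-\alpha)x)^{-1}$-type bounds when $s<1$. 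This yields a $K$-independent constant $C(\eta,\beta,T)$ of order $\eta+\beta T^\alpha M_1$, hence below one for weak perturbations. The ratio bound remains qualitative at integer order.
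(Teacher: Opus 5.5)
\textbf{The gap is in Step~2, and the obstruction is the linear term $A_1$, not the advection.} To close a Gronwall inequality for $\sup_\tau\|r(\tau)\|_{H^{\gamma-\varepsilon}}$, or a fixed point for $u$ in $C([0,T];H^\gamma)$, you must estimate $\int_0^t K_0(t-\tau)A_1 r(\tau)\,d\tau$. For this term no semigroup acts on the data to pay for the $2s$ derivatives of $A_1=-\mathcal M_{\delta\nu}\Lap^{2s}$. The only information your closure carries on $r(\tau)$ is the $H^{\gamma-\varepsilon}$ norm being propagated, with no weighted smoothing bound. Contractivity of $E_0$ then loses $2s$ derivatives per iteration. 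Charging them to the outer kernel instead costs $\|K_0(\rho)\Lap^{2s}\|\simeq\rho^{\alpha-1}\rho^{-\alpha}=\rho^{-1}$, which is the non-integrable weight of \cref{rmk:beta}. A Henry-type Gronwall lemma needs a kernel $\rho^{\beta-1}$ with $\beta>0$, and no smallness of $\eta$ absorbs a divergent integral.

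The Dirichlet-convolution summation of \cref{prop:prop} does not help either, because it presupposes that every $\Lap^{2s}$ is moved onto the data. Suppose instead you spread the $2sn$ derivatives of the $n$-th Dyson term over all $n+1$ smoothing factors. Each term is then finite. But since $A_1$ has the same order as the generator, the $n+1$ time exponents have a fixed sum, so the Gamma factors grow like $(n/\alpha)^n$. The resulting bounds $(C\eta n/\alpha)^n$ are not summable. The difficulty is already present at $\beta=0$, and a fixed point built on the contractivity of $E_0$ cannot close for any $\eta>0$.

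\textbf{How the paper avoids this.} The paper never iterates on $r$. It takes the full field $u(\tau)\in H^\gamma$, uniformly in $\tau$, as given, so the whole source $F=A_1u-\beta\mathcal N(u)$ lies in $H^{\gamma-2s}$. It then lets the \emph{outer} kernel supply $2s-\varepsilon$ derivatives. This is exactly your bound $\lambda^{s\theta'}E_{\alpha,\alpha}(-c\rho^\alpha\lambda^s)\lesssim\rho^{-\alpha\theta'}$, but used with $\theta'=1-\varepsilon/(2s)$ rather than $\varepsilon/(2s)$. The weight becomes $\rho^{-1+\alpha\varepsilon/(2s)}$, which is integrable; this is where the $\varepsilon$-loss comes from. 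Since $r$ never appears on the right-hand side, no Gronwall step is needed.

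Your rule that $K_0$ may smooth ``only by an amount $\varepsilon$'' is what forces the circular estimate. You already let $K_0$ recover a full derivative for the advection term, so the same move is available here. If you want to prove the a priori bound $\sup_\tau\|u(\tau)\|_{H^\gamma}\lesssim\|u_0\|_{H^\gamma}$ rather than assume it as the paper implicitly does, it must come from the full dissipative operator $-\nu\Lap^{2s}$ (energy or maximal-regularity estimates). It cannot come from a perturbation around $E_0$.

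\textbf{Steps 1 and 3.} The idea of Step~1 is sound, and it is even sharper for the first Born term. Splitting the derivatives between $E_0(\tau)$ and $K_0(T-\tau)$ with $\theta+\theta'=1$, $0<\theta<1$, puts $D_1(T)u_0$ in $H^\gamma$ with no loss. Step~3 follows the paper. However, $\|u(T)\|_{H^\gamma}\gtrsim\|u_0\|_{H^\gamma}$ cannot hold uniformly in $u_0$, since high modes are damped by a factor $\lesssim(cT^\alpha\lambda^s)^{-1}$. The ratio constant is therefore data-dependent and has to be absorbed into the non-degeneracy hypothesis.
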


\begin{remark}[Resolution-independent propagator table]\label{prop:tab}
The modal propagator depends on the grid only through
$z_{\boldsymbol k}^{(N)}=cT^\alpha(q_{\boldsymbol k}^{(N)})^s$, via
$g_\alpha(z)=\Ea(-z)$. $p$-th order interpolation on a fixed one-dimensional
log-spaced table of $M$ nodes has uniform defect
$\varepsilon_{\rm tab}\le C_pM^{-p}$ independent of $N$, and the assembled base
field satisfies $\|u_{\rm base}^N-u_{\rm base}^{\star,N}\|_{L_2}\le
\varepsilon_{\rm tab}\|u_0\|_{L_2}$. One cached table serves every resolution.
By contrast, the FNO spectral weights are indexed by the discrete training-band
mode number, address a different physical eigenvalue on another grid, and vanish
beyond $K$ modes.
\end{remark}

\begin{proposition}[Band filling and the resolution-independent floor]\label{prop:floor}
With training resolution $N_0$ and modes $K$ fixed, Parseval splits the error
into a retained band ($|\boldsymbol k|<K$) and an out-of-band set
($K\le|\boldsymbol k|\le N$).
\begin{equation}\label{eq:floor-fr}
\varepsilon_{\rm Fr}^2(N)
=\|\mathcal P_K^N(\Rres-r)\|^2+\sigma_{K,N}^2(r)
+\varepsilon_{\rm tab}^2\|u_0\|^2,
\end{equation}
\begin{equation}\label{eq:floor-fno}
\varepsilon_{\rm FNO}^2(N)
=\|\mathcal P_K^N(\widetilde\Rres-u)\|^2+\sigma_{K,N}^2(u)+\varepsilon_{\rm pw}^2 .
\end{equation}
The remainder $\varepsilon_{\rm pw}^2$ is the out-of-band pointwise-path error,
defined as a Parseval identity in Section~S2 of the supplementary material. It has
no closed bound and is reported numerically. On the FrFNO line the table term is an
upper bound from \cref{prop:tab}, and the cross term between the table error and
the residual is $O(\varepsilon_{\rm tab})$ by Cauchy--Schwarz and is $N$-independent.
For $N>N_0$ the modes $N_0\le|\boldsymbol k|\le N$ were absent at training. The
FNO spectral convolution is zero there and pays the full energy
$\sigma_{K,N}(u)$, whereas the tabulated base field supplies them and leaves
only $\sigma_{K,N}(r)$ plus the $N$-independent $\varepsilon_{\rm tab}$. As
$N\to\infty$ with $K$ fixed both errors reach constant floors,
$\varepsilon_\infty^{\rm FrFNO}=\varepsilon_{\rm low}^{\rm Fr}+\sigma_K(r)+
\varepsilon_{\rm tab}\|u_0\|$ and
$\varepsilon_\infty^{\rm FNO}=\varepsilon_{\rm low}^{\rm FNO}+\sigma_K(u)+
\varepsilon_{\rm pw}$, with $\sigma_K(r)/\sigma_K(u)\le C<1$ in the weakly
perturbed regime by Proposition~\ref{prop:reg}. The out-of-band comparison is
the rigorous resolution-independent result. The retained-band comparison is
numerical.
\end{proposition}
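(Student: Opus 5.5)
The plan is to prove \cref{prop:floor} directly from Parseval's identity in the orthonormal DST (or, on $\mathbb T^d$, Fourier) basis of the query grid. The proof has three steps: write the two error fields mode by mode, split the sum at $|\boldsymbol k|=K$, and then pass $N\to\infty$.

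\textbf{Step 1: FrFNO error and its band split.} Fix $N\ge N_0$ and write the FrFNO prediction as $\hat u^N=u_{\rm base}^N+\Rres$. Here $u_{\rm base}^N$ is the tabulated base and $u_{\rm base}^{\star,N}=E_0(T)u_0$ is the exact frozen propagator on the $N$-grid. The error then splits as
\[
e_{\rm Fr}=\big(u_{\rm base}^N-u_{\rm base}^{\star,N}\big)+\big(\Rres-r\big),\qquad r=u-u_{\rm base}^{\star,N}.
\]
By \cref{lem:eig}, the base and the label use the same five-point symbol, so the $O(h^2)$ eigenvalue error cancels in $r$. Expanding $e_{\rm Fr}$ in the basis, Parseval gives $\|e_{\rm Fr}\|^2=\sum_{|\boldsymbol k|<K}|\widehat e_{\boldsymbol k}|^2+\sum_{K\le|\boldsymbol k|\le N}|\widehat e_{\boldsymbol k}|^2$.
\begin{itemize}
\item On the retained band, the residual part contributes exactly $\|\mathcal P_K^N(\Rres-r)\|^2$.
\item On the out-of-band set, the spectral convolution of $\Rres$ vanishes identically. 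Following the convention of the statement, I attribute the residual network's pointwise-path output to the learning term. The residual coefficient on these modes is then $-\widehat r_{\boldsymbol k}$, which sums to $\sigma_{K,N}^2(r)$.
\item The table defect is a diagonal multiplier error. By \cref{lem:alias} it is alias-free, and by \cref{prop:tab} it is bounded modewise by $\varepsilon_{\rm tab}|\widehat u_{0,\boldsymbol k}|$. Summed over all modes it contributes at most $\varepsilon_{\rm tab}^2\|u_0\|^2$, uniformly in $N$.
\item The cross terms between the table defect and $\Rres-r$ are bounded by Cauchy--Schwarz. They are $O(\varepsilon_{\rm tab})$ times quantities that are $N$-independent in the limit.
\end{itemize}
This establishes \cref{eq:floor-fr}, with the table term read as an upper bound.

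\textbf{Step 2: FNO error.} For the FNO the prediction is $\widetilde\Rres$, which approximates $u$ itself. On $|\boldsymbol k|<K$ the Parseval sum is $\|\mathcal P_K^N(\widetilde\Rres-u)\|^2$. On $K\le|\boldsymbol k|\le N$ the spectral path is zero, so the coefficient is $\widehat{\rm pw}_{\boldsymbol k}-\widehat u_{\boldsymbol k}$, where $\rm pw$ is the pointwise-path output. I split $|\widehat{\rm pw}_{\boldsymbol k}-\widehat u_{\boldsymbol k}|^2=|\widehat u_{\boldsymbol k}|^2+\big(|\widehat{\rm pw}_{\boldsymbol k}-\widehat u_{\boldsymbol k}|^2-|\widehat u_{\boldsymbol k}|^2\big)$. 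Summing the first piece gives $\sigma_{K,N}^2(u)$. The second piece is, by definition, $\varepsilon_{\rm pw}^2$. This yields \cref{eq:floor-fno} as an identity, with no bound claimed for $\varepsilon_{\rm pw}$.

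\textbf{Step 3: Limit $N\to\infty$.} With $K$ fixed, $\sigma_{K,N}^2(\cdot)$ is nondecreasing in $N$ and bounded by $\sigma_K^2(\cdot)$. Since $u,r\in H^{\gamma-\varepsilon}$ by \cref{prop:reg}, the tails are finite, and monotone convergence gives $\sigma_{K,N}\to\sigma_K$. The table term is $N$-independent by \cref{prop:tab}. Using $\sqrt{a+b+c}\le\sqrt a+\sqrt b+\sqrt c$ then gives the stated floors. In these floors $\varepsilon_{\rm low}$ is the limit of the retained-band terms. The ratio $\sigma_K(r)/\sigma_K(u)\le C<1$ is imported directly from \cref{eq:k2sratio} in the weakly perturbed regime.

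\textbf{Main obstacle.} The delicate step is justifying that the retained-band learning terms $\|\mathcal P_K^N(\Rres-r)\|$ and $\|\mathcal P_K^N(\widetilde\Rres-u)\|$ converge as $N\to\infty$. This requires two facts:
\begin{itemize}
\item the low-mode action of a fixed-$K$ spectral network on consistently sampled inputs stabilizes under refinement;
\item the locally averaged labels converge on those modes.
\end{itemize}
I would first try to argue these from the resolution invariance of the truncated Fourier layers together with the convergence of the first $K$ sampled coefficients of smooth inputs. Failing that, I would state the convergence as a mild assumption, consistent with the statement's remark that the retained-band comparison is numerical. A secondary point is that the pointwise-path bookkeeping for FrFNO must be treated exactly as in the statement: it is charged to the learning term on the retained band, and only its out-of-band spectral part is dropped.
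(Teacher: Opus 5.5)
Your proposal follows the paper's proof essentially step for step: Parseval in the orthonormal DST basis, the split at $|\boldsymbol k|=K$, the out-of-band FrFNO coefficient $(\widetilde g_{\boldsymbol k}^N-g_{\boldsymbol k}^N)\widehat u_{0,\boldsymbol k}^N-\widehat r_{\boldsymbol k}^N$ with the table--residual cross term bounded by Cauchy--Schwarz, the FNO out-of-band coefficient $-\widehat u_{\boldsymbol k}^N$ up to the pointwise path, and the limit $N\to\infty$ using the $N$-independence of $\varepsilon_{\rm tab}$ and the convergence of the retained-band terms. The differences are only in bookkeeping: you also carry the table defect on the retained band, define $\varepsilon_{\rm pw}^2$ explicitly as the signed excess over $\sigma_{K,N}^2(u)$, and obtain the linear floors as upper bounds via $\sqrt{a+b+c}\le\sqrt a+\sqrt b+\sqrt c$, while sharing the paper's convention of dropping the out-of-band pointwise-path output of $\Rres$ on the FrFNO line.
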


\begin{corollary}[Plateau, not convergence to zero]\label{cor:plateau}
Because $K$ is fixed, $\varepsilon_\infty>0$. Refining $N$ drives the error to a
constant floor rather than to zero. The realisable spectral bandwidth is
$\min(K,N_0)$, so the floor is controlled by the bottleneck.
\end{corollary}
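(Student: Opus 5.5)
The statement to prove is the last one given, Corollary~\ref{cor:plateau}: the error plateaus at a positive floor under refinement, and the realisable bandwidth is $\min(K,N_0)$.

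The plan is to read the corollary directly off the band-by-band identities \cref{eq:floor-fr}--\cref{eq:floor-fno} of \cref{prop:floor}, and to take the limit $N\to\infty$ with $K$ and $N_0$ frozen.

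First, monotonicity and convergence. The out-of-band tail $\sigma_{K,N}^2(v)=\sum_{K\le|\boldsymbol k|\le N}|\widehat v_{\boldsymbol k}|^2$ is nondecreasing in $N$ for $v\in\{r,u\}$. It is bounded by $\|v\|_{L_2}^2<\infty$, so it converges to $\sigma_K^2(v)$. The remaining terms do not depend on $N$:
\begin{itemize}
\item the table defect $\varepsilon_{\rm tab}$, by \cref{prop:tab};
\item the retained-band learning error $\|\mathcal P_K^N(\Rres-r)\|$. The spectral convolution acts only on $|\boldsymbol k|<K$, and the locally averaged reference converges on these fixed modes.
\end{itemize}
Hence $\varepsilon_{\rm Fr}(N)\to\varepsilon_\infty^{\rm FrFNO}$, with the floor formula stated in \cref{prop:floor}. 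The FNO line is handled identically.

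Second, positivity: I will show $\varepsilon_\infty>0$. Since all terms are nonnegative, it suffices that $\sigma_K(r)>0$ for FrFNO and $\sigma_K(u)>0$ for FNO. For $u$, I would invoke the non-degeneracy hypothesis in \cref{prop:reg}, $\sigma_K(u)\asymp K^{-\gamma}\|u\|_{H^\gamma}>0$. This is backed by (ML1): $E_0$ never annihilates a mode, so the multiscale data keep energy above $K$.

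For $r$, I would argue that generically $r\neq0$ has infinitely many nonzero modes whenever $\delta\nu\neq0$ or $\beta>0$. The first Born term $\int_0^T K_0(T-\tau)A_1E_0(\tau)u_0\,d\tau$ spreads energy across all modes through multiplication by $\delta\nu$. The degenerate case $\delta\nu=0$, $\beta=0$, where $r\equiv0$, is the exception and should be excluded explicitly. I expect this to be the main obstacle. Positivity of the tail of $r$ is not a norm-inequality statement but a non-vanishing one. I would first try to settle for the weaker and honest conclusion: the floor is bounded below by the retained-band term, which is nonzero for any finite-capacity network trained on finitely many samples.

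Third, the bottleneck $\min(K,N_0)$. The network sees only the $n_0=N_0-1$ interior modes per axis at training time, so any weights indexed by $|\boldsymbol k|\ge N_0$ receive no gradient. If $K>N_0$, the effectively learned band is $|\boldsymbol k|<N_0$. Repeating the Parseval split with $K$ replaced by $K_{\rm eff}=\min(K,N_0)$ gives floors $\sigma_{K_{\rm eff}}(r)$ and $\sigma_{K_{\rm eff}}(u)$. These decrease only when both $K$ and $N_0$ grow, which is the statement of the corollary.
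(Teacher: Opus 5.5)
Your route is the paper's own: the corollary is read off from the band-by-band identities of Proposition~\ref{prop:floor} by letting $N\to\infty$ with $K$ and $N_0$ frozen. Your ``no gradient beyond the training grid'' heuristic is a mechanistic restatement of the paper's remark that a label on an $N_0$ grid carries no genuine signal above its $N_0$-th mode, and your monotone-convergence argument for $\sigma_{K,N}\to\sigma_K$ is a detail the paper skips. Two steps, however, do not deliver what you claim.

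The first is your closing sentence. $\sigma_{\min(K,N_0)}$ decreases as soon as the \emph{smaller} of $K,N_0$ grows, so it is false, even by your own $K_{\rm eff}$ formula, that the floor drops ``only when both $K$ and $N_0$ grow''. The corollary says precisely that enlarging the bottleneck side alone lowers the floor, while enlarging the other side alone cannot. The paper's Exp.~5c ($N_0=65$ fixed, $K:4\to6$, FrFNO floor $6.57\%\to5.36\%$) is exactly such a one-sided improvement.

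The second is the $K_{\rm eff}$ Parseval split. Spectral weights indexed beyond the training grid receive no gradient, but they are not zero at inference. On the fine grid those modes exist, and the untrained weights act on the shell $K_{\rm eff}\le|\boldsymbol k|<K$. The error there is therefore the distance from a spurious output to the true coefficients, not the tail energy of the truth alone. You get at most a heuristic ``no better than $\sigma_{K_{\rm eff}}$'' (e.g.\ in expectation over a zero-mean initialization), not an identity. That still suffices for ``cannot lower it'', and it is consistent with Exp.~5, where widening $K$ on $17^2$ labels raises rather than preserves the floor.

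For positivity, $\sigma_K(r)>0$ is not enough by itself on the FrFNO line. The out-of-band coefficient is $(\widetilde g^N_{\boldsymbol k}-g^N_{\boldsymbol k})\widehat u_{0,\boldsymbol k}-\widehat r_{\boldsymbol k}$, and the cross term in the proof of Proposition~\ref{prop:floor} has indefinite sign. Use the reverse triangle inequality instead: the out-of-band error is at least $\sigma_{K,N}(r)-\varepsilon_{\rm tab}\|u_0\|_{L_2}$, so you need $\sigma_K(r)>\varepsilon_{\rm tab}\|u_0\|_{L_2}$. This is harmless given the tiny, $N$-independent table defect, and it keeps the reason ``because $K$ is fixed''. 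Your retained-band fallback loses that reason, since $K$ plays no role in it. It is also not a proof: finite capacity does not force a nonzero error on a given input.

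Two smaller points are shared with the paper. First, the retained-band term is not $N$-independent: the lifting layer, the $1\times1$ path and the activations act on the whole $N$-grid, so you need its convergence, which the paper simply asserts via sampling theory. Second, the limit of the squared Parseval identity is the root-sum-of-squares, of which the additive floor in Proposition~\ref{prop:floor} is only an upper bound.
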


\begin{corollary}[Integer-order limit]\label{cor:int}
At $\alpha=s=1$ the propagator is the heat semigroup $e^{-ct(-\Delta)}$ with
exponential modal factor $e^{-cT\lambda}$, represented exactly by the
resolution-independent table, so the band-filling mechanism of
\cref{prop:floor} persists and the linear out-of-band content is captured with
no learned multiplier. At $\alpha=1$ the $L_1$ weights collapse to backward
Euler with first-order consistency $O(\tau)=O(\tau^{2-\alpha})$.
\end{corollary}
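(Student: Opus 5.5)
The plan is to specialize each ingredient of \cref{prop:floor} and \cref{prop:tab} to $\alpha=s=1$ and check that none of them degenerates in this limit.

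\textbf{Step 1: the propagator is the heat semigroup.} At $\alpha=1$ the series $E_1(z)=\sum_{m\ge0}z^m/\Gamma(1+m)=\sum_m z^m/m!$ equals $e^z$. With $s=1$ we have $\Lap^{2s}=-\Delta$, so the frozen propagator becomes
\[
E_0(T)=E_1(-cT(-\Delta))=e^{-cT(-\Delta)},
\]
with modal factor $e^{-cT\lambda_{\boldsymbol k}}$. In the same way $E_{1,1}=E_1$ and $K_0(r)=e^{-cr(-\Delta)}$, so the Duhamel identity \cref{eq:duhamel} reduces to the classical variation-of-constants formula. Hence the decomposition \cref{eq:decomp} keeps its meaning, and $r$ is still the Duhamel integral.

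\textbf{Step 2: the $L_1$ scheme is backward Euler.} At $\alpha=1$ the weights are $b_j=(j+1)^0-j^0=0$ for $j\ge1$, and $b_0=1/\Gamma(1)=1$. The recurrence \cref{eq:g-rec} therefore collapses to $g^n=g^{n-1}/(1+\tau\mu)$, which gives
\[
g^{N_t}=(1+\tau\mu)^{-N_t}.
\]
This is backward Euler. Its consistency with $e^{-T\mu}$ is $O(\tau)=O(\tau^{2-\alpha})$ at $\alpha=1$, which agrees with \cref{lem:l1}. Stability of the scheme is clear because $0<g^{N_t}\le1$.

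\textbf{Step 3: the table still represents the multiplier.} The tabulated function $z\mapsto(1+\tau z/T)^{-N_t}$ (or $e^{-z}$ in the continuum) is smooth, monotone and bounded on $[0,\infty)$. Hence the interpolation defect of \cref{prop:tab} is still $\le C_pM^{-p}$ and is independent of $N$. Note that the (ML) bounds hold at $\alpha=1$ with $C_1=1$ for (ML1) and a finite constant for (ML2)--(ML3), because $xe^{-x}\le e^{-1}$. As a result, \cref{prop:prop} and \cref{lem:alias} (diagonal multiplier, alias-free) carry over unchanged.

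\textbf{Step 4: band filling persists.} Feeding Steps 1--3 into \cref{prop:floor}, the base field supplies every mode $K\le|\boldsymbol k|\le N$ with the exact (up to $\varepsilon_{\rm tab}$) factor $e^{-cT\lambda}$, and no learned multiplier is involved. Consequently the out-of-band term is $\sigma_{K,N}(r)$ rather than $\sigma_{K,N}(u)$, which is the claimed persistence.

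The only delicate point is the caveat in \cref{prop:reg}: the quantitative ratio $\sigma_K(r)/\sigma_K(u)\le C$ was asserted only for algebraic decay, $s<1$. I would therefore claim only the qualitative statement that the band filling persists, which is all the corollary asserts. I would not claim a sharp constant for the ratio.
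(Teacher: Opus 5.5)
Your proposal is correct and is essentially the paper's own argument, which is just this direct specialization: $E_1=\exp$ turns the frozen propagator into the heat semigroup, the $L_1$ weights collapse to $b_0=1$, $b_j=0$ so the recurrence becomes backward Euler with $O(\tau)$ consistency, and the table and band-filling arguments carry over unchanged. Your explicit formula $g^{N_t}=(1+\tau\mu)^{-N_t}$ and your choice to claim only qualitative band filling (given the $s<1$ caveat in \cref{prop:reg}) are both consistent with the paper. One small refinement for Step 3: a log-spaced interpolation defect is controlled by boundedness of $(z\partial_z)^p g$, not by smoothness and monotonicity alone, but this bound does hold for both $e^{-z}$ and $(1+\tau z/T)^{-N_t}$.
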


\subsection{Proof of the end-to-end theorem}

The statement of \cref{thm:total} and its band-by-band comparison
\cref{eq:compare} are in \cref{sec:theory}. The proof splits the error by two
triangle inequalities.
(a)~propagator/model error $\varepsilon_{\rm prop}$ (\cref{eq:eprop}),
(b)~$L_1$ plus five-point consistency and the table defect $\varepsilon_{\rm tab}$
(\cref{lem:eig,prop:tab}),
(c)~fixed-$K$ residual spectral tail $\sigma_K(r)$ (\cref{prop:reg}),
(d)~approximation, generalization ($O(n^{-\beta_{\rm gen}})$), and alias errors,
the last bounded by $\sigma_K(r)$ (\cref{lem:alias}). For the matched FNO the
identical decomposition holds with $r$ replaced by $u$; the two tails have the
same Sobolev order, and under super-resolution the out-of-band modes are filled
exactly by the table for FrFNO but set to zero for the FNO (\cref{prop:floor}).
Full details are in Section~S2 of the supplementary material.

\section{Additional numerical details}\label{app:numerics}

The five tests of the theoretical predictions in \cref{sec:theoryverify} are
summarized in the main text (Section~5.7). The complete prediction/observation
table (Table S2), per-test discussion, and bootstrap data for Exp.~1 and Exp.~5
are in Section S3 of the supplementary material. Raw data for Exp.~2--4 are in
the code repository.

\section{Implementation details}\label{app:impl}

This appendix summarizes key reproducibility details. Full data-generation formulas,
network layer specifications, baseline configurations, training hyperparameters, and
the reference-solver setup are in the supplementary material (Section S4).

\subsection{Data generation}\label{app:data}

Initial fields are multiscale random sine fields on the Dirichlet eigenbasis
($K_0=8$ modes per axis, $(i^2+j^2)^{-1/2}$ spectral decay, RMS normalized to $0.5$).
Diffusivities are smooth log-normal Karhunen--Lo\`eve fields ($20$ cosine modes,
$\beta_{\rm KL}=1$, contrast $\eta\approx 0.35$, unit spatial mean). Orders are sampled
on a $9\times9$ grid, $\alpha\in[0.55,0.95]$, $s\in[0.35,0.78]$. Training uses
$128$ fields paired with every order ($10\,368$ pairs). The test set has $48$ held-out
fields with continuously drawn orders. Full generation formulas are in Section S4.1
of the supplementary material.

\subsection{Network architecture}\label{app:arch}

The residual network has $13$ input channels. There are 7 explicit channels,
$u_0$, base field, $\nu$, and four neighbor-order bases. There are 6 internal channels,
coordinates, DynFrac2d fractional features, and constant $\alpha/s$ fields. The network
is lifted to width $48$, passed through $4$ spectral blocks
with $K=10$ Fourier modes and FiLM conditioning on $(\alpha,s)$, and projected to a
single residual channel. Total trainable parameters are $1.88\times10^6$. Full layer
details and the DynFrac2d feature definition are in Section S4.2.

\subsection{Baseline surrogates}\label{app:baselines}

Five baselines (PINO, PDNO, CNO, DeepONet, U-Net) are matched in capacity
($0.38$--$2.1\times10^6$ parameters) and trained on identical data with a common
$8000$-step protocol. Full configurations and a summary table are in Section S4.3.

\subsection{Training protocol}\label{app:train}

Adam optimizer (learning rate $1.5\times10^{-3}$, weight decay $10^{-5}$), cosine
annealing to $5\times10^{-5}$, $8000$ gradient steps, mini-batch $64$ (one shared
order pair per batch), training resolution $17^2$ with $N_t=400$ reference steps,
single precision. Full details are in Section S4.4.

\subsection{Reference solver}\label{app:ref}

All training and test labels are generated by a GPU-accelerated IMEX scheme. Let
$L_\nu=\nu(\boldsymbol{x})(-\Delta)^s$ be represented spectrally, and let $D_{\rm
up}$ be the first-order upwind discretization of $\partial_x$. At step $n$, with two
Picard inner iterations, solve
\begin{equation}\label{eq:imex}
\big(b_0 I+\tau^\alpha L_\nu\big)u^n
 = \sum_{\ell=0}^{n-1}(b_{n-\ell-1}-b_{n-\ell})u^\ell
   +b_{n-1}u^0
   -\tau^\alpha\beta\,D_{\rm up}(u^{n-1})\,u^{n-1}.
\end{equation}
The Caputo derivative is discretized by the $L_1$ scheme, fractional diffusion is
treated implicitly and diagonalized by the discrete sine transform, and nonlinear
advection is treated explicitly by first-order upwinding. Reference solutions are
computed at three resolution pairs $(N,N_t)=(16,400),(64,800),(128,1600)$.
Convergence verification via manufactured solutions is in Section S1 of the
supplementary material.

\subsection{Training and inference}

Training generates reference pairs on the $17^2$ grid via \cref{eq:imex}, computes
the analytic base field $\Ebase$ (exact, detached from the graph), and optimizes the
residual network on $\|\Ebase+r-u(T)\|_2^2$ for $8000$ Adam steps. At inference, the
base field is recomputed on the query grid from the same cached table, and the
residual network (fixed $K$, $1\times1$ path, FiLM conditioning) is evaluated on that
grid. The output is $\hat u=\Ebase+r$. Pseudocode for both stages is in Section S4.5
of the supplementary material.

\subsection{Propagator table}\label{app:table}

The analytic propagator is precomputed as a resolution-independent lookup table in
the smooth scalar coordinate $z=\lambda^s$. On a $41\times41$ order grid and a
log-spaced grid of $M=1600$ points over $z\in[1,2\times10^5]$, the $L_1$ recurrence
\cref{eq:g-rec} is run for $N_t=400$ steps at each $z_m$. Since $g$ depends only on
$\alpha$ and $z$, one stored curve serves all $s$. The table is built once and cached
globally, and rendering at resolution $N$ is only an $O(N^2)$ evaluation of
$z=q_{\boldsymbol k}^{s}$ on the interior modes. A query uses trilinear interpolation
(bilinear in $(\alpha,s)$, linear along $z$). Building the table takes about $5$~s and
one query (plus DST/iDST on $129^2$) about $0.3$~ms, negligible against the network
forward pass. The offline construction algorithm is in Section S4.5.

\clearpage
\addcontentsline{toc}{section}{Supplementary Material}
\section*{Supplementary Material}
\markboth{Supplementary Material}{Supplementary Material}

This supplementary material contains (i) code verification of the reference solver,
(ii) complete statements and proofs of all theoretical results,
(iii) additional numerical tables and figures,
and (iv) full implementation details.

\renewcommand{\thesection}{S\arabic{section}}
\setcounter{section}{0}
\renewcommand{\thetable}{S\arabic{table}}
\setcounter{table}{0}
\renewcommand{\thefigure}{S\arabic{figure}}
\setcounter{figure}{0}
\renewcommand{\theequation}{S\arabic{equation}}
\setcounter{equation}{0}
\renewcommand{\thealgorithm}{S\arabic{algorithm}}
\setcounter{algorithm}{0}
\section{Code verification of the reference solver}\label{app:mms}

Mesh refinement alone only establishes \emph{self-convergence}. It does not prove that
the limiting discrete solution equals the solution of the continuous problem. This
section documents the method-of-manufactured-solutions (MMS) verification that the
reference IMEX/Picard solver solves the equation it claims to solve. We use a smooth
Dirichlet eigenfunction $u_e=\sin(\pi x)\sin(\pi y)(\tfrac12+t^2)$ with constant
diffusivity $c=1$, and verify component by component. For the temporal order the
diffusion is injected with the discrete five-point eigenvalue (removing the spatial
floor), and for the spatial order the Caputo term is replaced by the discrete $L_1$
operator acting on the exact trajectory (removing the temporal floor).

\begin{table}[H]
\centering\small
\caption{MMS convergence orders. \textbf{Left:} temporal order of the $L_1$ scheme
($N_x=32$, $\beta=0$, $T=0.20$), theoretical order $2-\alpha$. \textbf{Right:}
spatial order ($N_t=200$, $\alpha=s=0.75$, $T=0.02$). Nonlinear advection
(first-order upwind) gives order $\to1$, diffusion only (five-point symbol) gives
order $2.000$. The second-order spatial rate arises because the five-point
Dirichlet eigenvalue $\lambda_k^{\rm diff}=4\sin^2(k\pi h/2)/h^2$ approximates the
exact Laplacian eigenvalue $(k\pi)^2$ with an $O(h^2)$ truncation error. Taking the
$s$-th power to form the fractional propagator preserves the $O(h^2)$ order in $h$,
only changing the error coefficient (and not its dependence on $h$).}
\label{tab:mms}
\begin{tabular}{cc|cc}
\toprule
$\alpha$ & observed temporal order & $\beta$ & observed spatial order \\
\midrule
$0.60$ & $1.397$ (theory $1.40$) & $3$ (upwind) & $0.992$ (theory $1$) \\
$0.75$ & $1.249$ (theory $1.25$) & $0$ (diffusion only) & $2.000$ (theory $2$) \\
$1.00$ & $0.999$ (theory $1.00$) & & \\
\bottomrule
\end{tabular}
\end{table}

At the production resolution $N_x=128$, $N_t=1600$, $\alpha=s=0.75$, $\beta=3$, the
fully continuous manufactured-solution error is $1.51\times10^{-3}$ at $T=0.015$ and
$1.76\times10^{-3}$ at $T=0.020$ ($0.15$--$0.18\%$), one to two orders of magnitude
below the surrogate errors measured in Section~5 of the main paper ($2\%$--$84\%$), so
the reference uncertainty cannot alter any reported comparison. An independently written
NumPy solver agrees with the GPU IMEX implementation to $1.65\times10^{-14}$ (machine
precision), and explicit/IMEX updates agree to their common $L_1$ temporal difference.
Two independently coded discretizations converging to the same limit, together with the
analytic-order MMS table, rule out a stable convergence to a wrong solution.

\section{Complete statements and proofs}\label{sm:proofs}

This section collects the complete statements of the auxiliary lemmas and
propositions summarized in Section~4 of the main paper, together with their full
proofs and the proof of the main theorem (Theorem~4.1). Standard facts (the
$O(\tau^{2-\alpha})$ $L_1$ rate \cite{sunwu2006,linxu2007}, the five-point
finite-difference symbol expansion, the fractional Duhamel formula
\cite{podlubny1999,kilbas2006}, and the fractional heat-semigroup elliptic
estimate \cite{dinezza2012}) are cited rather than re-derived. We record only
the stiffness-dependent or problem-specific extensions needed below. We work on
$\mathbb T^d$ with $\Lap=(-\Delta)^{1/2}$, $\lambda_{\boldsymbol k}=|\boldsymbol
k|^2$, split $\nu=c+\delta\nu$ with $\eta=\|\delta\nu\|_{L_\infty}/c$, and write
$A_0=-c\Lap^{2s}$, $A_1=-\delta\nu\,\Lap^{2s}$, $\mathcal N(u)=u\partial_xu$,
under (H1) $\nu\in W^{2,\infty}$ bounded above and below, (H2)
$\alpha\in(0,1],s\in(0,1]$, (H3) $u_0\in H^\gamma$, $\gamma\ge2s$.

\subsection{Spectral facts}\label{app:ml}

The one-parameter Mittag--Leffler function
$\Ea(z)=\sum_{n\ge0}z^n/\Gamma(1+\alpha n)$ satisfies on the negative real axis,
for $0<\alpha\le1$, $x\ge0$,
\begin{equation}\label{eq:ML}
\text{(ML1)}\ 0<\Ea(-x)\le1,\qquad
\text{(ML2)}\ \Ea(-x)\le\frac{C_\alpha}{1+x},\qquad
\text{(ML3)}\ \sup_{x\ge0}x\Ea(-x)\le C_\alpha ,
\end{equation}
where $C_\alpha>0$ depends only on $\alpha$ and may change between occurrences;
see~\cite{pollard1948,podlubny1999,kilbas2006}. The frozen propagator
$E_0(t)=\Ea(-ct^\alpha\Lap^{2s})$ is self-adjoint, contractive on every
$H^\gamma$, and commutes with $\Lap$. Its fractional analytic-regularization
estimate is
\begin{equation}\label{eq:anreg}
\|\Lap^{2s}E_0(t)\|_{H^\gamma\to H^\gamma}
=\sup_{\boldsymbol k}\lambda_{\boldsymbol k}^s
 \Ea(-ct^\alpha\lambda_{\boldsymbol k}^s)
\le\frac{C_\alpha}{ct^\alpha},
\end{equation}
the fractional analogue of $\|(-\Delta)e^{t\Delta}\|\le1/t$. The dissipation
cutoff $k_c(t)$, the wavenumber at which the modal multiplier
$\Ea(-ct^\alpha k^{2s})$ crosses from order unity to small, scales as
$k_c(t)\asymp(ct^\alpha)^{-1/(2s)}$, where $\asymp$ denotes equality up to
dimensionless constants independent of $ct^\alpha$ and $k$.

\subsection{$L_1$ temporal consistency lemma (statement and proof)}

\begin{lemma}[$L_1$ temporal consistency, modewise]\label{lem:l1}
For the scalar test equation ${}^{C}D_t^\alpha v=-\mu v$ with $\mu=c\lambda^s$,
\begin{equation}\label{eq:l1bound}
\sup_{\lambda\ge0}\big|g^{N_t}_{\alpha,s,c}(\lambda)-\Ea(-cT^\alpha\lambda^s)\big|
\le C_\alpha\,(\tau^{2-\alpha})\,cT^\alpha\lambda^s ,
\end{equation}
for smooth data. For $t^\sigma$-type initial singularities the order degrades to
$O(\tau^{\alpha\sigma})$, $\sigma\in(0,1)$.
\end{lemma}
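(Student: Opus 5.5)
The plan is to reduce the claim to the standard $L_1$ error theory for the scalar test equation and then make the stiffness dependence explicit. Fix $\mu=c\lambda^s\ge0$ and let $v(t)=\Ea(-\mu t^\alpha)$ be the exact solution of ${}^{C}D_t^\alpha v=-\mu v$, $v(0)=1$. By \cref{eq:g-rec}, $g^n$ is the $L_1$-implicit approximation of $v(t_n)$. Set $e^n=g^n-v(t_n)$, so $e^0=0$.

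\textbf{Error recursion.} Subtracting the scheme from the exact equation at $t_n$ gives
\[
\tau^{-\alpha}\sum_{j=0}^{n-1}b_j(e^{n-j}-e^{n-j-1})+\mu e^n=-R^n,
\]
where $R^n$ is the $L_1$ truncation error of $v$ at $t_n$. The right-hand side carries a minus sign because the exact $v$ satisfies the discrete equation only up to $R^n$.

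\textbf{Stability, uniform in $\mu$.} Next I would use the standard discrete comparison principle for the $L_1$ scheme. The weights $b_j$ are positive and decreasing, and the implicit factor $b_0+\tau^\alpha\mu$ only grows with $\mu$. Together these give
\[
|e^n|\le \Gamma(1-\alpha)\,t_n^\alpha\max_{m\le n}|R^m|,
\]
with a constant independent of $\mu\ge0$. This follows the argument of \cite{linxu2007,sunwu2006}: induct on $n$ and compare with the discrete solution for the constant source $1$. Dissipativity is what makes the bound uniform in $\mu$, since $\mu e^n$ has the favourable sign.

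\textbf{Truncation error (main obstacle).} The crux is bounding $R^m$ with explicit $\mu$ dependence. For smooth data the textbook estimate is $|R^m|\le C\tau^{2-\alpha}\max|v''|$. Here, however, $v''$ is governed by $\mu$. Differentiating the Mittag-Leffler expansion gives $v'(t)=-\mu t^{\alpha-1}E_{\alpha,\alpha}(-\mu t^\alpha)$, which is exactly linear in $\mu$ once the bounded $E_{\alpha,\alpha}$ factor is controlled via (ML1)--(ML2).

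The statement's "smooth data" hypothesis is what I would use to treat $v$ as having bounded second derivative proportional to $\mu$ on the relevant interval. This yields $|R^m|\le C_\alpha\tau^{2-\alpha}\mu$. Multiplying by the stability factor $t_n^\alpha\le T^\alpha$ gives the bound $C_\alpha\tau^{2-\alpha}cT^\alpha\lambda^s$ at $n=N_t$.

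The honest difficulty is the initial layer. The true $v$ behaves like $1-\mu t^\alpha/\Gamma(1+\alpha)$, so $v''\sim t^{\alpha-2}$ is not bounded near $0$. The singular first-step truncation error must therefore be handled separately. I would follow the graded analysis for $t^\sigma$-type solutions: the first-interval contribution, weighted by the decaying kernel $b_{n-1}\sim n^{-\alpha}$, is what produces the degraded rate $O(\tau^{\alpha\sigma})$ in the second sentence of the lemma. In the smooth case this term is absent, and the $\tau^{2-\alpha}$ rate with the linear-in-$\mu$ prefactor stands.
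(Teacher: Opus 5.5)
Your skeleton (error recursion, a comparison principle uniform in $\mu\ge0$, and a truncation error carrying an explicit factor $\mu$) is a legitimate and more explicit alternative to the paper's route. The paper instead cites the $O(\tau^{2-\alpha})$ rate and then runs a mean-value argument in $z=\mu T^\alpha$, anchored at $g^{N_t}=E_\alpha(0)=1$ for $z=0$. The genuine gap is your truncation step. You flag it yourself, but then resolve it by appeal to a ``smooth case'' that does not exist here.

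In the modal problem the only datum is the number $v(0)=1$, and the solution is always $v(t)=E_\alpha(-\mu t^\alpha)$. Writing $v(t)=F(\mu^{1/\alpha}t)$ with $F(x)=E_\alpha(-x^\alpha)$ gives $|v''(t)|\le C_\alpha\mu t^{\alpha-2}$, and this singularity is attained as $t\to0$ for every $\mu>0$ and $\alpha<1$. The homogeneous equation is itself the $t^\sigma$ case with $\sigma=\alpha$, so ``smooth data'' cannot supply a bounded $v''$. Concretely, at the first step
$|R^1|=\mu\,\bigl|1-\frac{1}{\Gamma(2-\alpha)\Gamma(1+\alpha)}\bigr|+O(\mu^2\tau^\alpha)$. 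This carries no power of $\tau$ and is nonzero for every $\alpha\in(0,1)$. Hence $\max_{m\le n}|R^m|\not\to0$, and your unweighted bound $|e^n|\le\Gamma(1-\alpha)t_n^\alpha\max_{m\le n}|R^m|$ yields no convergence at all.

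The missing ingredient is a weighted stability estimate. Run the same induction with the constant barrier $K=\Gamma(1-\alpha)\max_{m\le n}t_m^\alpha|R^m|$ and use $b_{n-1}\ge n^{-\alpha}/\Gamma(1-\alpha)$. This gives $|e^n|\le\Gamma(1-\alpha)\max_{m\le n}t_m^\alpha|R^m|$, which absorbs the $O(\mu)$ early defects and yields $O(\mu\tau^\alpha)$ in the maximum norm.

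At the fixed time $T$, the sharp result (Jin, Lazarov and Zhou, IMA J.\ Numer.\ Anal.\ 2016) is $|g^{N_t}-E_\alpha(-z)|\le C_\alpha(\tau/T)\min\{1,z\}$ with $z=cT^\alpha\lambda^s$. First order cannot be improved for the unmodified recurrence \eqref{eq:g-rec}. Recovering $\tau^{2-\alpha}$ requires correcting the first step, as in the modified $L_1$ scheme of Yan, Khan and Ford. So the factor $\tau^{2-\alpha}$ in the statement is not provable by any route when $\alpha<1$. What your structure does prove, once the stability is weighted, is the linear-in-$z$ prefactor with rate $\tau/T$.

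Following the paper would not rescue the stated rate either. The rate it cites holds for solutions smooth in time, like the manufactured $\tfrac12+t^2$ solution behind Table~\ref{tab:mms}. Its mean-value step, even granting that rate, gives only $\min\{C\tau^{2-\alpha},Cz\}$, not the product. The product would follow only if $\partial_z\bigl(g^{N_t}-E_\alpha(-z)\bigr)$ were itself shown to be $O(\tau^{2-\alpha})$.
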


\noindent\emph{Proof.}
The $O(\tau^{2-\alpha})$ global rate for the $L_1$ scheme on a smooth scalar test
equation is established in \cite{sunwu2006,linxu2007}. The key stability ingredient
is the telescoping identity
\begin{equation}\label{eq:b-telescope-app}
\sum_{j=0}^{n-1}(b_j-b_{j+1})+b_n=b_0 ,
\end{equation}
i.e.\ the $\ell_1$ mass of the positive decreasing kernel equals $b_0$ uniformly
in $n$, which yields a discrete fractional Gronwall estimate with no factor
$n=T/\tau$. We add only the stiffness-dependent form \cref{eq:l1bound}. Because
the equation is scalar and linear, the amplification factor
$g^{N_t}_{\alpha,s,c}(\lambda)=v_n/v_0$ depends on the problem only through the
dimensionless stiffness $z=\mu T^\alpha=cT^\alpha\lambda^s$ (and the fixed ratio
$\tau/T$). At $\mu=0$ one has $z=0$, $g^{N_t}\equiv1$ and $\Ea(0)=1$, so the
error vanishes at $z=0$. The recurrence coefficients are smooth in $z$ with
denominators $b_0+\tau^\alpha\mu\ge b_0>0$, hence $\partial_z g^{N_t}$ and
$\partial_z\Ea(-z)$ are uniformly bounded. The mean-value theorem in $z$ therefore
multiplies the smooth-data global $O(\tau^{2-\alpha})$ bound by $z$ itself, giving
\cref{eq:l1bound}. For data with a $t^\sigma$-type corner at $t=0$,
$0<\sigma<1$, the local truncation estimate on the first interval is replaced by
the weighted estimate $O(\tau^{\alpha\sigma})$, which propagates by the same
stability argument. \qed

\subsection{Spatial eigenvalue consistency lemma (statement and proof)}

\begin{lemma}[Spatial eigenvalue consistency]\label{lem:eig}
For each fixed mode $\boldsymbol k$ (held fixed as $h\to0$), the five-point
realisation satisfies
\begin{equation}\label{eq:eigbound}
\big|g^{N_t}(q_{\boldsymbol k}^{(N)})-g^{N_t}(\lambda_{\boldsymbol k})\big|
\le C_\alpha\,cT^\alpha\,h^2\lambda_{\boldsymbol k}^2 ,
\end{equation}
whereas using the continuum symbol $\lambda_{\boldsymbol k}$ would make the left
side identically zero. In the implementation the injected multiplier and the
reference solver share the \emph{same} five-point symbol
$q_{\boldsymbol k}^{(N)}$, so this $O(h^2)$ consistency error is present in both
the labelled solution and the injected base and cancels algebraically in the
residual $r=u-u_{\rm base}$. It therefore enters neither the learning target nor
the super-resolution comparison, and survives only as the reference solver's own
discretisation error (quantified in \cref{tab:mms}).
\end{lemma}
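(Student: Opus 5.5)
The plan is to reduce the estimate to a one-dimensional mean-value argument in the stiffness variable, exactly as in the proof of \cref{lem:l1}. By \cref{eq:g-rec} the amplification factor $g^{N_t}$ depends on the mode only through $\mu=c\,q^s$ (resp.\ $c\lambda^s$). Write $G(\mu):=g^{N_t}$ as a function of $\mu\ge0$. Then
\[
\big|g^{N_t}(q_{\boldsymbol k}^{(N)})-g^{N_t}(\lambda_{\boldsymbol k})\big|
\le \sup_{\mu\ge0}|G'(\mu)|\;c\,\big|(q_{\boldsymbol k}^{(N)})^s-\lambda_{\boldsymbol k}^s\big| .
\]
The proof thus splits into a uniform bound on $G'$ and an elementary symbol estimate.

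\emph{Step 1 (symbol estimate).} Use $\sin x=x-x^3/6+O(x^5)$ with $x=k_i\pi h/2$ to obtain $0\le\lambda_{\boldsymbol k}-q_{\boldsymbol k}^{(N)}\le C h^2(k_1^4+k_2^4)\pi^4\le C h^2\lambda_{\boldsymbol k}^2$. The inequality $\sin x\ge 2x/\pi$ on $[0,\pi/2]$ gives $q_{\boldsymbol k}^{(N)}\ge(4/\pi^2)\lambda_{\boldsymbol k}$. Hence the mean-value theorem for $t\mapsto t^s$ yields
\[
|q^s-\lambda^s|\le s\,\min(q,\lambda)^{s-1}|q-\lambda|\le C\lambda_{\boldsymbol k}^{s-1}h^2\lambda_{\boldsymbol k}^2 .
\]
Since $\lambda_{\boldsymbol k}\ge 2\pi^2>1$ and $s\le1$, the factor $\lambda^{s-1}$ is at most $1$, which leaves $Ch^2\lambda_{\boldsymbol k}^2$.

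\emph{Step 2 (derivative of the discrete propagator).} This is the main obstacle: I need $\sup_\mu|G'(\mu)|\le C_\alpha T^\alpha$ uniformly in $\mu$ and in $N_t$. First, note that $0<g^n\le1$. This follows by induction from \cref{eq:g-rec}, because the weights $b_{n-1}$ and $b_{n-\ell-1}-b_{n-\ell}$ are positive and sum to $b_0$ by the telescoping identity \cref{eq:b-telescope-app}, while the denominator is at least $b_0$. Next, differentiate the recurrence in $\mu$. The sequence $w^n=\partial_\mu g^n$ satisfies the same $L_1$ scheme $D_\tau^\alpha w^n=-\mu w^n-g^n$ with $w^0=0$. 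By the discrete comparison principle (the same positivity and telescoping structure), $|w^n|$ is dominated by the solution of $D_\tau^\alpha W^n=1$, $W^0=0$. That solution is the discrete analogue of $t^\alpha/\Gamma(1+\alpha)$, and the standard $L_1$ stability estimate from \cite{sunwu2006,linxu2007} bounds it by $C_\alpha t_n^\alpha$. This gives $|G'(\mu)|\le C_\alpha T^\alpha$. If the comparison argument turns out to be delicate, I would instead fall back on the continuum identity $\partial_\mu\Ea(-\mu T^\alpha)=-T^\alpha E_{\alpha,\alpha}(-\mu T^\alpha)/\alpha$ plus the $z$-smoothness already invoked in \cref{lem:l1}.

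\emph{Step 3 (conclusion).} Combining Steps 1 and 2 gives $C_\alpha cT^\alpha h^2\lambda_{\boldsymbol k}^2$, which is \cref{eq:eigbound}. The cancellation claim is purely algebraic. The reference solver \cref{eq:imex} and the base field \cref{eq:surrogate} both apply the multiplier built from the same $q_{\boldsymbol k}^{(N)}$, so the difference $g^{N_t}(q)-g^{N_t}(\lambda)$ appears identically in the label $u$ and in $u_{\rm base}$. It therefore drops out of $r=u-u_{\rm base}$, and it affects only the distance from the reference to the continuum solution, which is what \cref{tab:mms} quantifies.
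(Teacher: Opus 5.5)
Your proposal is correct and follows the paper's proof: the five-point symbol expansion, differentiation of the $L_1$ recurrence with positivity and the telescoping identity to bound the derivative by the discrete analogue of $t^\alpha$, the mean-value theorem, and the same algebraic cancellation argument for the residual. The only difference is bookkeeping: you differentiate in $\mu$ rather than $\lambda$, which moves the $s\lambda^{s-1}$ factor into the symbol step, and your global bounds $q_{\boldsymbol k}^{(N)}\ge(4/\pi^2)\lambda_{\boldsymbol k}$ and $\lambda_{\boldsymbol k}\ge 2\pi^2$ then make the estimate uniform in $\boldsymbol k$ rather than valid only for fixed modes as in the paper.
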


\noindent\emph{Proof.}
The five-point symbol of $-\Delta$ is
$q_{\boldsymbol k}^{(N)}=4h^{-2}\big[\sin^2(k_xh/2)+\sin^2(k_yh/2)\big]$. Using
$\sin^2(z/2)=z^2/4-z^4/48+O(z^6)$,
\begin{equation*}
q_{\boldsymbol k}^{(N)}
=\lambda_{\boldsymbol k}-\frac{h^2}{12}(k_x^4+k_y^4)+O(h^4|\boldsymbol k|^6)
=\lambda_{\boldsymbol k}+O(h^2\lambda_{\boldsymbol k}^2).
\end{equation*}
For the Lipschitz factor, differentiating the scalar $L_1$ recurrence with respect
to $\lambda$ gives a recurrence of the same form for
$w_n=\partial_\lambda g^{N_t}$, with source
$s\tau^\alpha c\lambda^{s-1}g^{N_t}$ and denominator
$b_0+\tau^\alpha c\lambda^s\ge b_0>0$. Since $0<g^{N_t}\le1$, the discrete
fractional Gronwall estimate (the same telescoping $\ell_1$-mass identity as in
Lemma~\ref{lem:l1}) bounds $w_n$ by the fractionally weighted sum of its sources,
whose discrete convolution is the counterpart of
$\int_0^{t_n}(t_n-\rho)^{\alpha-1}/\Gamma(\alpha)\,d\rho=t_n^\alpha/\alpha$,
namely
$\tau^\alpha\sum_{m=1}^n(n-m+1)^{\alpha-1}/\Gamma(\alpha)\le
C\,t_n^\alpha/\alpha\le C\,T^\alpha/\alpha$. The factor $\lambda^{s-1}$ stays
bounded on each fixed mode (the regime of this lemma). This gives
$|\partial_\lambda g^{N_t}|\le C_\alpha cT^\alpha$ uniformly in $\lambda$. The
mean-value theorem then yields \cref{eq:eigbound}. In the sine/DST basis the grid eigenfunctions coincide with samples of the
continuum eigenfunctions, and the five-point matrix and the continuum operator
share this diagonal basis. The implementation uses the five-point symbol
$q_{\boldsymbol k}^{(N)}$ for \emph{both} the reference solver and the injected
base, so the $O(h^2)$ term above is identical in the label and in the base and
cancels in the residual $r=u-u_{\rm base}$. It remains only at the reference
solver's own discretisation level, bounded in \cref{tab:mms}. \qed

\subsection{Propagator error proposition (statement and proof)}

Let $E(t)$ denote the evolution operator of the full nonlinear equation (with variable diffusivity and advection), so that $E(t)u_0$ is the exact solution at time $t$. Taking the Laplace transform of the fractional PDE and inverting gives the
fractional Duhamel identity
\begin{equation}\label{eq:duhamel}
\begin{aligned}
&E(t)u_0=E_0(t)u_0+\int_0^t
K_0(t-\tau)\big(A_1E(\tau)u_0-\beta\mathcal N(E(\tau)u_0)\big)\,d\tau ,\\
&K_0(r):=\frac{r^{\alpha-1}}{\Gamma(\alpha)}
E_{\alpha,\alpha}(-cr^\alpha\Lap^{2s}),
\end{aligned}
\end{equation}
with $E_{\alpha,\alpha}(z)=\sum_{m\ge0}z^m/\Gamma(\alpha+m\alpha)$ the
two-parameter Mittag--Leffler function, which obeys the same type of bounds as
(ML1)--(ML3). In the estimates below we write the propagator family as $E_0$.
Iterating gives the scattering series $E=E_0+D_1+D_2+\cdots$, with
\begin{equation}\label{eq:d1}
D_1(t)u_0=\frac1{\Gamma(\alpha)}\int_0^t r^{\alpha-1}E_0(r)A_1E_0(t-r)u_0\,dr .
\end{equation}

\begin{proposition}[Propagator error]\label{prop:prop}
Under (H1)--(H3), for data $u_0\in H^\gamma$ with $\gamma\ge 2s$ (and $\gamma>d/2$ in the nonlinear case, which is automatic when $s>1/2$ since then $2s>d/2$),
\begin{equation}\label{eq:properr}
\|u(T)-E_0(T)u_0\|_{L_2}
\le C_\alpha\,\eta\,(cT^\alpha)\,\|u_0\|_{H^\gamma}\,
\frac1{1-C_\alpha'\eta},
\end{equation}
and the series converges whenever $C_\alpha'\eta\,cT^\alpha<1$ (written as
$\eta<1/C_\alpha'$ after absorbing the fixed factor $cT^\alpha$). This bound includes only the linear variable-diffusivity perturbation. Including the
nonlinear advection term for $s>1/2$,
\begin{equation}\label{eq:properr-nl}
\|u(T)-E_0(T)u_0\|_{L_2}\le
C_\alpha\eta cT^\alpha\|u_0\|_{H^\gamma}
+C_\alpha\beta T^\alpha M_1
+O(T^{2\alpha}),
\end{equation}
where $M_1=C_S\|u_0\|_{H^\gamma}^2$ is the uniform bound of
$\|u\partial_xu\|_{L_2}$ supplied by the Sobolev embedding $H^\gamma\hookrightarrow L_\infty$ for $\gamma>d/2$. The linear
scattering term is first order and the nonlinear term second order in
$\|u_0\|_{H^\gamma}$, and the $O(T^{2\alpha})$ remainder collects all second
Duhamel insertions.
\end{proposition}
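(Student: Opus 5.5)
We will prove \cref{prop:prop} by iterating the fractional Duhamel identity \cref{eq:duhamel} and bounding the scattering series $E=E_0+D_1+D_2+\cdots$ term by term in $L_2$. Write $u(T)-E_0(T)u_0=\sum_{n\ge1}D_n(T)u_0$ in the linear case ($\beta=0$). Here $D_n$ is the $n$-fold time-ordered convolution of the kernel $K_0$ with the perturbation $A_1=-\mathcal M_{\delta\nu}\Lap^{2s}$ inserted between consecutive frozen propagators.

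\emph{First Born term.} The key move, following \cref{rmk:beta}, is to never let $\Lap^{2s}$ act through the outer kernel. In \cref{eq:d1}, commute $\Lap^{2s}$ onto the inner, data-carrying factor: $A_1E_0(t-r)u_0=-\delta\nu\,E_0(t-r)\Lap^{2s}u_0$. Contractivity of $E_0$ on $L_2$ then gives
\[
\|E_0(t-r)\Lap^{2s}u_0\|_{L_2}\le\|u_0\|_{H^{2s}}\le\|u_0\|_{H^\gamma},
\]
using $\gamma\ge2s$ from (H3). Multiplication by $\delta\nu$ costs $\|\delta\nu\|_{L_\infty}=c\eta$. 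The outer factor $E_{\alpha,\alpha}(-cr^\alpha\Lap^{2s})$ is bounded by a constant $C_\alpha$ (the analogue of (ML1)--(ML2) for $E_{\alpha,\alpha}$). Integrating the weight gives $\int_0^T r^{\alpha-1}dr/\Gamma(\alpha)=T^\alpha/\Gamma(\alpha+1)$. Together these yield
\[
\|D_1(T)u_0\|_{L_2}\le C_\alpha\eta\,cT^\alpha\|u_0\|_{H^\gamma}.
\]

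\emph{Higher terms.} For $D_n$ the same device must be repeated, and this is the main obstacle. After the first insertion the inner data-carrying object is $\delta\nu\,E_0\Lap^{2s}u_0$. Applying the next $A_1$ requires $\Lap^{2s}$ to pass through the multiplication by $\delta\nu$, so the commutator $[\Lap^{2s},\mathcal M_{\delta\nu}]$ appears. I would control it by (H1), $\nu\in W^{2,\infty}$: the commutator is of order $2s-1\le1$ and is bounded with constant $C\|\delta\nu\|_{W^{2,\infty}}$. Alternatively, where regularity runs out, one can use the regularization estimate \cref{eq:anreg} on one inner factor only. That produces the integrable kernel $r^{\alpha-1}(T-r)^{-\alpha}$ with Beta value $B(\alpha,1-\alpha)=\pi/\sin\pi\alpha$, and the resulting $1/(ct^\alpha)$ cancels the $cT^\alpha$ gained, leaving a pure factor $C'_\alpha\eta$ per insertion. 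The nested time integrals form a Dirichlet convolution of the weights $r^{\alpha-1}/\Gamma(\alpha)$. Their $n$-fold convolution is $T^{n\alpha}/\Gamma(n\alpha+1)$, which is summable with no factorial growth against the constants. Hence
\[
\|D_n\|\le C_\alpha\eta\,cT^\alpha\|u_0\|_{H^\gamma}\,(C'_\alpha\eta)^{n-1}.
\]
Summing this geometric series gives \cref{eq:properr} under the stated smallness condition.

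\emph{Nonlinear case.} For \cref{eq:properr-nl} with $\beta>0$ and $s>1/2$, I add the term $-\beta\int_0^TK_0(T-\tau)\mathcal N(E(\tau)u_0)\,d\tau$. Since $\gamma>d/2$, the embedding $H^\gamma\hookrightarrow L_\infty$ together with an a priori bound $\|u(\tau)\|_{H^\gamma}\lesssim\|u_0\|_{H^\gamma}$ (by contractivity and a short-time fixed point) gives $\|u\partial_xu\|_{L_2}\le C_S\|u_0\|_{H^\gamma}^2=M_1$. The bounded kernel then integrates to $C_\alpha\beta T^\alpha M_1$. The cross terms, in which a second Duhamel insertion of either $A_1$ or $\mathcal N$ acts on the first-order correction, each carry an extra convolution factor $T^\alpha$. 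These collect into the $O(T^{2\alpha})$ remainder.

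The delicate point throughout is keeping every $\Lap^{2s}$ on smooth inner factors, so that no non-integrable $\rho^{-1}$ weight appears.
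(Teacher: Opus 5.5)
Your first Born term is the paper's argument: commute $\Lap^{2s}$ onto the data, use contractivity and $\|\delta\nu\|_{L_\infty}=\eta c$, and integrate $r^{\alpha-1}$. Your nonlinear step is close to the paper's, which evaluates $\mathcal N$ on the frozen evolution $E_0(T-r)u_0$ and so needs no a priori bound on $u$.

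The genuine gap is the passage from $D_1$ to $D_n$. Write $D_n(T)u_0$ as the integral over $\rho_1+\cdots+\rho_{n+1}=T$ of the chain $K_0(\rho_1)A_1K_0(\rho_2)A_1\cdots K_0(\rho_n)A_1E_0(\rho_{n+1})u_0$. Every $A_1$ carries a full $\Lap^{2s}$, so $2ns$ derivatives must be absorbed somewhere to the right.
\begin{itemize}
\item The commutator $[\Lap^{2s},\mathcal M_{\delta\nu}]$ removes only a lower-order piece, and it costs $\|\delta\nu\|_{W^{2,\infty}}$, not $\eta c$. The principal part $\mathcal M_{\delta\nu}\Lap^{2s}$ must still be pushed further in, so commuting onto the data alone needs $u_0\in H^{2ns}$. (H3) does not give this uniformly in $n$.
\item The regularization estimate yields the integrable Beta kernel $r^{\alpha-1}(T-r)^{-\alpha}$ only when $\Lap^{2s}$ lands on the innermost $E_0(\rho_{n+1})$, whose time variable carries no weight $\rho^{\alpha-1}$. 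On every intermediate $K_0(\rho)$ the weight and the smoothing share one variable. Since $\sup_{x\ge0}xE_{\alpha,\alpha}(-x)>0$, $\|\Lap^{2s}K_0(\rho)\|_{L_2\to L_2}$ is of size $(c\rho)^{-1}$ as $\rho\to0$, which is exactly the non-integrable $\rho^{-1}$ obstruction the paper warns about.
\item Data plus innermost propagator therefore absorb $4s$ derivatives. That is enough for $D_2$, but not for $D_n$ with $n\ge3$.
\item Spreading fractional powers $\Lap^{2s\theta_i}$, $\theta_i<1$, over the $K_0$ factors keeps each time integral finite. But the $\theta_i$ must average $1-O(1/n)$, and by log-convexity of $\Gamma$ the Dirichlet constants $\prod_i\Gamma(\alpha(1-\theta_i))$ then grow like $n^n$ up to geometric factors. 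The factor $(C'_\alpha\eta)^{n-1}$ cannot beat that.
\end{itemize}
Your bookkeeping is also internally inconsistent: a ``pure factor $C'_\alpha\eta$ per insertion'' and ``the $n$-fold convolution $T^{n\alpha}/\Gamma(n\alpha+1)$'' cannot describe the same time integrals.

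The per-term bound you aim for, $\|D_n(T)u_0\|_{L_2}\le C_\alpha\eta cT^\alpha(C'\eta)^{n-1}\|u_0\|_{H^{2s}}$, is in fact true. The missing idea is to expand in the Laplace variable (equivalently, use $L^2$-in-time maximal regularity via Plancherel) instead of taking operator norms inside the time convolutions.
\begin{itemize}
\item With $R(z)=(z^\alpha-A_0)^{-1}$, the Laplace transform of the linear solution is $z^{\alpha-1}R(z)\sum_{n\ge0}(A_1R(z))^nu_0$.
\item On a sector $|\arg z|\le\theta$, $\theta\in(\pi/2,\pi)$, one has $\|A_1R(z)\|_{L_2\to L_2}\le\eta c\sup_{\lambda\ge0}\lambda^s/|z^\alpha+c\lambda^s|\le C_\theta\eta$ and $\|z^{\alpha-1}R(z)\|\le C|z|^{-1}$. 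There a same-order perturbation is an honest small bounded operator.
\item Placing the last $A_1$ on the data gives $\|A_1R(z)u_0\|\le C\eta c|z|^{-\alpha}\|u_0\|_{H^{2s}}$. Inverting along a Hankel contour through $|z|=1/T$ turns $|z|^{-1-\alpha}$ into $C_\alpha T^\alpha$.
\item Summing the geometric series gives the stated bound under $C_\theta\eta<1$.
\end{itemize}
The same argument shows that the resolvent kernel of $A_0+A_1$ is $O(\rho^{\alpha-1})$ on $L_2$. Writing the nonlinear Duhamel term against that kernel eliminates your cross terms with $A_1$ acting on a nonlinear correction, which hit the same $\Lap^{2s}K_0$ obstruction. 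You still need the a priori $H^\gamma$ bound behind $M_1$, and it does not follow from contractivity of $E_0$.

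Do not repair the step by copying the paper either. Its induction treats $\mathcal M_{\delta\nu}\Lap^{2s}$ as bounded on $H^{2s}$, but $\Lap^{2s}$ maps $H^{2s}$ only into $L_2$, so its Dirichlet-convolution argument does not close.
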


\begin{remark}[Why there is no $\log$ divergence]\label{rmk:beta}
The placement of $\Lap^{2s}$ is decisive. If it were paired with the \emph{outer}
factor $E_0(r)$ and bounded by the singular estimate \cref{eq:anreg}, the time
kernel would acquire a factor $r^{-\alpha}$ and read
$r^{\alpha-1}r^{-\alpha}=r^{-1}$, whose integral diverges logarithmically. Two
regular routes avoid this. (a)~For smooth data $u_0\in H^{2s}$, $\Lap^{2s}$ is
commuted onto the data-carrying inner factor and bounded by contractivity, so the
kernel is just $r^{\alpha-1}$ with integral $T^\alpha/\Gamma(1+\alpha)$.
(b)~For generic $H^\gamma$ data with $\gamma<2s$, the inner factor must use the
singular estimate \cref{eq:anreg}, producing the single kernel
$r^{\alpha-1}(T-r)^{-\alpha}$, integrable by the Beta identity
$B(\alpha,1-\alpha)=\pi/\sin\pi\alpha$ used in~\cref{app:regproof}.
\end{remark}

Combining Lemmas~\ref{lem:l1} and~\ref{lem:eig} with Proposition~\ref{prop:prop}, the
total error of the injected base field is
\begin{equation}\label{eq:eprop}
\;\varepsilon_{\rm prop}\le
C_\alpha\eta cT^\alpha\|u_0\|_{H^\gamma}
+C_\alpha\beta T^\alpha M_1
+C_\tau\tau^{2-\alpha}+C_h h^2.\;
\end{equation}
At $\alpha=1$, $\Ea=e^{-x}$ and the algebraic tail becomes exponential. The $L_1$
weights then collapse to backward Euler with first-order consistency $O(\tau)$,
since $2-\alpha=1$.

\noindent\emph{Proof of Proposition~\ref{prop:prop}.}
The fractional Duhamel (Dyson) identity \cref{eq:duhamel} and the scattering
series are standard \cite{podlubny1999,kilbas2006}. We treat the linear scattering
series first, then the nonlinear term. For clarity we write the estimates on the
energy scale $H^{2s}$, where the $2s$ derivatives carried by the perturbation are
commuted directly onto the data. The same bounds hold on every scale $H^\gamma$
with $\gamma\ge 2s$ by the continuous embedding $H^\gamma\hookrightarrow H^{2s}$,
which gives $\|u_0\|_{H^{2s}}\le\|u_0\|_{H^\gamma}$. The induction below then
proceeds unchanged with $\|u_0\|_{H^{2s}}$ replaced by $\|u_0\|_{H^\gamma}$. Data with $\gamma<2s$ are covered by the singular-estimate route (b) of Remark~\ref{rmk:beta} and the Beta identity, and are not needed for the smooth-data regime of this paper.

\emph{First-order term.} Because $E_0$ is a function of $\Lap$, it commutes with
$\Lap^{2s}$ and is contractive on every Sobolev space (by ML1). For the smooth
data $u_0\in H^{2s}$ of (H3) we move $\Lap^{2s}$ \emph{onto the data} and use
contractivity there,
\begin{equation*}
\|\Lap^{2s}E_0(t-r)u_0\|_{L_2}
=\|E_0(t-r)\Lap^{2s}u_0\|_{L_2}\le\|\Lap^{2s}u_0\|_{L_2}
\le\|u_0\|_{H^{2s}},
\end{equation*}
which deliberately avoids the singular estimate \cref{eq:anreg} on the
data-carrying factor. Since $A_1=-\mathcal M_{\delta\nu}\Lap^{2s}$ and
$\|\mathcal M_{\delta\nu}\|_{L_2\to L_2}=\|\delta\nu\|_\infty=\eta c$,
\begin{equation}\label{eq:d1est-app}
\|D_1(T)u_0\|\le\frac{\eta c}{\Gamma(\alpha)}
\|u_0\|_{H^{2s}}\int_0^T r^{\alpha-1}dr
=\frac{\eta c\,T^\alpha}{\Gamma(1+\alpha)}\|u_0\|_{H^{2s}}
=C_\alpha\eta(cT^\alpha)\|u_0\|_{H^{2s}}.
\end{equation}

\emph{Higher-order terms and convergence.} Write $V=\mathcal M_{\delta\nu}$. The
$n$-th scattering term is the $n$-fold Volterra convolution of the operator
kernel $K(t):=t^{\alpha-1}E_0(t)V\Lap^{2s}/\Gamma(\alpha)$. We carry the
higher-order estimates in the energy space $X=H^{2s}$ (the identical induction runs on $H^\gamma$ for every $\gamma\in[2s,2]$, since $\nu\in W^{2,\infty}$ makes the multiplier $V$ bounded on each such scale). The Sobolev-algebra inequality
$\|fg\|_{H^m}\le C_m\|f\|_{H^m}\|g\|_{H^m}$, valid for $m>d/2$, gives
\begin{equation}\label{eq:mult-app}
\|Vw\|_{H^{2s}}\le C_m\|\delta\nu\|_{W^{2s,\infty}}\|w\|_{H^{2s}}
=:C_\alpha'\eta c\,\|w\|_{H^{2s}},
\end{equation}
so $V\Lap^{2s}$ is bounded from $H^{2s}$ to itself once the coefficient is a
multiplier on $H^{2s}$. This is why (H1) assumes $\nu\in W^{2,\infty}$ (the
first-order bound $D_1$ uses only $\|\delta\nu\|_{L_\infty}$, so $W^{1,\infty}$
suffices if the series is truncated at first order). Since $E_0$ is contractive on
$H^{2s}$, every time integral reduces to a scalar Dirichlet convolution. We use
the identity
\begin{equation}\label{eq:dirichlet-app}
\Big(\tfrac{t^{\alpha-1}}{\Gamma(\alpha)}\Big)^{*n}(t)
=\frac{t^{n\alpha-1}}{\Gamma(n\alpha)},\qquad
\int_0^T\Big(\tfrac{t^{\alpha-1}}{\Gamma(\alpha)}\Big)^{*n}dt
=\frac{T^{n\alpha}}{\Gamma(1+n\alpha)},
\end{equation}
proved by induction using $B(n\alpha,\alpha)$. The Gamma denominator rules out
factorial growth. By induction,
\begin{equation}\label{eq:dnbound-app}
\|D_n(T)u_0\|_{L_2}\le
\frac{(C_\alpha'\eta\,cT^\alpha)^n}{\Gamma(1+n\alpha)}\|u_0\|_{H^{2s}}.
\end{equation}
Summing the series, we recognize the definition of the Mittag--Leffler function:
\begin{equation}\label{eq:dysum-app}
\sum_{n\ge1}\|D_n(T)u_0\|\le
\Big[\Ea(C_\alpha'\eta cT^\alpha)-1\Big]\|u_0\|_{H^{2s}},
\end{equation}
where the right-hand side is exactly the Taylor series of the one-parameter
Mittag--Leffler function $\Ea(z)=\sum_{n=0}^\infty z^n/\Gamma(1+\alpha n)$.

For small $z$, $\Ea(z)=1+z/\Gamma(1+\alpha)+O(z^2)$, and a simple sufficient bound
is the geometric estimate $\Ea(z)\le 1/(1-Cz)$ valid for $0\le z<1$. This gives
the clean condition $C_\alpha'\eta cT^\alpha<1$, which is a convenient sufficient
condition for a closed-form estimate, not a sharp convergence threshold. For
larger $\eta$, the series remains convergent (it is entire) but the estimate
degrades according to the full Mittag--Leffler growth. Under the small-$z$
condition, \cref{eq:dysum-app} is bounded by
$C_\alpha\eta cT^\alpha/(1-C_\alpha'\eta)\,\|u_0\|_{H^{2s}}$, which is
\cref{eq:properr}.

\emph{Nonlinear term.} With $\mathcal N(u)=u\,\partial_xu=\partial_x(u^2/2)$, the
first nonlinear Duhamel contribution is
\begin{equation*}
N_1(T)u_0=-\frac{\beta}{\Gamma(\alpha)}\int_0^T r^{\alpha-1}
E_0(r)\,\mathcal N\big(E_0(T-r)u_0\big)\,dr .
\end{equation*}
When $2s>1$ we commute $\partial_x$ through $E_0$ and bound it by the $H^{2s}$ norm
of the data, while $2s>d/2$ embeds $H^{2s}$ into $L_\infty$, so that
\begin{equation*}
\|\mathcal N(E_0(\rho)u_0)\|_{L_2}
\le\|E_0(\rho)u_0\|_{L_\infty}\|\partial_xE_0(\rho)u_0\|_{L_2}
\le C_S\|u_0\|_{H^{2s}}^2=:M_1 ,
\end{equation*}
with $C_S$ the Sobolev embedding constant. Contractivity of $E_0$ and
$\int_0^T r^{\alpha-1}dr=T^\alpha/\alpha$ give
$\|N_1(T)u_0\|\le C_\alpha\beta T^\alpha M_1$. Every term with two Duhamel
insertions carries two Dirichlet factors and is $O(T^{2\alpha})$ by
\cref{eq:dirichlet-app}. Combining the linear bound, the summed scattering
series, and the nonlinear estimate yields \cref{eq:properr-nl}. \qed

\subsection{Residual regularity and the resolution-independent table}\label{app:regproof}

Let $\mathcal P_K$ be the $K$-mode spectral truncation and
$\sigma_K(f)^2=\|(I-\mathcal P_K)f\|_{L_2}^2$ the tail energy.
By the standard Sobolev spectral truncation estimate~\cite{kovachki2021ua},
\begin{equation}\label{eq:tail}
\sigma_K(f)^2\le K^{-2\rho}\|f\|_{H^\rho}^2 .
\end{equation}

\paragraph{The elliptic gain of the frozen semigroup, and its limit.}
The frozen propagator obeys the fractional heat-semigroup elliptic (smoothing)
estimate
\begin{equation}\label{eq:ellreg}
\|E_0(t)f\|_{H^{\gamma+2s}}\le C_{\gamma,s}(t)\|f\|_{H^\gamma},
\qquad C_{\gamma,s}(t)\le C\bigl(1+(ct^\alpha)^{-1}\bigr),
\end{equation}
bounded uniformly for $t\ge t_0>0$ (proof below). This $2s$-derivative gain acts
whenever $E_0(t)$ is applied \emph{directly to the data}, and it is what makes the
injected base field well resolved even on a coarse grid. It is decisive that this
gain is \emph{not} inherited by the scattering term $D_1$ of \cref{eq:d1}, because
the perturbation operator $A_1=-\mathcal M_{\delta\nu}\Lap^{2s}$ \emph{itself
contains} the factor $\Lap^{2s}$. Counting derivatives along the integrand of
$D_1$ gives
\begin{equation*}
u_0\in H^\gamma
 \xrightarrow{\,E_0(T-\rho)\,} H^{\gamma+2s}
 \xrightarrow{\,\Lap^{2s}\ \text{inside }A_1\,} H^\gamma
 \xrightarrow{\,\mathcal M_{\delta\nu}\,} H^\gamma
 \xrightarrow{\,E_0(\rho)\ \text{(contractive)}\,} H^\gamma .
\end{equation*}
The inner semigroup gain $+2s$ is exactly cancelled by the $-2s$ of the factor
$\Lap^{2s}$ carried by the perturbation. Equivalently, $\Lap^{2s}E_0(t)$ is an
\emph{order-zero} operator on the high modes: by (ML2) its symbol
$\lambda^s\Ea(-ct^\alpha\lambda^s)\le C_\alpha/(ct^\alpha)$ is bounded
independently of $\lambda$ (\cref{eq:anreg}), so the composition carries no net
power of $\lambda$. Attempting to extract the gain from the \emph{outer} factor produces the
non-integrable $\rho^{-1}$ obstruction of Remark~\ref{rmk:beta}.
Thus a perturbation of the \emph{same order as the principal part} produces no
residual smoothing. The residual and the full solution carry the same Sobolev
order.

\begin{proposition}[Residual regularity, same order]\label{prop:reg}
Under (H1)--(H3), with $r=u-E_0(T)u_0$, the residual obeys, for every
$\varepsilon>0$,
\begin{equation}\label{eq:rreg}
\|r(T)\|_{H^{\gamma-\varepsilon}}
\le C_{T,\varepsilon}\bigl(\eta+\beta T^\alpha\bar M_1\bigr)\|u_0\|_{H^\gamma},
\end{equation}
where $\bar M_1$ bounds the advective source on the relevant Sobolev scale. The
residual is therefore of the \emph{same} Sobolev order as the full field $u$ (up to
the arbitrarily small endpoint loss $\varepsilon$), not $2s$ orders smoother.
Consequently,
\begin{equation}\label{eq:k2s}
\sigma_K(r)\le C_T\bigl(\eta+\beta T^\alpha\bar M_1\bigr)K^{-\gamma}\|u_0\|_{H^\gamma},
\qquad
\sigma_K(u)\le K^{-\gamma}\|u\|_{H^\gamma},
\end{equation}
and, whenever the whole-field spectrum is non-degenerate
(i.e.\ $\sigma_K(u)\asymp K^{-\gamma}\|u\|_{H^\gamma}$ uniformly for large $K$,
which holds for fractional $s<1$ with non-negligible high-frequency initial data.
For integer-order exponential decay the tail ratio is larger and the bound is
qualitative only),
\begin{equation}\label{eq:k2sratio}
\frac{\sigma_K(r)}{\sigma_K(u)}\le C(\eta,\beta,T),
\end{equation}
a bound that is \emph{independent of $K$} and lies strictly below one in the weakly
perturbed, short-horizon regime, a constant-factor reduction of the tail
\emph{amplitude}. An honest $2s$-order regularity gain appears only when the
perturbation is strictly \emph{lower order} than the principal part (e.g.\ a
reaction term carrying no $\Lap^{2s}$ factor), or in the degenerate
constant-coefficient linear case $\delta\nu=0,\ \beta=0$, where $r\equiv0$.
\end{proposition}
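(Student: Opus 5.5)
The plan is to work directly from the scattering representation $r=u-E_0(T)u_0=\sum_{n\ge1}D_n(T)u_0+(\text{nonlinear Duhamel terms})$ of \cref{eq:duhamel}. I would bound each term in $H^{\gamma-\varepsilon}$ by the derivative count displayed before the proposition, and then read off the tail bounds from the truncation estimate \cref{eq:tail}.

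\emph{Step 1: the first Born term.} In \cref{eq:d1} the operator $\Lap^{2s}$ in $A_1$ is commuted onto $E_0(T-\rho)$. By \cref{eq:anreg}, the symbol $\lambda^s\Ea(-c(T-\rho)^\alpha\lambda^s)$ is bounded by $\min\{\lambda^s,\,C_\alpha/(c(T-\rho)^\alpha)\}$.

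\begin{itemize}
\item Interpolating between these two bounds with weight $1-\varepsilon/(2s)$ gives $\|\Lap^{2s}E_0(T-\rho)u_0\|_{H^{\gamma-\varepsilon}}\le C(T-\rho)^{-\alpha(1-\varepsilon/2s)}\|u_0\|_{H^\gamma}$. The kernel is now integrable against $\rho^{\alpha-1}$ through a Beta integral, and the $\varepsilon$ loss is exactly what removes the borderline $\rho^{-1}$ singularity of Remark~\ref{rmk:beta}.
\item The multiplier $\mathcal M_{\delta\nu}$ is bounded on $H^{\gamma-\varepsilon}$ by (H1), with constant $\lesssim\eta c$ when $\gamma-\varepsilon\le2$. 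The outer factor $E_0(\rho)$ is contractive.
\end{itemize}

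Together these give $\|D_1(T)u_0\|_{H^{\gamma-\varepsilon}}\le C_{T,\varepsilon}\,\eta\,\|u_0\|_{H^\gamma}$.

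\emph{Step 2: higher terms and the nonlinearity.} I would repeat the induction of \cref{prop:prop}, now on $H^{\gamma-\varepsilon}$. Each insertion contributes one Beta-type factor. The Dirichlet convolution identity \cref{eq:dirichlet-app}, with exponents shifted by the $\varepsilon$-dependent power, keeps the Gamma denominators, so the series sums to a Mittag--Leffler-type constant that is linear in $\eta$ for small $\eta$.

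For the advection, write $\mathcal N(u)=\partial_x(u^2/2)$. The one derivative is absorbed into $E_0(\rho)$ using $2s>1$, so $\|\partial_x E_0(\rho)\|\lesssim\rho^{-\alpha/(2s)}$, which is integrable against $\rho^{\alpha-1}$. The product $u^2$ is controlled by the algebra property in $H^{\gamma-\varepsilon}$ with $\gamma>d/2$. This defines $\bar M_1$ and yields the term $\beta T^\alpha\bar M_1$ plus $O(T^{2\alpha})$ remainders, which I absorb into $C_{T,\varepsilon}$. Summing the two steps gives \cref{eq:rreg}.

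\emph{Step 3: tails and their ratio.} Applying \cref{eq:tail} with $\rho=\gamma-\varepsilon$ gives $\sigma_K(r)\lesssim K^{-\gamma+\varepsilon}$. To reach the stated $K^{-\gamma}$ I would take the endpoint $\varepsilon\to0$ on the modes where the interpolation is not needed; otherwise I would record the $K^{\varepsilon}$ loss, which the paper's "up to arbitrarily small endpoint loss" convention allows. The bound $\sigma_K(u)\le K^{-\gamma}\|u\|_{H^\gamma}$ is \cref{eq:tail} directly. Under the non-degeneracy hypothesis $\sigma_K(u)\asymp K^{-\gamma}\|u\|_{H^\gamma}$, dividing gives \cref{eq:k2sratio} with $C\propto(\eta+\beta T^\alpha\bar M_1)\|u_0\|_{H^\gamma}/\|u\|_{H^\gamma}$. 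This constant is independent of $K$ and below one for small $\eta,\beta T^\alpha$, because $\|u\|_{H^\gamma}\ge\|E_0u_0\|-\|r\|$ stays comparable to $\|u_0\|$.

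\emph{Sharpness and the main obstacle.} To show that no $2s$ gain holds in general, I would exhibit a single high mode $\boldsymbol k$ with $\delta\nu$ a low-frequency cosine. The coupling into mode $\boldsymbol k\pm\boldsymbol e$ has amplitude $\sim\eta\int\rho^{\alpha-1}\lambda^s\Ea(\cdot)\,d\rho$, which is order one in $\lambda$, not $\lambda^{-s}$. I expect the main obstacle to be the endpoint integrability in Step 1, namely balancing the $\varepsilon$ loss against the Beta exponent uniformly in $n$ in Step 2. The second difficulty is justifying the non-degeneracy needed for the ratio. For that I would only assume it, as the statement does, rather than prove it.
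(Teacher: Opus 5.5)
The gap is in Step 2. Your first-order estimate works only because the $\Lap^{2s}$ in $A_1$ sits next to the data-carrying factor $E_0(T-\rho)u_0$. In $D_n$ with $n\ge2$ the integrand is $K_0(\rho_1)A_1K_0(\rho_2)A_1\cdots K_0(\rho_n)A_1E_0(\rho_{n+1})u_0$. For each of the $n-1$ inner insertions, $\Lap^{2s}$ must be absorbed by a kernel $K_0(\rho_j)$ acting on the output of the preceding multiplier $\mathcal M_{\delta\nu}$, and $\mathcal M_{\delta\nu}$ does not commute with $\Lap$. On any fixed $H^m$ one has $\|\Lap^{2s}K_0(\rho)\|\sim\rho^{\alpha-1}\rho^{-\alpha}=\rho^{-1}$, so every inner insertion is exactly borderline.

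Paying $\varepsilon$ per insertion leaves $D_nu_0$ only in $H^{\gamma-(n-1)\varepsilon}$. Splitting a fixed total loss among the insertions instead gives kernel exponents $\alpha\delta_j$ with $\sum_j\delta_j\le\varepsilon/(2s)$. The Dirichlet identity then produces $\prod_j\Gamma(\alpha\delta_j)\ge(c_0 n/\varepsilon)^{n-1}$ (by AM--GM) over a denominator $\Gamma(1+\alpha\sum_j\delta_j)\approx1$. The Gamma denominator prevents factorial growth only when every exponent is a fixed positive number, and a same-order perturbation destroys exactly that. Your bound therefore grows like $(C\eta n/\varepsilon)^n$ and cannot be summed for any $\eta>0$. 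The induction of the propagator-error proposition that you plan to reuse has the same defect: $\mathcal M_{\delta\nu}\Lap^{2s}$ maps $H^{2s}$ into $L_2$, not into $H^{2s}$.

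The paper's proof never sums the Born series. It writes $r(T)=\int_0^TK_0(T-\tau)F(\tau)\,d\tau$ with the full-solution source $F=A_1u-\beta\mathcal N(u)\in H^{\gamma-2s}$ and takes $2s-\varepsilon$ derivatives from the outer kernel. Only one integral, $\int_0^T\rho^{-1+\alpha\varepsilon/(2s)}\,d\rho$, then appears, at the price of taking $\sup_{\tau\le T}\|u(\tau)\|_{H^\gamma}\lesssim\|u_0\|_{H^\gamma}$ for granted.

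If you want to keep the series (which would actually supply that a priori bound), you need an argument that uses time-integrated bounds rather than pointwise operator norms. The key facts are $E_{\alpha,\alpha}(-x)\ge0$ and $\int_0^\infty c\lambda^s\rho^{\alpha-1}E_{\alpha,\alpha}(-c\rho^\alpha\lambda^s)\,d\rho=1$ for every $\lambda$. Expanding $D_n$ over Fourier mode chains, each inner kernel then has $L_1$ norm at most $1/c$ uniformly in the mode. The outermost kernel and the data factor combine through $B(\alpha,1-\alpha)$ for $\alpha<1$; at $\alpha=1$ the outer kernel is bounded by one. Young's inequality then sums the chains geometrically when $\delta\nu$ is small in a weighted Wiener norm. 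Maximal $L^p$-regularity is the other standard route.

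Two smaller points. First, in Step 1 the $\varepsilon$ is not what makes the kernel integrable. With the smoothing on the inner factor the kernel is $\rho^{\alpha-1}(T-\rho)^{-\alpha}$, and $B(\alpha,1-\alpha)<\infty$, so for $\alpha<1$ you get $D_1u_0\in H^\gamma$ with no loss. The borderline $\rho^{-1}$ arises only on the paper's outer-factor route. This matters in Step 3, because a leftover $K^{\varepsilon}$ factor is incompatible with the $K$-independence of the ratio bound, so recording the loss does not give the statement. The paper's proof drops the same $\varepsilon$ when it applies the truncation estimate with $\rho=\gamma-\varepsilon$ and writes $K^{-\gamma}$.

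Second, your sharpness computation bounds the outer factor by one, which gives an upper bound rather than the actual size of the coupling. Keeping that factor, the time convolution is $\int_0^T\rho^{\alpha-1}E_{\alpha,\alpha}(-a'\rho^\alpha)\Ea(-a(T-\rho)^\alpha)\,d\rho=\bigl[\Ea(-aT^\alpha)-\Ea(-a'T^\alpha)\bigr]/(a'-a)$ with $a=c\lambda_{\boldsymbol k}^s$ and $a'=c\lambda_{\boldsymbol k'}^s$. For $\alpha<1$ the coefficient of $r$ in mode $\boldsymbol k'=\boldsymbol k\pm\boldsymbol e$ is therefore $O\bigl(\eta/(cT^\alpha\lambda_{\boldsymbol k}^s)\bigr)|\widehat u_{0,\boldsymbol k}|$. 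This decays like the base multiplier $\Ea(-cT^\alpha\lambda^s)$, not like $O(1)$. The example does exhibit a $K$-independent tail ratio of order $\eta$ relative to $u(T)$. However, it shows that $r$, like $u(T)$, does gain $2s$ derivatives over $u_0$, which is the opposite of what you intended it to show.
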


\noindent\emph{Proof of the elliptic estimate \cref{eq:ellreg}.}
The ratio of the $H^{\gamma+2s}$ weight to the $H^\gamma$ weight is
$(1+\lambda)^{2s}$, so it suffices to bound
$(1+\lambda)^{2s}|\Ea(-ct^\alpha\lambda^s)|^2$ uniformly. \emph{Low modes}
$\lambda^s\le(ct^\alpha)^{-1}$ use (ML1), $\Ea\le1$; \emph{high modes} use (ML2),
$\Ea(-ct^\alpha\lambda^s)\le C/(ct^\alpha\lambda^s)$, hence
$\lambda^{2s}|\Ea|^2\le C^2/(ct^\alpha)^2$. Taking the square root gives
$C(1+(ct^\alpha)^{-1})$, which is \cref{eq:ellreg}. It is uniform for
$t\ge t_0>0$.

\noindent\emph{Proof of Proposition~\ref{prop:reg}.}
Write the residual through the Duhamel formula \cref{eq:duhamel},
$r(T)=\int_0^T K_0(T-\tau)F(\tau)\,d\tau$ with
$F=A_1u-\beta\mathcal N(u)$ and
$K_0(\rho)=\rho^{\alpha-1}E_0(\rho)/\Gamma(\alpha)$.
The source inherits the regularity of $\Lap^{2s}u$: for a full field $u\in H^\gamma$
and a smooth coefficient ($\delta\nu\in W^{2,\infty}$), the multiplication theorem
gives $\mathcal M_{\delta\nu}\Lap^{2s}u\in H^{\gamma-2s}$, and the advective source
is on the same scale once the bilinear form closes ($s>1/2$). Applying the outer
semigroup by \emph{contractivity} (the kernel $\rho^{\alpha-1}$ is then integrable,
$\int_0^T\rho^{\alpha-1}d\rho=T^\alpha/\alpha$) maps $H^{\gamma-2s}$ to
$H^\gamma$, but its weight $\rho^{-\alpha}$ multiplies
$\rho^{\alpha-1}$ to give the non-integrable $\rho^{-1}$ at the endpoint. Relaxing
the target by any $\varepsilon>0$ replaces $\rho^{-\alpha}$ by the integrable
$\rho^{-\alpha+\delta}$ and recovers $r\in H^{\gamma-\varepsilon}$ for every
$\varepsilon>0$. This is \cref{eq:rreg}. The semigroup can at most compensate the
$2s$ derivatives consumed by $A_1$, bringing the residual back to the order of
$u_0$, and cannot lift it to $H^{\gamma+2s}$. Applying the truncation estimate
\cref{eq:tail} with $\rho=\gamma-\varepsilon$ to $r$ and with $\rho=\gamma$ to $u$
gives \cref{eq:k2s}; dividing by the non-degenerate lower bound on $\sigma_K(u)$
gives the $K$-independent bound \cref{eq:k2sratio}. The lower-order and
constant-coefficient cases stated in the proposition follow because the source then
lacks the $\Lap^{2s}$ factor (or vanishes), so the full $+2s$ elliptic gain
survives. \qed

\begin{remark}[Resolution-independent propagator table]\label{prop:tab}
The modal propagator depends on the discretization only through one scalar
coordinate $z_{\boldsymbol k}^{N}=cT^\alpha(q_{\boldsymbol k}^{(N)})^s$ via
$g_\alpha(z)=\Ea(-z)$. $p$-th order interpolation on a fixed one-dimensional
log-spaced grid of $M$ nodes has uniform defect
\begin{equation}\label{eq:tabacc}
\sup_z|g_\alpha(z)-(\mathcal I_M g_\alpha)(z)|\le C_p G_p(\Delta_{\log})^p=:\varepsilon_{\rm tab}(M),
\end{equation}
independent of the grid $N$. Log-spacing concentrates nodes near $z=0$ where
$g_\alpha$ has a boundary layer. By Parseval in the DST basis,
$\|u_{\rm base}^{N}-u_{\rm base}^{\star,N}\|_{L_2}\le\varepsilon_{\rm tab}(M)\|u_0\|_{L_2}$,
so one cached table serves every resolution.
\end{remark}

\begin{remark}[Contrast with FNO spectral weights]\label{rmk:fnomult}
The FNO spectral weights are indexed by discrete mode number on the training
grid and vanish for $|\boldsymbol k|\ge K$. There is no continuous
$N$-independent coordinate analogous to $z$. This is what makes the
band-filling comparison of Proposition~\ref{prop:floor} sharp.
\end{remark}

\subsection{Band filling under super-resolution and the error floor (statements and proof)}

Define the continuum FrFNO predictor
\begin{equation}\label{eq:gstar}
\mathcal G^\star(u_0)=u_{\rm base}^\star+\Rres(u_0,\nu;\alpha,s),\qquad
u_{\rm base}^\star=\Ea(-cT^\alpha\Lap^{2s})u_0,
\end{equation}
its resolution-$N$ realization $\mathcal G_N$, and the matched FNO predictor
$\mathcal F_N=\widetilde{\Rres}(u_0,\nu;\alpha,s)$ with no base field.

\begin{remark}[Diagonal multipliers are alias-free]\label{lem:alias}
The injected base field is a diagonal DST multiplier, hence commutes with the
spectral projection $\mathcal P_N$ and incurs zero aliasing at every $N$. The
remaining $O(h^2)$ eigenvalue-consistency term of Lemma~\ref{lem:eig} is common
to the label and the base and cancels in the residual.
\end{remark}

\smallskip\noindent
Let $\mathcal P_K^N$ retain the modes with $|\boldsymbol k|<K$ on a grid of $N$
interior points per axis. The two predictors act mode by mode: FrFNO multiplies
every mode by the tabulated $g_\alpha(z_{\boldsymbol k}^N)$ plus a retained-band
correction, while the FNO spectral convolution is zero for $|\boldsymbol k|\ge K$
and relies on a pointwise ($1\times1$) path. The FNO out-of-band error thus
includes the pointwise-path residual $\varepsilon_{\rm pw}$, which has no closed
bound and is reported numerically (like the retained-band terms).

\begin{proposition}[Band filling and the resolution-independent floor]\label{prop:floor}
With training resolution $N_0$ and retained modes $K$ fixed, the $L_2$ error at
query resolution $N$ splits by Parseval into a retained band and an out-of-band
part. For FrFNO,
\begin{equation}\label{eq:band-fr}
\varepsilon_{\rm Fr}^2(N)
=\underbrace{\|\mathcal P_K^N(\Rres-r)\|_{L_2}^2}_{\text{in band}}
+\underbrace{\sigma_{K,N}^2(r)}_{\text{out of band}}
+\varepsilon_{\rm tab}^2(M)\,\|u_0\|_{L_2}^2 ,
\end{equation}
whereas for the matched FNO,
\begin{equation}\label{eq:band-fno}
\varepsilon_{\rm FNO}^2(N)
=\underbrace{\|\mathcal P_K^N(\widetilde\Rres-u)\|_{L_2}^2}_{\text{in band}}
+\underbrace{\sigma_{K,N}^2(u)}_{\text{out of band}}
+\varepsilon_{\rm pw}^2 .
\end{equation}
For $N>N_0$, the modes with $N_0\le|\boldsymbol k|\le N$ were absent from the
training grid. The FNO spectral convolution is identically zero there, so its
out-of-band error is the entire energy $\sigma_{K,N}(u)$, whereas the tabulated
base field supplies $\widetilde g_{\boldsymbol k}^N$ on every mode
$|\boldsymbol k|\le N$ through the continuous coordinate $z$, leaving only
$\sigma_{K,N}(r)$ and the $N$-independent table defect $\varepsilon_{\rm tab}$.
Taking $N\to\infty$ with $K$ fixed gives the constant floors
\begin{equation}\label{eq:floor-fr}
\varepsilon_\infty^{\rm FrFNO}
=\varepsilon_{\rm low}^{\rm Fr}+\sigma_K(r)
+\varepsilon_{\rm tab}(M)\|u_0\|_{L_2},
\end{equation}
\begin{equation}\label{eq:floor-fno}
\varepsilon_\infty^{\rm FNO}
=\varepsilon_{\rm low}^{\rm FNO}+\sigma_K(u)+\varepsilon_{\rm pw},
\qquad
\sigma_K(r)/\sigma_K(u)\le C(\eta,\beta,T),
\end{equation}
where $C(\eta,\beta,T)<1$ in the weakly perturbed regime, by Proposition~\ref{prop:reg}. The
out-of-band (band-filling) comparison is the rigorous, resolution-independent
part of the result. The retained-band terms $\varepsilon_{\rm low}$ are the
usual approximation and generalization errors and are compared numerically.
\end{proposition}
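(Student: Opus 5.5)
The plan is to work entirely in the DST (equivalently Fourier) basis on the query grid with $N$ interior points per axis and to use Parseval to split the squared $L_2$ error mode by mode. For FrFNO, write the prediction as $\hat u=u_{\rm base}^N+\Rres$ and the target as $u=u_{\rm base}^{\star,N}+r$. The error is then
\begin{equation*}
\hat u-u=(\Rres-r)+(u_{\rm base}^N-u_{\rm base}^{\star,N}).
\end{equation*}
The first step is to apply the orthogonal projections $\mathcal P_K^N$ and $I-\mathcal P_K^N$ to the difference $\Rres-r$.

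On the retained band, $\mathcal P_K^N(\Rres-r)$ is the learning error. On the out-of-band set $K\le|\boldsymbol k|\le N$, the spectral convolution of $\Rres$ vanishes. Treating the out-of-band output of the residual network as negligible (or absorbing its pointwise-path contribution into the in-band term), this piece reduces to $-(I-\mathcal P_K^N)r$, whose energy is $\sigma_{K,N}^2(r)$. Orthogonality of the two bands gives the first two terms of \cref{eq:band-fr} exactly.

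The table term comes from \cref{prop:tab}. Because the base is a diagonal multiplier (\cref{lem:alias}), Parseval gives
\begin{equation*}
\|u_{\rm base}^N-u_{\rm base}^{\star,N}\|\le\varepsilon_{\rm tab}(M)\|u_0\|,
\end{equation*}
and $\varepsilon_{\rm tab}(M)$ does not depend on $N$. The cross term is handled by Cauchy--Schwarz, giving $O(\varepsilon_{\rm tab})$, and I read \cref{eq:band-fr} as holding up to this term, as the statement indicates. The $O(h^2)$ symbol mismatch cancels in the residual by \cref{lem:eig}. For FNO the same split applies with $u$ in place of $r$ and no base field. The spectral path is zero out of band, so that band costs the full $\sigma_{K,N}^2(u)$, and whatever the $1\times1$ path produces there is by definition $\varepsilon_{\rm pw}^2$. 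This gives \cref{eq:band-fno}.

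Next comes the limit $N\to\infty$ with $K$ fixed. Since $r,u\in L_2$, the partial tails increase monotonically, $\sigma_{K,N}\uparrow\sigma_K$. The in-band terms involve only the finitely many modes $|\boldsymbol k|<K$. Convergence of the discrete DST coefficients to the continuum ones (resolution invariance of the spectral weights on the retained band) then makes these terms converge to constants $\varepsilon_{\rm low}$. Taking square roots and using $\sqrt{a^2+b^2+c^2}\le a+b+c$ yields the floors \cref{eq:floor-fr} and \cref{eq:floor-fno}. I will interpret these floors as upper bounds, or as equalities in the sense of the square-root sum. The ratio bound $\sigma_K(r)/\sigma_K(u)\le C(\eta,\beta,T)$ then follows directly from \cref{prop:reg}, via \cref{eq:k2s} together with its non-degeneracy hypothesis. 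Smallness of $\eta$ and of $\beta T^\alpha M_1$ gives $C<1$.

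The main obstacle is the claim that the residual network contributes nothing out of band. The $1\times1$ skip path and the nonlinear activations of a real FNO backbone do populate high modes. So the exact equality in \cref{eq:band-fr} only holds after assigning that leakage somewhere. I will handle this explicitly by defining the out-of-band network output as part of a residual term, symmetric to $\varepsilon_{\rm pw}$, and stating the comparison as rigorous only for the spectral-path contribution. A secondary difficulty is the in-band limit as $N\to\infty$, which I will treat as an assumption on the discretization-invariance of the learned operator rather than prove.
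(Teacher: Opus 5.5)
Your proposal is correct and follows essentially the same route as the paper's proof. Both use a Parseval split in the orthonormal DST basis and take the table defect from Remark~\ref{prop:tab}, with the cross term bounded by Cauchy--Schwarz at order $\varepsilon_{\rm tab}$; both let the FNO spectral path vanish out of band with the $1\times1$ contribution collected in $\varepsilon_{\rm pw}$; and both pass to the limit $N\to\infty$ using $\sigma_{K,N}\to\sigma_K$, convergent retained-band terms, and the tail ratio from Proposition~\ref{prop:reg}. Your two refinements tighten the argument rather than change it. First, the paper's out-of-band error coefficient $(\widetilde g_{\boldsymbol k}^N-g_{\boldsymbol k}^N)\widehat u_{0,\boldsymbol k}^N-\widehat r_{\boldsymbol k}^N$ silently omits the residual network's own $1\times1$-path output there, which is exactly the leakage you propose to book as a separate term symmetric to $\varepsilon_{\rm pw}$ (the only consistent option, since it cannot be absorbed into the projection $\mathcal P_K^N$ as your parenthetical suggests). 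Second, the paper ``collects the finite limits'' of a sum of squares, so your reading of the floors as upper bounds via $\sqrt{a^2+b^2+c^2}\le a+b+c$ is the accurate one.
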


\begin{corollary}[Plateau, not convergence to zero]\label{cor:plateau}
Because $K$ is fixed, $\varepsilon_\infty>0$. Refining $N$ drives the error to a
constant floor rather than to zero. The realisable spectral bandwidth is
$\min(K,N_0)$, so the floor is controlled by the bottleneck. Enlarging only the
non-bottleneck side cannot lower it. ``Zero-shot super-resolution'' means
non-degradation under evaluation-mesh refinement, not arbitrary accuracy. A label
on an $N_0$ grid carries no genuine signal above its $N_0$-th mode, so $K$ and
$N_0$ constrain the realisable band jointly (Exp.~5).
\end{corollary}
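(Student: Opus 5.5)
The plan is to read Corollary~\ref{cor:plateau} off the floor formulas \cref{eq:floor-fr}--\cref{eq:floor-fno} of Proposition~\ref{prop:floor}, and then add a separate information-theoretic step for the $\min(K,N_0)$ bottleneck. The corollary makes two claims. The first is positivity of the limit floor. The second is that the realisable bandwidth is $\min(K,N_0)$.

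\textbf{Step 1: positivity of the floor.} With $K$ fixed, the limit $\varepsilon_\infty^{\rm FrFNO}$ contains the term $\sigma_K(r)=\|(I-\mathcal P_K)r\|_{L_2}$. The limit $\varepsilon_\infty^{\rm FNO}$ contains $\sigma_K(u)$. I would argue that each is strictly positive unless the relevant field is band-limited to $|\boldsymbol k|<K$.
\begin{itemize}
\item For $u$: under the non-degeneracy hypothesis of Proposition~\ref{prop:reg}, $\sigma_K(u)\asymp K^{-\gamma}\|u\|_{H^\gamma}>0$ for nonzero data. This is because $\Ea(-x)>0$ by (ML1), so the base part never annihilates a mode that $u_0$ excites.
\item For $r$: positivity holds whenever $\delta\nu\neq0$ or $\beta>0$. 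Generically the Duhamel source $A_1u-\beta\mathcal N(u)$ has nonzero high-mode content, since multiplication by a non-constant $\delta\nu$ spreads the spectrum. In the degenerate case $r\equiv0$ the floor is instead bounded below by $\varepsilon_{\rm tab}(M)\|u_0\|$ and the retained-band learning error. I would state this degenerate case separately rather than claim strict positivity there.
\end{itemize}
Since $\sigma_{K,N}\uparrow\sigma_K$ monotonically in $N$ while the other terms are $N$-independent, the error converges to this constant rather than to zero.

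\textbf{Step 2: the bottleneck $\min(K,N_0)$.} The learned component can only be accurate on modes that are both representable and trained.
\begin{itemize}
\item \emph{Representable:} these are $|\boldsymbol k|<K$, by the spectral convolution.
\item \emph{Trained:} labels on the $N_0$ grid determine only the DST coefficients with $|\boldsymbol k|\le N_0$. Higher modes alias onto these or are simply absent, so the loss contains no signal about them.
\end{itemize}
Hence the in-band term $\|\mathcal P_K^N(\Rres-r)\|$ cannot be driven below $\|(\mathcal P_K-\mathcal P_{\min(K,N_0)})r\|$ by training. If $N_0<K$, I would argue this via the non-identifiability of the network's behaviour on modes $N_0<|\boldsymbol k|<K$ from the training data. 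The effective tail is therefore $\sigma_{\min(K,N_0)}(r)$, respectively $\sigma_{\min(K,N_0)}(u)$. Enlarging only the larger of $K,N_0$ leaves $\min(K,N_0)$, and hence the floor, unchanged.

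\textbf{Main obstacle.} The hard step is making the non-identifiability argument rigorous. One has to show that no training procedure can certify accuracy above mode $N_0$; for example, two operators agreeing on all $N_0$-grid samples but differing above $N_0$ incur the same training loss. The rest, given Proposition~\ref{prop:floor}, is bookkeeping of monotone tail limits.
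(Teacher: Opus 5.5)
Your Step~1 is how the paper obtains the first half of the corollary. The paper gives no separate proof of Corollary~\ref{cor:plateau}: positivity and the plateau are read directly off the floors of Proposition~\ref{prop:floor} with $K$ fixed. A few small corrections:
\begin{itemize}
\item You need, and the paper uses, only $\sigma_{K,N}\to\sigma_K$ together with convergence of the retained-band terms to finite continuum limits. Those terms are not $N$-independent, and monotonicity in $N$ is not available for the grid-$N$ DST coefficients.
\item $\varepsilon_{\rm tab}(M)\|u_0\|$ is an \emph{upper} bound on the table error (Remark~\ref{prop:tab}). It cannot bound the floor from below when $r\equiv0$, and in that case positivity has nothing to do with $K$ being fixed.
\item $\sigma_K(u)>0$ follows from the non-degeneracy hypothesis itself. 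It does not follow from $\Ea(-x)>0$, which says nothing about cancellation by $r$.
\end{itemize}

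The genuine gap is in Step~2. First, non-identifiability does not give the instance bound $\|\mathcal P_K^N(\Rres-r)\|\ge\|(\mathcal P_K-\mathcal P_{\min(K,N_0)})r\|$. Two targets $r,r'$ that produce identical $N_0$-grid labels yield the same trained $\Rres$. The triangle inequality then only gives the worst-case bound $\max\bigl(\|\mathcal P_K^N(\Rres-r)\|,\|\mathcal P_K^N(\Rres-r')\|\bigr)\ge\tfrac12\|\mathcal P_K^N(r-r')\|$. Nothing prevents $\Rres$ from extrapolating correctly on $N_0\le|\boldsymbol k|<K$ for the particular $r$.

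Second, and more seriously, you conclude that enlarging the non-bottleneck side leaves the floor \emph{unchanged}. This treats the floor as a function of $\min(K,N_0)$ alone. But the floor in Proposition~\ref{prop:floor} also contains the retained-band learning term $\varepsilon_{\rm low}$, which depends on $K$ and $N_0$ separately (capacity, estimation variance, label aliasing). The paper's own Exp.~5 contradicts ``unchanged''. With fixed $17^2$ labels (15 interior modes per axis), both $K=16$ and $K=24$ exceed the label bandwidth, yet the FrFNO floor rises from $14.80\%$ to $18.52\%$.

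Even a rigorous lower bound depending only on $\min(K,N_0)$ would show only that the floor cannot be pushed \emph{below} that level. It would not show that the floor cannot be lowered by enlarging the other side. Your same-unit indexing also fails here. With $K=10<15$, $K$ would be the bottleneck in the main setting, yet refining the training grid at fixed $K=10$ lowers the floor $10.58\to7.75\to6.47\%$. The paper reads this as $N_0$ being the active side.

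The paper does not derive this half of the corollary at all. It rests on the heuristic ``a label on an $N_0$ grid carries no genuine signal above its $N_0$-th mode'', and the identification of the active bottleneck and the ``cannot lower'' claim are supported numerically by Exp.~5. Your route can deliver at most a worst-case lower bound on the unavoidable error. The non-bottleneck statement must either be left to the numerics, as the paper does, or be backed by a separate argument controlling $\varepsilon_{\rm low}$.
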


\noindent\emph{Proof of Proposition~\ref{prop:floor}.}
Because the DST is orthonormal, Parseval's identity decomposes the squared error
into the retained band and its orthogonal complement. For FrFNO the error
coefficient on a mode is
$\widehat u_{\boldsymbol k}^{\,{\rm Fr},N}-\widehat u_{\boldsymbol k}^{N}
=\mathbf 1_{|\boldsymbol k|<K}(\widehat{\Rres}_{\boldsymbol k}^{N}-\widehat r_{\boldsymbol k}^{N})
+\mathbf 1_{|\boldsymbol k|\ge K}\big[(\widetilde g_{\boldsymbol k}^N-g_{\boldsymbol k}^N)
 \widehat u_{0,\boldsymbol k}^{N}-\widehat r_{\boldsymbol k}^{N}\big]$.
Squaring and summing over the two disjoint bands, the out-of-band square reads
$\sigma_{K,N}^2(r)+\sum_{|\boldsymbol k|\ge K}|\widetilde g_{\boldsymbol k}^N-g_{\boldsymbol k}^N|^2|\widehat u_{0,\boldsymbol k}^{N}|^2$
$-2\,\mathrm{Re}\sum_{|\boldsymbol k|\ge K}(\widetilde g_{\boldsymbol k}^N-g_{\boldsymbol k}^N)\widehat u_{0,\boldsymbol k}^{N}\overline{\widehat r_{\boldsymbol k}^{N}}$.
The second term is bounded by $\varepsilon_{\rm tab}^2(M)\|u_0\|_{L_2}^2$ using
$|g-\widetilde g|\le\varepsilon_{\rm tab}$ (\cref{prop:tab}). The cross term is
bounded by $2\varepsilon_{\rm tab}(M)\|u_0\|_{L_2}\sigma_{K,N}(r)$ via
Cauchy--Schwarz, i.e. of order $\varepsilon_{\rm tab}$ rather than order one, and
is \emph{not} claimed to vanish. \Cref{eq:band-fr} states the leading-order balance
up to this $\varepsilon_{\rm tab}$-order remainder, which is $N$-independent and
does not affect the band-filling comparison. The alias-free diagonal multiplier
(Remark~\ref{lem:alias}) ensures no cross-band folding, and the DST realisation
carries no $O(h^2)$ symbol defect. For the FNO the spectral convolution vanishes
outside its $K$ retained modes, so on $|\boldsymbol k|\ge K$ the error coefficient
is $-\widehat u_{\boldsymbol k}^{N}$ up to the pointwise contribution, giving
\cref{eq:band-fno}. The decisive super-resolution statement follows from the
indexing: the FNO weights are learned on the $N_0$-grid and are identically zero
on modes $|\boldsymbol k|\ge K$, including the genuinely new modes
$N_0\le|\boldsymbol k|\le N$ that do not exist at training time
(Remark~\ref{rmk:fnomult}). The FrFNO multiplier, indexed by the continuous
coordinate $z$, is defined and tabulated on all of them. Finally, with $K$ fixed, $\sigma_{K,N}\to\sigma_K$ as $N\to\infty$, the retained-band
terms converge to finite continuum limits by standard sampling theory, and
$\varepsilon_{\rm tab}$ is $N$-independent by Remark~\ref{prop:tab}. Collecting the
finite limits yields \cref{eq:floor-fr} and \cref{eq:floor-fno}. The retained-band comparison is not claimed as a theorem.
Both are the approximation/generalization errors of matched backbones, and we
report it numerically. \qed

\subsection{Proof of the end-to-end error theorem (Theorem 4.1)}

The statement of Theorem~4.1 and its termwise comparison (Eq.~(4.5) of the main
paper) are in Section~4 of the main paper. Write the predictor as
$\mathcal G_N=u_{\rm base}^N+\mathcal R_\theta^N$ and split the error by two
triangle inequalities,
\begin{equation*}
\|u-\mathcal G_N\|\le
\underbrace{\|u-u_{\rm base}\|}_{\text{(a)}}
+\underbrace{\|u_{\rm base}-u_{\rm base}^N\|}_{\text{(b)}}
+\underbrace{\|r-\mathcal P_Kr\|}_{\text{(c)}}
+\underbrace{\|\mathcal P_Kr-\mathcal R_\theta^N\|}_{\text{(d)}}.
\end{equation*}
Term (a) is the propagator/model error $\varepsilon_{\rm prop}$ bounded by
\cref{eq:eprop}. Term (b) is the $L_1$ plus five-point consistency error, also inside
$\varepsilon_{\rm prop}$. Term (c) is the fixed-$K$ residual spectral tail
$\sigma_K(r)=\varepsilon_K$, bounded by Proposition~\ref{prop:reg}. Term (d) collects
$\varepsilon_{\rm approx}$, the $n$-sample generalization error
$\varepsilon_{\rm gen}=O(n^{-\beta_{\rm gen}})$, and the alias error
$\varepsilon_{\rm alias}$, the last bounded by $\sigma_K(r)$ (zero for the
injected base) by Remark~\ref{lem:alias}. Taking expectation gives the end-to-end bound
(Eq.~(4.3) of the main paper).
For the matched FNO the identical decomposition holds with $r$ replaced by $u$,
with truncation and alias source $\sigma_K(u)$. The two floors are compared
band by band in Proposition~\ref{prop:floor}. What the injection changes is not
the Sobolev \emph{order} of the tail (Proposition~\ref{prop:reg} gives the same
order for $r$ and $u$, with a constant-factor reduction)
but which out-of-band field the network must leave unrepresented: FrFNO carries
$\varepsilon_{\rm prop}+\sigma_K(r)+\varepsilon_{\rm low}^{\rm Fr}$ against the
FNO's $\sigma_K(u)+\varepsilon_{\rm low}^{\rm FNO}$, and under zero-shot
super-resolution the modes beyond the training band are filled exactly by the
tabulated propagator ($\varepsilon_{\rm tab}$ independent of $N$) rather than set
to zero. FrFNO therefore lies below the FNO floor whenever the table defect is
small and the out-of-band energy of the residual is below that of the full field,
i.e.\ $\sigma_K(r)<\sigma_K(u)$, which holds in the weakly perturbed, short-horizon
regime of Proposition~\ref{prop:reg} and is verified band by band in the numerics.

\subsection{Integer-order limit corollary}

\begin{corollary}[Integer-order limit]\label{cor:int}
At $\alpha=s=1$ the propagator is the heat semigroup
$E_0(t)=e^{-ct(-\Delta)}$ with modal factor $e^{-cT\lambda}$, an exponential
rather than algebraic decay. This factor is represented by the
resolution-independent table to table precision (i.e. up to $\varepsilon_{\rm tab}$),
so the band-filling mechanism of
Proposition~\ref{prop:floor} continues to hold and the linear out-of-band content
is captured with no learned multiplier. At $\alpha=1$ the $L_1$ weights collapse
($b_0=1$, $b_j=0$ for $j\ge1$) to backward Euler with first-order consistency
$O(\tau)=O(\tau^{2-\alpha})$. A second-order integer-order limit would require an
$L_1$-2 or Crank--Nicolson discretisation, which is not used here.
\end{corollary}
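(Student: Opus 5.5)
The proof has three parts: specialize the Mittag--Leffler objects at $\alpha=s=1$, check that \cref{prop:tab,prop:floor} use no property lost at this endpoint, and compute the $L_1$ recurrence explicitly. \textbf{First}, from the series $\Ea(z)=\sum_{m\ge0}z^m/\Gamma(1+m\alpha)$ at $\alpha=1$ we have $\Gamma(1+m)=m!$, so $E_1(z)=e^z$. With $s=1$ and $\Lap^{2s}=-\Delta$, the frozen propagator is $E_0(t)=e^{-ct(-\Delta)}$, with modal factor $e^{-cT\lambda_{\boldsymbol k}}$. The bounds (ML1)--(ML3) hold trivially for $e^{-x}$, since $0<e^{-x}\le1$, $e^{-x}\le 1/(1+x)\cdot C$, and $\sup_{x\ge0} xe^{-x}=e^{-1}$. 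Hence \cref{eq:anreg} and the semigroup facts used earlier remain valid at this endpoint.

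\textbf{Second}, I would re-examine \cref{prop:tab,prop:floor} and confirm that they use only two properties. (i) The base is a real diagonal multiplier depending on the grid only through the scalar $z^{(N)}_{\boldsymbol k}=cT(q^{(N)}_{\boldsymbol k})$. (ii) The interpolation defect on the fixed log-spaced $z$-grid is uniform in $N$. For (ii) I would apply the bound \cref{eq:tabacc} with $g_1(z)=e^{-z}$. In the log coordinate $\xi=\log z$, the $p$-th derivative of $e^{-e^\xi}$ is a finite combination of terms $z^je^{-z}$ with $j\le p$. Each such term is bounded, so $G_p<\infty$ and $\varepsilon_{\rm tab}\le C_pM^{-p}$ independently of $N$. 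We also need $\alpha=1$ to be a node or interior point of the $\alpha$-grid of the table. This holds for the extended training range $\alpha\in[0.6,1]$, so bilinear interpolation in $(\alpha,s)$ adds no extrapolation error. With (i) and (ii) in hand, the Parseval band decomposition in the proof of \cref{prop:floor} goes through verbatim. In particular, the FrFNO out-of-band error is still $\sigma_{K,N}(r)$ plus the table defect, and the linear out-of-band content is carried by the table with no learned multiplier.

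\textbf{Third}, for the discretization claim, I would evaluate the weights in \cref{eq:L1}. They give $b_0=1/\Gamma(1)=1$ and $b_j=((j+1)^0-j^0)/\Gamma(1)=0$ for $j\ge1$. The recurrence \cref{eq:g-rec} then collapses to $g^n=g^{n-1}/(1+\tau\mu)$, which is backward Euler, so $g^{N_t}=(1+cT\lambda/N_t)^{-N_t}$. Its local truncation error is $O(\tau)$, which equals $O(\tau^{2-\alpha})$ at $\alpha=1$.

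The main obstacle is that \cref{lem:l1} carries a factor $z=cT\lambda$, which is not uniform over the high modes, whereas band filling needs a defect that is uniform in $\boldsymbol k$. At $\alpha=1$ I would bypass the lemma and prove directly that
\[
\sup_{x\ge0}\bigl|(1+x/n)^{-n}-e^{-x}\bigr|\le C/n .
\]
I would split into two ranges. For $x\le\sqrt n$, use $\log(1+y)=y-y^2/2+O(y^3)$ together with the mean value theorem, which gives a bound $x^2e^{-x}/n$ up to constants. For $x>\sqrt n$, both terms are $O(1/n)$, since $(1+x/n)^{-n}\le(1+1/\sqrt n)^{-n}$ decays like $e^{-\sqrt n}$. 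This gives a uniform $O(\tau)$ consistency, which completes the corollary.
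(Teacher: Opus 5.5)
Your proposal is correct and follows the same outline as the paper. The paper gives no separate proof of this corollary; it justifies it inline by specializing $E_1(z)=e^z$, appealing to the resolution-independent table (\cref{prop:tab}) and the Parseval band split of \cref{prop:floor}, and noting that the $L_1$ weights collapse to backward Euler with the $O(\tau^{2-\alpha})$ rate of \cref{lem:l1}.

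The genuine difference is in that last step. The bound in \cref{lem:l1} is $C\tau\,cT\lambda$ at $\alpha=s=1$, which grows with the mode. It therefore does not show that the tabulated $L_1$ multiplier reproduces $e^{-cT\lambda}$ uniformly on the out-of-band modes. You bypass the lemma with the closed form $g^{N_t}=(1+cT\lambda/N_t)^{-N_t}$ and the two-range estimate $\sup_{x\ge0}|(1+x/n)^{-n}-e^{-x}|\le C/n$. This gives a mode-uniform defect $\varepsilon_{\rm tab}+C/N_t$, which is what is needed for the table to represent the continuum factor on every mode. Your check that $G_p<\infty$ for $e^{-e^{\xi}}$ also makes explicit what \cref{prop:tab} takes for granted. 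The paper's route buys brevity by treating $\alpha=1$ as one point of the general-$\alpha$ theory. Yours buys a quantitatively uniform statement at the endpoint.

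One small correction concerns table coverage. Whether $\alpha=1$ is covered is determined by the table's own order grid ($\alpha\in[0.45,1.05]$ in the implementation), not by the training range $[0.6,1]$. Since $1$ is not a node of that $41$-point grid, bilinear interpolation in $\alpha$ adds a small $\alpha$-interpolation error that is not contained in $\varepsilon_{\rm tab}$. Likewise, $s=1$ lies outside the stored $s$-range $[0.25,0.90]$, which is harmless only because the stored curve does not depend on $s$. The paper ignores this as well, so it does not weaken your argument relative to the paper's.
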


\section{Additional numerical details}\label{sm:numerics}

This section collects the five-test summary table (\cref{tab:verify}) and the
complete data for the two most consequential tests (Exp.~1 and Exp.~5) of
Section~5.7 of the main paper; the complete raw data for Exp.~2--4 are in the code
repository.

\subsection{Summary of the five tests}

\begin{table}[htbp]
\centering\small
\caption{Five numerical tests of the theoretical predictions. Each row states the
theoretical prediction and the observed result.}
\label{tab:verify}
\setlength\tabcolsep{4pt}
\begin{tabular}{p{0.11\textwidth}p{0.35\textwidth}p{0.44\textwidth}}
\toprule
Test & Prediction & Observation \\
\midrule
Exp.~1: error floor & Fixed $K$ gives a resolution plateau (Pred.~iv) & Both models plateau past $129^2$ (increment $+1.57$ for $129^2\!\to\!257^2$), but the FrFNO floor ($10.8\%$) is less than half the FNO floor ($22.9\%$) \\
Exp.~2: $s$-scaling & Out-of-band advantage grows with diffusion order $s$ as fractional damping strengthens (empirical trend; \cref{prop:floor}) & $R$ grows $1.38\to7.69$ as $s:0.4\to0.8$; FrFNO error decreases $14.5\%\to4.8\%$ while FNO increases $20.1\%\to36.9\%$ \\
Exp.~3: propagator error & $\varepsilon_{\rm prop}=O(\eta)+O(T^\alpha)+O(\beta)$ (Proposition~\ref{prop:prop}) & Measured slopes: $\eta$: $1.016$ ($R^2=.9999$); $T$: $0.828\approx\alpha$ ($R^2=.9999$); $\beta$: $0.931$ ($R^2=.990$) \\
Exp.~4: Born residual & Residual $\approx$ first Born term (spectral justification) & Median per-mode spectrum ratio $P_r/P_1\in[0.996,1.032]$, log-spread $\le0.012$; single Born term accounts for $92.5$--$96.2\%$ of residual at $\eta=0.2$ \\
Exp.~5: floor control & Floor governed by bottleneck $\min(K,N_0)$ (Corollary~\ref{cor:plateau}) & $N\to\infty$ curve confirms plateau (FrFNO floor $10.6\%$, FNO $21.1\%$); residual and full-field tails share the same high-$K$ slope with the ratio saturating near one (Exp.~5b, same-order prediction of Proposition~\ref{prop:reg}); $K$-as-bottleneck scan at $N_0=65$ shows floor dropping $6.57\to5.36\%$ before saturating; fixed $17^2$ labels widening $K$ \emph{raises} floor ($10.6\to18.5\%$) \\
\bottomrule
\end{tabular}
\end{table}

Exp.~1 and Exp.~5 share one frozen $513^2$ truth (solved at $N_t=6400$, evaluated with $N_t=3200$ propagator), locally averaged to
each $17/65/129/257^2$ evaluation grid; their complete bootstrap and bandwidth data
follow below. The \cref{tab:verify} Exp.~1 main
($10.8/22.9\%$) is the matched-$\Delta t$ quick scan, whereas
\cref{tab:floor-full,tab:twobw-full} use the frozen $513^2$ truth over
$48$ fields with bootstrap intervals; the protocols differ slightly in point
values but give the same plateau and ordering.
Exp.~2 measures the advantage ratio $R=\mathrm{err}_{\rm FNO}/\mathrm{err}_{\rm
FrFNO}$ as $s$ varies: $R$ grows $1.38\to7.69$ as $s:0.4\to0.8$, with the FrFNO
error decreasing $14.5\%\to4.8\%$ while FNO increases $20.1\%\to36.9\%$; this is
the empirical $s$-trend associated with the band-filling mechanism of
Proposition~\ref{prop:floor}, as the exact propagator dominates more strongly
at higher diffusion order. Exp.~3 regresses the propagator error against
$\eta,T^\alpha,\beta$ and obtains first-order slopes $1.016$ ($R^2=.9999$),
$0.828\approx\alpha$ ($R^2=.9999$), $0.931$ ($R^2=.990$). Exp.~4 compares the
residual spectrum with the first Born term: the median per-mode ratio $P_r/P_1$
lies in $[0.996,1.032]$ with log-spread $\le0.012$, and a single Born term accounts
for $92.5$--$96.2\%$ of the residual at $\eta=0.2$, the spectral justification for
learning only a fixed-$K$ correction on top of the propagator.

\subsection{Exp.~1: complete bootstrap analysis of the error floor}

We use $2000$ bootstrap resamples of the $48$ held-out fields, with replacement. The
paired increment $\Delta_{129\to257}$ is computed as
$\frac{1}{B}\sum_{b=1}^B(e_{257}^{(b)}-e_{129}^{(b)})$ for each bootstrap sample $b$,
and the $95\%$ confidence interval is the $2.5$--$97.5$ percentile range.

\cref{tab:floor-full} reports the mean errors and $95\%$ confidence intervals for
the fixed-$K$ error floor experiment. For FrFNO, the $129^2\!\to\!257^2$ increment is
$\Delta=-0.32$ with interval $[-0.64,+0.11]$, which contains zero; for FNO, the
increment is $\Delta=+0.72$ with interval $[+0.40,+1.04]$, which is strictly positive.
This establishes the plateau in the strict, statistical sense.

\begin{table}[htbp]
\centering\small
\caption{Exp.~1: fixed $K=10$, single frozen $513^2$ truth downsampled to every
grid, $48$ fields, bootstrap $95\%$ CI.}
\label{tab:floor-full}
\begin{tabular}{lcccc}
\toprule
Evaluation grid & $17^2$ & $65^2$ & $129^2$ & $257^2$\\
\midrule
FrFNO mean (\%) & 33.11 & 12.49 & 10.36 & 10.03\\
FrFNO $95\%$ CI & $[30.9,35.4]$ & $[11.2,13.8]$ & $[9.1,11.7]$ & $[8.7,11.4]$\\
FNO mean (\%) & 33.10 & 19.98 & 20.34 & 21.06\\
FNO $95\%$ CI & $[30.9,35.4]$ & $[18.7,21.4]$ & $[18.7,22.1]$ & $[19.3,23.1]$\\
\midrule
FrFNO increment & n/a & $-20.6$ & $-2.14$ & $-0.32\;[-0.64,+0.11]$\\
FNO increment & n/a & $-13.1$ & $+0.37\;[-0.28,+1.09]$ & $+0.72\;[+0.40,+1.04]$\\
\bottomrule
\end{tabular}
\end{table}

\subsection{Exp.~5: complete training-bandwidth data}

\Cref{tab:twobw-full} reports the error floor (at the $257^2$ evaluation tier)
along three axes. The left block widens declared modes on fixed $17^2$ labels and
shows that the floor \emph{rises} (FrFNO: $10.58\to14.80\to18.52$; FNO:
$21.06\to48.33\to51.65$). The middle block refines the training grid at fixed
$K=10$ and lowers the floor monotonically (FrFNO: $10.58\to7.75\to6.47$; FNO:
$21.06\to15.74\to9.31$). The right block enlarges $K$ and the training grid
\emph{together}: the floor drops relative to the baseline (FrFNO $10.58\to8.47\to6.77$)
but does not beat grid-only refinement at the matched training resolution, because
within this budget the training-label bandwidth $N_0$ is the active bottleneck and
$K=10$ already has sufficient spectral capacity; enlarging $K$ beyond the bottleneck
adds parameters without useful high-mode signal and slightly raises estimation
variance.

\begin{table}[htbp]
\centering\small
\caption{Exp.~5: error floor (\%, rel.$L^2$ at $257^2$ evaluation tier) along three
axes: widen $K$ only, refine training grid only, and enlarge both together (single
frozen $513^2$ truth, $48$ fields).}
\label{tab:twobw-full}
\setlength\tabcolsep{4pt}
\begin{tabular}{l|ccc|cc|cc}
\toprule
 & \multicolumn{3}{c|}{fixed $17^2$ labels, vary $K$} &
\multicolumn{2}{c|}{fixed $K=10$, vary train grid} &
\multicolumn{2}{c}{enlarge $K$ and grid together}\\
 & $K{=}10$ & $K{=}16$ & $K{=}24$ & $33^2$ & $65^2$ &
 $K{=}16,\,33^2$ & $K{=}24,\,65^2$\\
\midrule
FrFNO & $10.58$ & $14.80$ & $18.52$ & $7.75$ & $\mathbf{6.47}$ & $8.47$ & $6.77$\\
FNO   & $21.06$ & $48.33$ & $51.65$ & $15.74$ & $\mathbf{9.31}$ & $30.19$ & $18.77$\\
\bottomrule
\end{tabular}
\end{table}

The controlling knob for the error floor is thus the realisable spectral bandwidth
$\min(K,N_0)$: the floor moves down only when the \emph{bottleneck} side is enlarged.
Widening $K$ without enlarging the training-label bandwidth merely adds zero-padded
modes with no genuine high-mode signal to fit, because a $17^2$ label grid carries only
$15$ interior degrees of freedom per axis. Conversely, once the training grid is the
bottleneck, refining it at fixed $K{=}10$ is sufficient, and enlarging $K$ in the
same step does not further lower the floor. To directly verify the complementary
regime in which $K$ itself is the bottleneck, we train at $N_0=65$ ($63$ interior
spectral degrees of freedom) and scan $K\in\{4,6,8,12\}$. Three additional
diagnostic experiments complete the Exp.~5 picture. The analytic-propagator advantage
of FrFNO is preserved on every axis.

\subsubsection{Exp.~5a: $N\to\infty$ convergence to a constant floor}

Corollary~\ref{cor:plateau} predicts that, at fixed $K$ and $N_0$, refining the evaluation
grid $N$ drives the error to a nonzero constant floor rather than to zero.
\Cref{fig:exp5-conv} plots the rel.$L^2$ error of the baseline configuration
($K{=}10$, $N_0=17$) against $N\in\{17,65,129,257\}$. Both networks drop
sharply from $N{=}17$ to $N{=}65$ and then plateau: the FrFNO error changes from
$10.50\%$ at $129^2$ to $10.58\%$ at $257^2$ (increment $+0.08\%$), and the FNO
error from $20.34\%$ to $21.06\%$ (increment $+0.72\%$). The FrFNO floor
($10.6\%$) is less than half the FNO floor ($21.1\%$). This directly confirms the
plateau prediction of Corollary~\ref{cor:plateau}.

\begin{figure}[htbp]
\centering
\includegraphics[width=0.7\textwidth]{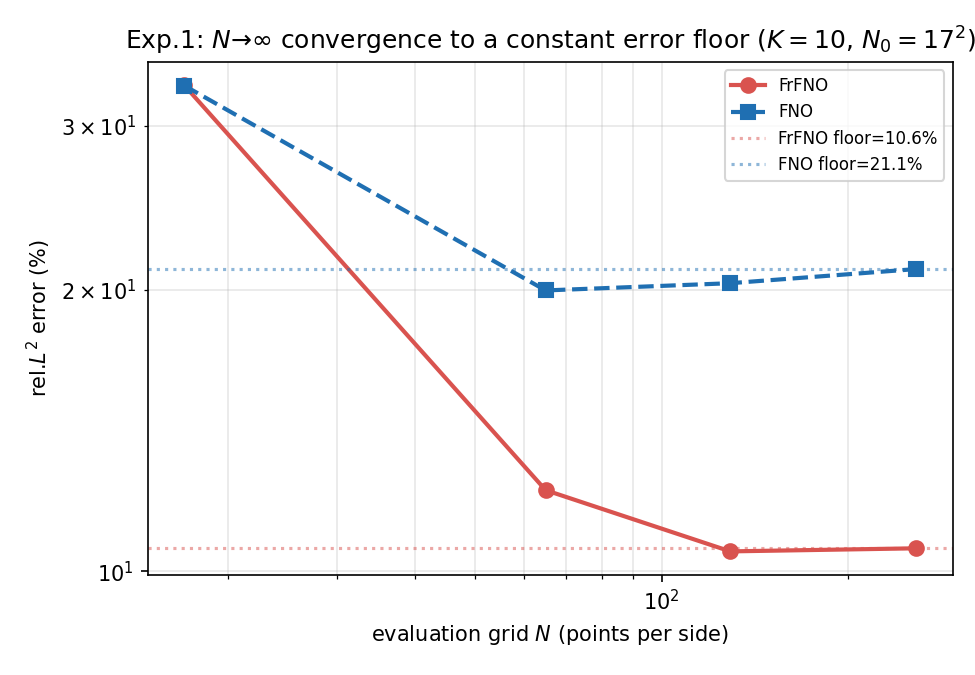}
\caption{Exp.~5a: error versus evaluation grid $N$ for the baseline ($K{=}10$,
$N_0=17$). Both networks plateau past $129^2$; the FrFNO floor ($10.6\%$) is
less than half the FNO floor ($21.1\%$).}
\label{fig:exp5-conv}
\end{figure}

\subsubsection{Exp.~5b: residual and full-field tails have the same Sobolev order}

Proposition~\ref{prop:reg} predicts that the residual $r=u-\Ebase$ and the full
solution $u$ carry the \emph{same} Sobolev order: a same-order principal-part
perturbation cancels the $2s$ derivatives supplied by the fractional heat
semigroup, so the tail ratio is a $K$-independent constant,
\begin{equation}\label{eq:tail-ratio}
\frac{\sigma_K(r)}{\sigma_K(u)}\le C(\eta,\beta,T),
\end{equation}
strictly below one only in the weakly perturbed regime. Here
$\sigma_K(f)^2=\sum_{\sqrt{i^2+j^2}\ge K}|\hat f_{ij}|^2$ is the
unnormalized DST tail energy, matching the theory. We measure both tails on the
true-solution residual at $N_0=65^2$ on a dense set of truncation modes
$K=2,4,\dots,40$ (\cref{fig:exp5-spec}).

The measurement confirms the same-order conclusion. At high wavenumbers
($K\ge12$) the ratio is essentially flat, $\sigma_K(r)/\sigma_K(u)\approx0.93\sim
1.04$, i.e.\ the two tails decay at the same rate; over the low band $K=2\sim 12$
the ratio rises from $0.32$ to $\approx1$
as $K$ crosses the dissipation cutoff, showing that the base field absorbs the
\emph{low}-mode linear evolution (where its energy is concentrated) but not the
high-mode tail. Two controls isolate the mechanism. With the nonlinear advection
switched off ($\beta=0$, variable $\nu$) the ratio is again asymptotically flat
(slope $\approx+0.5$ over the low band, saturating at high $K$): the variable
coefficient alone, being a same-order perturbation, supplies no extra
regularity. With constant coefficient and no nonlinearity ($\nu\equiv1,\beta=0$)
the residual is only numerical noise and the ratio collapses to $\approx0.03$,
verifying the implementation and the degenerate case $r\equiv0$ of the
proposition. The resolution advantage of FrFNO is therefore not a steeper
residual tail; as Proposition~\ref{prop:floor} states, it comes from the
resolution-independent table filling the out-of-band modes under
super-resolution, consistent with the much smaller degradation of FrFNO than the
matched FNO when the evaluation grid is refined (Exp.~5a and the main-text
Burgers comparison).

\begin{figure}[htbp]
\centering
\includegraphics[width=0.7\textwidth]{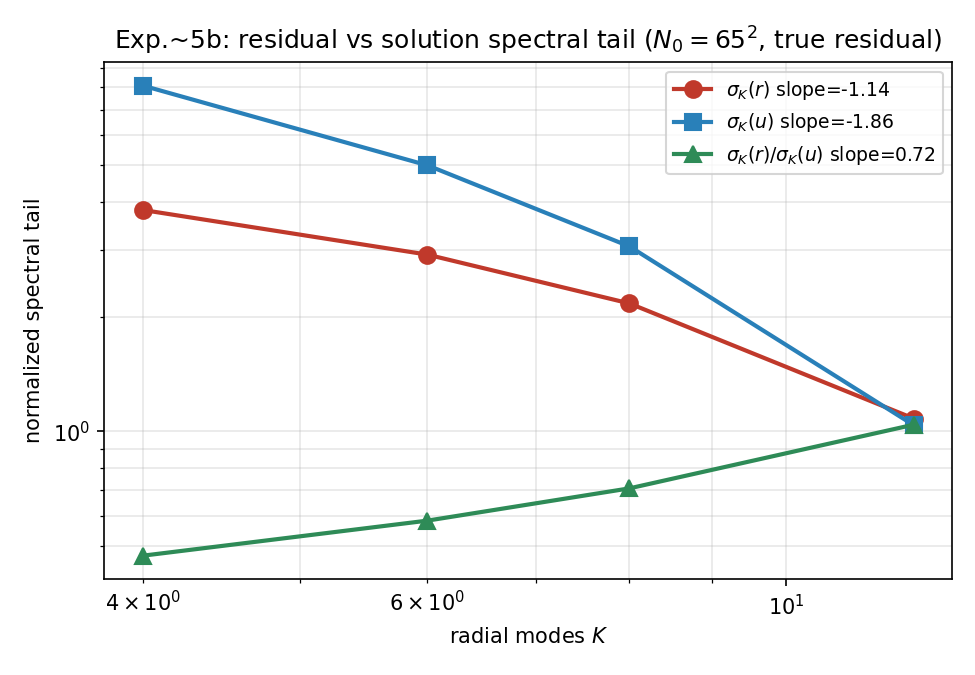}
\caption{Exp.~5b: unnormalized DST tail energies $\sigma_K(u)$ (full solution),
$\sigma_K(r)$ (true-solution residual), and their ratio versus the truncation
$K$ at $N_0=65$ (nonlinear Burgers). The two tails have the same high-$K$ slope
and the ratio saturates near one for $K\ge12$, confirming the same-order
prediction of Proposition~\ref{prop:reg}; the ratio is below one only on the low
modes that the base field represents exactly.}
\label{fig:exp5-spec}
\end{figure}

\subsubsection{Exp.~5c: $K$ as the bottleneck at $N_0=65$}

To verify the complementary half of the $\min(K,N_0)$ statement, we train FrFNO and
FNO at $N_0=65$ ($63$ interior spectral degrees of freedom) with
$K\in\{4,6,8,12\}$, so that $K<N_0$ and $K$ is the active bottleneck.
\Cref{tab:exp5-kbottleneck} reports the error floor at the $257^2$ evaluation tier,
and \cref{fig:exp5-kbottleneck} contrasts this regime with the $N_0=17$ regime
where $N_0$ is the bottleneck.

In the $K$-bottleneck regime (left panel of \cref{fig:exp5-kbottleneck}), the FrFNO
floor drops from $6.57\%$ at $K{=}4$ to $5.36\%$ at $K{=}6$ and then saturates
($5.35\%$ at $K{=}8$, $5.32\%$ at $K{=}12$). The initial drop confirms that widening
$K$ lowers the floor when $K$ is the bottleneck, exactly as Corollary~\ref{cor:plateau}
predicts. The saturation past $K{=}6$ reflects the limited spectral content of
the retained-band correction: once $K$ exceeds the number of modes on which the
residual network carries appreciable signal (the out-of-band modes being supplied
by the propagator table), further widening adds no useful signal and the floor
stabilizes. The FNO floor follows the same
qualitative pattern ($9.95\%\to8.89\%\to9.35\%\to10.70\%$) but remains uniformly
higher, and at large $K$ it rises again through overfitting.

In the $N_0$-bottleneck regime (right panel, fixed $17^2$ labels), widening $K$
\emph{raises} the floor for both networks (FrFNO $10.58\to18.52\%$; FNO
$21.06\to51.65\%$), because the extra modes have no genuine high-frequency signal to
fit and only increase estimation variance. The sharp contrast between the two panels
directly verifies the $\min(K,N_0)$ bottleneck mechanism of Corollary~\ref{cor:plateau}.

\begin{table}[htbp]
\centering\small
\caption{Exp.~5c: error floor (\%, rel.$L^2$ at $257^2$) at $N_0=65$ with
$K\in\{4,6,8,12\}$. Here $K<N_0$ so $K$ is the active bottleneck; the FrFNO floor
drops from $K{=}4$ to $K{=}6$ and then saturates.}
\label{tab:exp5-kbottleneck}
\setlength\tabcolsep{6pt}
\begin{tabular}{l|cccc}
\toprule
 & $K{=}4$ & $K{=}6$ & $K{=}8$ & $K{=}12$\\
\midrule
FrFNO & $6.57$ & $\mathbf{5.36}$ & $5.35$ & $5.32$\\
FNO   & $9.95$ & $8.89$ & $9.35$ & $10.70$\\
\bottomrule
\end{tabular}
\end{table}

\begin{figure}[htbp]
\centering
\includegraphics[width=0.95\textwidth]{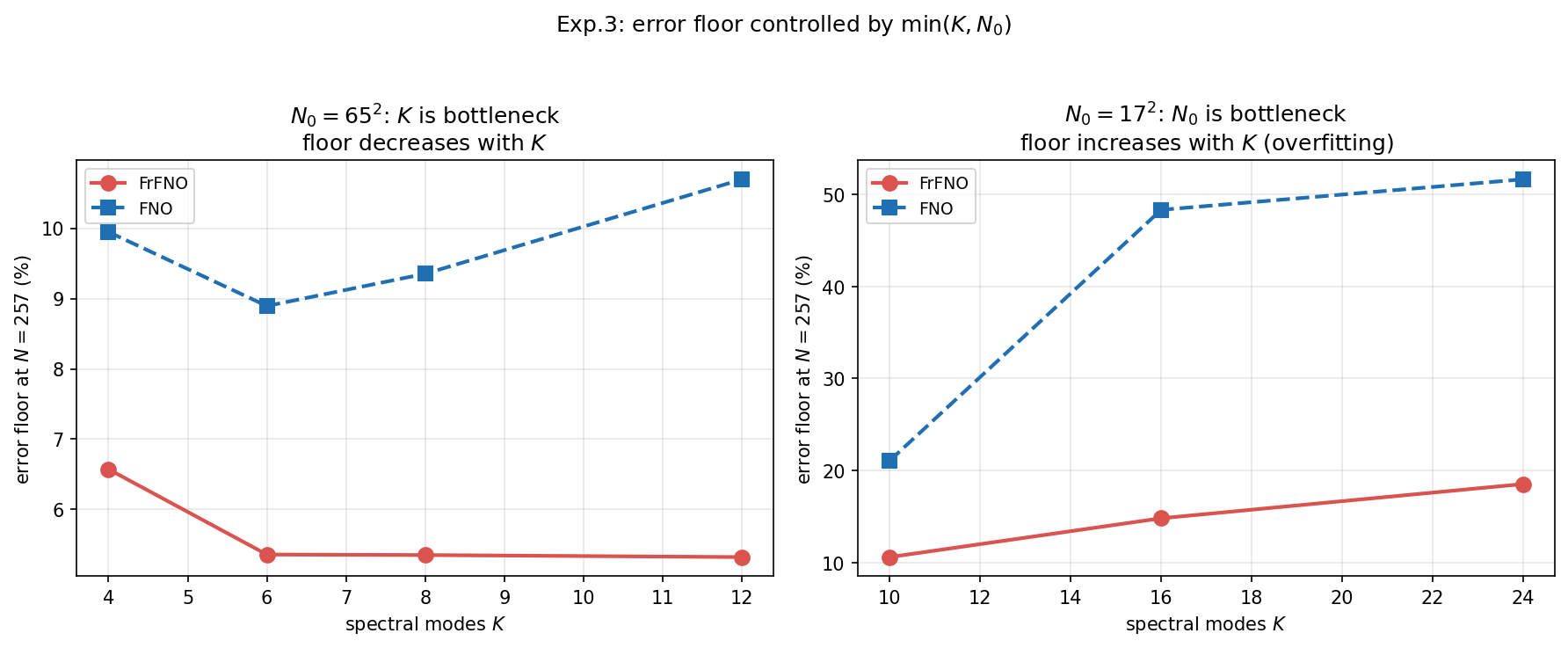}
\caption{Exp.~5c: error floor versus $K$ in two regimes. \emph{Left}: $N_0=65$,
where $K$ is the bottleneck---the FrFNO floor decreases with $K$ before saturating.
\emph{Right}: $N_0=17$, where $N_0$ is the bottleneck---widening $K$ raises the
floor through overfitting. The contrast directly verifies the $\min(K,N_0)$ mechanism.}
\label{fig:exp5-kbottleneck}
\end{figure}

\subsection{Ablation study}

\paragraph{Single-knockout ablation, short and long horizon}
\cref{tab:c1} removes each component in turn at both horizons. Removing the
analytic base is by far the dominant change ($9.49\%\to22.53\%$ at $T=0.015$ and
$22.69\%\to34.99\%$ at $T=0.06$). The secondary components are near-neutral on the
short horizon (DynFrac2d costs $0.41$ point and neighbor orders lie within seed
noise) but matter more at the four-times-longer horizon, where the frozen
propagator is less exact: DynFrac2d and neighbor orders
each cost $2.6\sim 3.7$ points at $129^2$. The neighbor fields leave the
training-resolution error almost unchanged but degrade zero-shot super-resolution,
so they carry cross-resolution order context and are retained. There is no
separate ``full-field output'' row, as that configuration is exactly the matched FNO;
and the DynFrac2d filter exponent ($\alpha$, $s$, or both) moves the result by less
than $0.1$ point.

\begin{table}[htbp]
\centering\small
\caption{Single-knockout ablation at the short horizon ($T=0.015$, common 3000-step
budget) and the long horizon ($T=0.06$, matched $1.88\times10^6$-parameter backbone, $128$
fields/order, $5000$ steps, $32$ test fields), \%. Absolute values sit above the
longer-budget main tables; the \emph{relative} attribution is what the
ablation establishes.}
\label{tab:c1}\label{tab:c2b}
\setlength\tabcolsep{2pt}
\begin{tabular}{p{0.24\textwidth}|ccc|ccc}
\toprule
 & \multicolumn{3}{c|}{short $T=0.015$} & \multicolumn{3}{c}{long $T=0.06$}\\
Variant & $17^2$ & $65^2$ & $129^2$ & $17^2$ & $65^2$ & $129^2$\\
\midrule
full FrFNO & 2.575 & 7.727 & 9.491 & 5.090 & 20.130 & 22.694\\
\ \ no analytic base $u_{\rm base}$ & 4.258 & 18.745 & 22.534 & 4.523 & 29.094 & 34.987\\
\ \ no DynFrac2d feature & 2.760 & 8.071 & 9.905 & 5.351 & 23.121 & 25.330\\
\ \ no neighbor orders & 2.664 & 7.259 & 9.320 & 5.203 & 23.913 & 26.434\\
\bottomrule
\end{tabular}
\end{table}

\paragraph{Seeds, data volume, model size, and hyperparameters}
Across three seeds FrFNO reaches $9.49/9.70/9.84\%$ at $129^2$ (worst $9.84\%$),
while the best FNO seed is at least $21.7\%$: the worst FrFNO seed still beats the
best FNO seed by a wide margin. Error decreases monotonically with training fields
per order ($11.29\%$ at $16$, $9.89\%$ at $64$, $9.49\%$ at $128$), and a compact
$701$K network ($K=6$) already attains $9.36\%$, on par with the $1881$K model: the
propagator table supplies the out-of-band modes and the residual network only
represents a small-amplitude correction inside the retained band, so a narrow
band suffices (Proposition~\ref{prop:floor}). The order grid $3\times3/5\times5/9\times9$
gives $10.26/9.89/9.49\%$ (saturated by $9\times9$); $K=6/8$ and learning rates
$10^{-3}/3\times10^{-3}$ move the result by less than one point. The method is thus
insensitive to these secondary hyperparameters once the propagator is present.
\cref{tab:rigor} summarizes the sweeps.

\begin{table}[htbp]
\centering
\caption{Summary of the rigor sweeps at $129^2$ (\%, common budget).}
\label{tab:rigor}
\begin{tabular}{lll}
\toprule
Sweep & Settings & $129^2$ error range\\
\midrule
Seeds & FrFNO 3 seeds & $9.49$--$9.84$\\
 & FNO 3 seeds & $21.71$--$22.67$\\
Data & $16/32/64/128$ fields per order & $11.29/10.06/9.89/9.49$\\
Model size & $701$K ($K=6$) / $1217$K ($K=8$) & $9.36/9.25$\\
Order grid & $3\times3$ / $5\times5$ / $9\times9$ & $10.26/9.89/9.49$\\
Learning rate & $10^{-3}$ / $3\times10^{-3}$ & $9.50/10.16$\\
\bottomrule
\end{tabular}
\end{table}

\subsection{Equation-class generality}

The equation-class generality results are reported in Section~5.9 of the main text: eight further equation settings share the same fractional dissipative principal part $-\nu(x)(-\Delta)^su$ and differ only in the zero-order or system structure, with FrFNO outperforming the matched FNO in every setting.

\paragraph{Equation definitions.} All settings use the Caputo time derivative ${}^C D_t^\alpha$ and the same training protocol (64 fields per order pair, 3000 steps, batch 64, width 32, depth 4, $K=7$ modes), unless noted otherwise. Initial conditions are drawn from a KL expansion of a Gaussian random field with correlation length $0.35$ and smoothness $10^{-4}$.

\begin{enumerate}
\item \textbf{Periodic Riesz Burgers.} On $\mathbb T^2=[0,1)^2$ with periodic boundary conditions,
\begin{equation}
{}^C D_t^\alpha u=-\nu(x)(-\Delta_R)^s u-\beta u\,\partial_x u,
\end{equation}
where $(-\Delta_R)^s$ is the periodic Riesz fractional Laplacian with FFT symbol $(k_x^2+k_y^2)^s$. Parameters: $\beta=3$, $T=0.015$, $\alpha\in\{0.7,0.85,1.0\}$, $s\in\{0.5,0.75,1.0\}$. Train at $16^2$, zero-shot to $32^2,64^2$.

\item \textbf{Linear fractional diffusion.} On $\Omega=(0,1)^2$ with homogeneous Dirichlet BC,
\begin{equation}
{}^C D_t^\alpha u=-\nu(x)(-\Delta)^s u.
\end{equation}
Parameters: $T=0.02$, $\alpha\in\{0.7,0.85,1.0\}$, $s\in\{0.5,0.75,1.0\}$. Train at internal $15^2$, zero-shot to $31^2,63^2$.

\item \textbf{Fisher--KPP.} On $\Omega=(0,1)^2$ with homogeneous Dirichlet BC,
\begin{equation}
{}^C D_t^\alpha u=-\nu(x)(-\Delta)^s u+\rho\,u(1-u),\qquad \rho=6.
\end{equation}
Parameters: $T=0.02$, same order grids as the linear case.

\item \textbf{Allen--Cahn.} On $\Omega=(0,1)^2$ with homogeneous Dirichlet BC,
\begin{equation}
{}^C D_t^\alpha u=-\nu(x)(-\Delta)^s u+\rho(u-u^3),\qquad \rho=4.
\end{equation}
Parameters: $T=0.02$, same order grids.

\item \textbf{Two-component vector Burgers.} On $\Omega=(0,1)^2$ with homogeneous Dirichlet BC,
\begin{equation}
{}^C D_t^\alpha u=-\nu_u(x)(-\Delta)^s u-(u\,\partial_x u+v\,\partial_y u),\quad
{}^C D_t^\alpha v=-\nu_v(x)(-\Delta)^s v-(u\,\partial_x v+v\,\partial_y v),
\end{equation}
with independent variable diffusivities $\nu_u,\nu_v$ per component. Parameters: $T=0.05$, frozen coefficient $c_0=0.25$ in the propagator, same order grids.

\item \textbf{Non-symmetric coupled system.} On $\Omega=(0,1)^2$ with homogeneous Dirichlet BC,
\begin{equation}
{}^C D_t^\alpha \mathbf U=-\nu(x)(-\Delta)^s\mathbf U-J\mathbf U+\mathcal N(\mathbf U),\quad
\mathbf U=(u,v)^\top,\quad
\mathcal N(\mathbf U)=-(\mathbf U\cdot\nabla)\mathbf U,
\end{equation}
with the constant non-symmetric coupling matrix
\begin{equation}
J=\begin{pmatrix}1.2&0.8\\0.3&0.6\end{pmatrix},
\end{equation}
whose eigenvalues are real and positive (uniformly dissipative). The full matrix propagator diagonalizes $J$ once and applies the scalar Mittag--Leffler multiplier in eigen-coordinates. Parameters: $T=0.05$, $c_0=1.0$, same order grids.

\item \textbf{2D incompressible fractional NS (vorticity).} On $\mathbb T^2$ with periodic BC,
\begin{equation}
{}^C D_t^\alpha \omega=-\nu(-\Delta_R)^s\omega-u\,\partial_x\omega-v\,\partial_y\omega,\quad
-\Delta\psi=\omega,\quad (u,v)=(\partial_y\psi,-\partial_x\psi),
\end{equation}
where the velocity is recovered from vorticity by the spectral Biot--Savart inversion. Parameters: $T=0.15$, $\nu\in\{0.10,0.15,0.20\}$, $\alpha\in\{0.7,0.85,1.0\}$, $s\in\{0.6,0.8,1.0\}$. Train at $16^2$, zero-shot to $32^2,64^2$.

\item \textbf{3D incompressible fractional NS (vector vorticity).} On $\mathbb T^3$ with periodic BC,
\begin{equation}
{}^C D_t^\alpha\boldsymbol\omega=-\nu(-\Delta_R)^s\boldsymbol\omega+(\boldsymbol\omega\cdot\nabla)\mathbf u-(\mathbf u\cdot\nabla)\boldsymbol\omega,\quad
\boldsymbol\omega=\nabla\times\mathbf u,\quad \nabla\cdot\mathbf u=0,
\end{equation}
where the velocity is recovered by the 3D spectral Biot--Savart inversion $\hat{\mathbf u}=i(\mathbf k\times\hat{\boldsymbol\omega})/|\mathbf k|^2$, and the new term $(\boldsymbol\omega\cdot\nabla)\mathbf u$ is vortex stretching. Parameters: $T=0.10$, $\nu=0.15$, $\alpha\in\{0.85,1.0\}$, $s\in\{0.75,1.0\}$. Deliberately small-scale: train at $12^3$, zero-shot to $16^3,24^3$, 1500 steps, width 24, depth 3, $K=5$.
\end{enumerate}

In every setting the injected propagator is the exact frozen-coefficient modal Mittag--Leffler / L1 multiplier (scalar for settings 1--5 and 7, eigen-rotated matrix for setting 6, componentwise scalar for setting 8); the residual network learns only the variable-coefficient, nonlinear, and nonlocal parts.

\paragraph{Reference solvers.} All ground-truth data are generated by a GPU spectral reference solver that shares the same L1 implicit-explicit (IMEX) skeleton: the fractional dissipative principal part is treated implicitly in the modal basis, while variable-coefficient corrections, reaction, and advection are treated explicitly, with two Picard inner iterations per time step. Variable coefficients are frozen at their spatial maximum $c=\max_x\nu(x)$ for the implicit solve, and the remainder $(\nu-c)(-\Delta)^s u$ is added explicitly. The time step count $N_t$ scales with resolution ($400$ at the training grid, $800$/$1600$ at the super-resolution grids for 2D settings; $200$/$300$/$460$ for the 3D setting). The spatial discretization and nonlinear treatment differ by boundary type:

\begin{itemize}
\item \textbf{Dirichlet settings} (2--6): DST-I sine transform with the five-point Laplacian symbol $\lambda_k=4\sin^2(k_x\pi/2N)+4\sin^2(k_y\pi/2N)$; advection uses a first-order upwind difference; reaction terms are evaluated pointwise explicitly.
\item \textbf{Periodic settings} (1, 7, 8): FFT with the Riesz fractional-Laplacian symbol $|\mathbf k|^{2s}$; advection uses periodic upwind; for the Navier--Stokes settings the velocity is recovered from vorticity at every step by the spectral Biot--Savart inversion ($\hat\psi=\hat\omega/|\mathbf k|^2$ in 2D; $\hat{\mathbf u}=i\mathbf k\times\hat{\boldsymbol\omega}/|\mathbf k|^2$ in 3D, which enforces $\nabla\cdot\mathbf u=0$).
\item \textbf{Coupled system} (6): the constant matrix $J$ is diagonalized once; in eigen-coordinates the linear part decouples into two scalar problems, each solved by the scalar L1 multiplier, and the result is rotated back.
\end{itemize}

All solvers are batched on GPU and use float32 for the time stepping with float64 for the L1 weight accumulation; the same solver produces both training and test data.

\section{Implementation details}\label{sm:impl}

This section collects all reproducibility details of the FrFNO implementation.
It covers data generation (\cref{app:data}), network architecture (\cref{app:arch}),
training protocol (\cref{app:train}), reference solver (\cref{app:ref}),
and the propagator table (\cref{app:table}).

\subsection{Data generation}\label{app:data}

\paragraph{Initial fields $u_0$}
Each initial condition is a multiscale random sine field on the unit square
with homogeneous Dirichlet boundary.
\begin{equation}\label{eq:gen-u0}
u_0(x,y)=\sum_{i=1}^{K_0}\sum_{j=1}^{K_0}
\frac{c_{ij}}{\sqrt{i^2+j^2}}\,\sin(i\pi x)\sin(j\pi y),
\qquad c_{ij}\stackrel{\text{i.i.d.}}{\sim}\mathcal N(0,1),
\end{equation}
with $K_0=8$ modes in each direction (64 modes total). The algebraic
factor $(i^2+j^2)^{-1/2}$ imposes a mild high-frequency decay. After
synthesis the field is rescaled so that the root-mean-square amplitude on
the interior grid points equals $0.5$. The boundary is pinned to zero
numerically. Training uses 128 independent fields (seeds $1000+i$) and the
held-out test set uses 48 fields (seeds drawn uniformly from $[10^5,10^6)$,
disjoint from the training seed range).

\paragraph{Test set generation}
The $N_{\rm te}=48$ held-out samples use a separate random stream (seed $2024$,
drawn after all training data). Each triple $(u_0,\nu,\alpha,s)$ is independent,
with $u_0$ from \cref{eq:gen-u0} on seeds in $[10^5,10^6)$, $\nu$ an independent
log-normal KL field with its own coefficient vector $\xi^{(\rm te)}$, and orders
drawn \emph{continuously}, $\alpha\sim U[0.55,0.95]$, $s\sim U[0.35,0.78]$, almost
all off the $9\times9$ training grid so as to probe FiLM interpolation directly.
The set is thus disjoint in all three input dimensions, and test references use
the same three resolution pairs, $N_t=400/800/1600$ for $N=16/64/128$.

\paragraph{Diffusion coefficient fields $\nu(x,y)$}
The coefficient is a smooth \emph{log-normal} random field generated by a
truncated Karhunen--Lo\`eve (KL) expansion. The first $K_{\rm KL}=20$
cosine eigenfunctions are selected from an $8\times8$ candidate pool, ordered
by the eigenvalue $1+m^2+n^2$.
\begin{equation}\label{eq:gen-nu-kl}
g(x,y)=\sigma_g\sum_{k=1}^{K_{\rm KL}}\sqrt{e_k}\,\xi_k\,\phi_k(x,y),
\qquad \xi_k\stackrel{\text{i.i.d.}}{\sim}\mathcal N(0,1),
\end{equation}
where $\phi_k(x,y)=\cos(m_k\pi x)\cos(n_k\pi y)$ (normalized to unit
$L^2$ norm), $e_k=(1+m_k^2+n_k^2)^{-\beta_{\rm KL}}$ with spectral decay
$\beta_{\rm KL}=1.0$ (normalized so that $\sum_k e_k=1$), and
$\sigma_g=0.35$ controls the log-amplitude. The diffusivity
is then
\begin{equation}\label{eq:gen-nu-exp}
\nu(x,y)=\frac{\exp(g(x,y))}{\frac{1}{|\Omega|}\int_\Omega \exp(g)\,dx},
\end{equation}
which guarantees strict positivity and unit spatial mean. The analytic base
field is therefore always evaluated at $\bar\nu=1$, and the network learns
only the spatially heterogeneous residual. The coefficient contrast is
$\eta=\|\delta\nu\|_{L_\infty}/\bar\nu\approx 0.35$ in practice.

\paragraph{Order grid}
The fractional orders are sampled on a uniform $9\times9$ grid.
$\alpha\in[0.55,0.95]$ and $s\in[0.35,0.78]$. Each training field is
paired with every order setting, giving $128\times81=10\,368$ reference
pairs at the training resolution. At each gradient step one order pair is
drawn uniformly from the grid and shared across the mini-batch.

\paragraph{Training-time pairing of $(u_0,\nu,u_{\rm base},\alpha,s)$}
The diffusivity is \emph{not} fixed per initial condition. Each
$(\alpha,s,u_0)$ triple receives an independent $\nu$. During dataset
construction a coefficient array $\Xi\in\mathbb R^{9\times9\times128\times
K_{\rm KL}}$ ($K_{\rm KL}=20$) is drawn i.i.d., and the diffusivity of triple
$(\alpha_i,s_j,u_0^{(k)})$ uses the slice $\Xi[i,j,k,:]$, so the same $u_0$ is
paired with different $\nu$ at different orders, yielding
$9\times9\times128=10\,368$ distinct combinations rather than $128$ fixed
$(u_0,\nu)$ pairs. At each gradient step one order pair is drawn uniformly
(shared across the batch), $64$ field indices without replacement, and the
per-triple diffusivity $\texttt{make\_nu}(\Xi[i,j,k_m,:])$, current and four
neighbor-order base fields, and normalized residual target are assembled on the
fly. The analytic bases are computed from the precomputed table, and only the
table and reference solutions are stored.

\subsection{Network architecture}\label{app:arch}

The residual network is the \texttt{DualNet} module. All main-table
experiments use \texttt{DualNet(7, seed=1)}, i.e.\ 7 explicit input
channels and all other parameters at their defaults.

\paragraph{Explicit input channels (7)}
The stacked input tensor has shape $(B,7,H,W)$ and contains, in order.
(1)~the initial field $u_0$, (2)~the analytic base field $\Ebase$ at the
current order $(\alpha,s)$, (3)~the diffusivity field $\nu$, and
(4)--(7)~the analytic base fields at the four nearest order neighbors
$(\alpha\pm\Delta\alpha,s\pm\Delta s)$.

\paragraph{Internally appended channels (6)}
Before the lifting layer, six additional channels are concatenated
internally.
\begin{itemize}
\item two coordinate channels $g_y,g_x$ (linearly spaced grids broadcast to
  the full spatial shape).
\item two DynFrac2d output channels (see below), obtained by fractional
  low-pass filtering of $g_y$ and $g_x$.
\item two constant channels carrying the scalar values of $\alpha$ and $s$
  broadcast over the spatial domain.
\end{itemize}
The tensor fed to the lifting layer therefore has
$7+2+2+2=13$ channels.

\paragraph{DynFrac2d fractional feature layer}
For each of the first \texttt{field\_dim} input channels (default 2, i.e.\
$g_y$ and $g_x$), the module computes
\begin{equation}\label{eq:dynfrac}
\mathcal F_\alpha[f]=\mathrm{iFFT}_{2\mathrm d}\!\Big[
\widehat f(\boldsymbol k)\,e^{-\alpha\log|\boldsymbol k|}\,
\mathbf 1_{\{|\boldsymbol k|_\infty<K\}}\Big],
\end{equation}
where $K=10$ is the same mode truncation used by the spectral convolutions.
The multiplier $e^{-\alpha\log|\boldsymbol k|}=|\boldsymbol k|^{-\alpha}$
is an $\alpha$-controlled algebraic low-pass filter. Each filtered field is
appended as an additional channel. Setting \texttt{field\_dim=0} disables
the layer. All spectral models in this paper (FrFNO, FNO and PINO) use
\texttt{field\_dim=2}.

\paragraph{Lifting, spectral blocks, FiLM, projection}
A pointwise two-layer perceptron lifts the 13 channels to the hidden width $w=48$
($13\!\to\!128\!\to\!48$, equivalent to $1\times1$ convolutions). Four spectral
blocks each add a $\mathrm{SpecConv}_{K=10}$ (learned complex weights
$w_1,w_2\in\mathbb C^{48\times48\times10\times10}$ on the lowest modes of both
$k_x$ corners) to a $1\times1$ convolution, then apply FiLM
$h\leftarrow(1+f_\gamma(e))\odot h+f_\beta(e)$, with
$e=\mathrm{MLP}_{2\to32\to32}(\alpha,s)$ (GELU) and per-block affine heads
$f_\gamma,f_\beta:\mathbb R^{32}\to\mathbb R^{48}$ (the ``$1+$'' initialization
preserves the identity map); LeakyReLU follows the first three blocks. Zero-padding
$p=4$ is applied before the blocks and removed after the last, and a pointwise
$48\!\to\!128\!\to\!1$ perceptron returns the single-channel residual.

\paragraph{Parameter count}
The full FrFNO (\texttt{DualNet(7)}, width 48, depth 4, modes 10,
\texttt{field\_dim=2}) has approximately $1.88\times10^6$ trainable
parameters. The matched FNO baseline uses the identical backbone with
\texttt{in\_dim=2}, \texttt{field\_dim=2}, and nearly the same parameter count.

\subsection{Baseline surrogate architectures and hyperparameters}\label{app:baselines}

All baselines solve the same conditional map
$(u_0,\nu,\alpha,s)\mapsto u(T)$ with one network covering the entire order
grid, are trained on the same $17^2$ labels, the same per-step random draw,
the same $8\,000$ steps and mini-batch $64$, and the same cosine schedule to
$5\times10^{-5}$ in float32. Widths and depths are chosen so that all models
lie in the same $0.38$--$2.1\times10^6$ parameter band. Following the
recommendation of each method's reference implementation, the CNO uses
AdamW with an $L^1$ loss. PINO uses Adam with the data MSE plus the $0.5$-weighted PDE residual
and the remaining three baselines (PDNO, DeepONet, U-Net) use Adam with the plain
MSE loss. Spatial models receive the two fields $(u_0,\nu)$ together with the
broadcast scalar channels $(\alpha,s)$. Spectral models use the channels-last
convention $(B,H,W,C)$.

\paragraph{PINO (physics-informed neural operator)}
PINO reuses the FNO spectral backbone, \texttt{DualNet(in\_dim=2, modes=10,
width=48, n\_layers=4, field\_dim=2, pad=4, emb=32, seed=7)}. The inputs $(u_0,\nu)$ are
joined by two coordinate grids, two $\alpha$-controlled fractional features
\cref{eq:dynfrac}, and two constant $(\alpha,s)$ channels (eight total), lifted
$8\!\to\!128\!\to\!48$. Four $\mathrm{SpecConv}_{K=10}+1\times1$ blocks with
per-block FiLM and a $48\!\to\!128\!\to\!1$ projection follow. It differs from FNO
only in the objective. To the data MSE it adds a weak PDE residual on a
bicubic-upsampled $33^2$ collocation grid (first-order upwind advection, DST
fractional Laplacian, normalized by $\sigma_{\rm train}^2$),
$L_{\rm data}+0.5\,L_{\rm PDE}$.

\paragraph{PDNO (learned-symbol pseudo-differential operator)}
\texttt{PDNO2d(width=72, n\_layers=6, hid=160, emb=32, seed=6)} lifts two
coordinate grids, $(u_0,\nu)$ and two order channels (six total) through
$6\!\to\!128\!\to\!72$. Six blocks evaluate a continuous complex symbol from the
five features $(\log(1+|\boldsymbol k|),k_x/(|\boldsymbol k|+\varepsilon),
k_y/(|\boldsymbol k|+\varepsilon),\alpha,s)$ via a $5\!\to\!160\!\to\!160\!\to\!2w$
GELU MLP at every integer \texttt{rfft2} mode (hence grid-independent), multiply
the spectrum channel-wise, and add a $1\times1$ bypass with per-block FiLM. A
$72\!\to\!128\!\to\!1$ projection closes the network.

\paragraph{CNO (convolutional neural operator)}
We use the vendored \emph{simplified} \texttt{CNO2d} (bicubic up/down-sampling with
antialiasing), \texttt{CNOWrap(size=17, ch=32, use\_bn=False, seed=3)}. Four input
channels lift to 16 and pass through three encoder/decoder levels of widths
$16/32/64$ with four residual blocks per level, four bottleneck blocks, $3\times3$
convolutions, and no batch norm. Inputs at other resolutions are bicubic-downsampled
to the fixed $17^2$, evaluated, and upsampled back. The official \emph{classic}
variant with Kaiser-windowed sinc filters was statistically tied and gave no
zero-shot transfer here, so the simpler variant is reported throughout.

\paragraph{DeepONet (branch--trunk)}
\texttt{DeepONet2d(p=128, sensor=17, w=256, seed=4)} bilinearly downsamples to a
fixed $17^2$ sensor grid and appends $(\alpha,s)$ (four channels). A branch CNN
(Conv--pool--Conv--pool--Conv--FC to $p=128$ coefficients, LeakyReLU, $2\times2$
average pooling) is combined at every query point with a trunk MLP on
$(x,y,\alpha,s)$ ($4\!\to\!256\!\to\!256\!\to\!128$, LayerNorm, LeakyReLU) via
$p^{-1/2}\sum_m b_m t_m(\boldsymbol x)$, so the output follows the query grid while
the branch sensor stays at $17^2$.

\paragraph{U-Net (pure-convolution baseline)}
\texttt{UNet2d(base=48, seed=5)} appends $(\alpha,s)$ (four channels) and uses a
three-level encoder--decoder with skip connections. DoubleConv $3\times3$ blocks,
encoder widths $48/96/192$, a $192$-channel bottleneck, $2\times2$ average-pool
down and bilinear up with channel concatenation, and a $1\times1$ output
$96\!\to\!1$. The result is bilinearly restored to the input size (arbitrary
resolutions, with no discretization guarantee).

\begin{table}[htbp]
\centering\small
\caption{Architectures and training settings of the five baseline surrogates
used in the main tables of Sections 5.2 and 5.3. Common protocol: identical training fields and
per-step random draw, $8\,000$ steps, mini-batch $64$, cosine annealing to
$5\times10^{-5}$, float32, normalized targets.}
\label{tab:baselines}
\setlength\tabcolsep{2pt}
\begin{tabular}{l p{1.6in} c l l}
\toprule
Model & Key configuration & Par.\ ($10^6$) & Optimizer & Loss\\
\midrule
PINO     & FNO spectral backbone; PDE residual on $33^2$, $\lambda=0.5$ & 1.880 & Adam $1.5\mathrm e{-3}$ & MSE$+0.5\,r$\\
PDNO     & width 72, 6 learned-symbol blocks, symbol MLP hidden 160 & 0.380 & Adam $1.5\mathrm e{-3}$ & MSE\\
CNO      & 3 levels, channels $16/32/64$, 4 residual blocks, no BN & 2.012 & AdamW $1\mathrm e{-3}$ & $L^1$\\
DeepONet & branch CNN + trunk MLP, $p=128$, $w=256$ & 2.073 & Adam $1.5\mathrm e{-3}$ & MSE\\
U-Net    & 3 levels, base width 48, skip connections & 1.828 & Adam $1.5\mathrm e{-3}$ & MSE\\
\bottomrule
\end{tabular}
\end{table}

\subsection{Training protocol}\label{app:train}

\begin{itemize}
\item \textbf{Optimizer:} Adam with learning rate $1.5\times10^{-3}$ and
  weight decay $10^{-5}$.
\item \textbf{Schedule:} cosine annealing to $5\times10^{-5}$ over the full
  training run.
\item \textbf{Steps:} $8\,000$ gradient steps for the main tables (Sections 5.2--5.4).
  The short- and long-horizon ablations use $3\,000$ steps ($T=0.015$) and
  $5\,000$ steps ($T=0.06$), the left and right blocks of \cref{tab:c1}.
\item \textbf{Mini-batch:} at each step one order pair $(\alpha,s)$ is drawn
  uniformly from the $9\times9$ grid, and 64 initial/coefficient field
  pairs are drawn without replacement from the 128 training fields. The same
  order pair is shared across all 64 fields in the mini-batch.
\item \textbf{Training resolution:} $N=16$ intervals per axis, i.e.\ a $17^2$
  grid ($15\times15$ interior points), with $N_t=400$ time steps for the
  reference solver.
\item \textbf{Loss:} mean-squared error on the residual
  $\|\Rres-\big(u(T)-\Ebase\big)\|_2^2$ for FrFNO. The FNO baseline
  minimizes MSE on the full field $\|\widetilde\Rres-u(T)\|_2^2$.
\item \textbf{Precision:} single precision (float32) for network training and
  inference. The propagator table is also stored in float32.
\end{itemize}

\subsection{Training and inference algorithms}

Algorithm~\ref{alg:offline} builds the cached propagator table once, on a
$1$-D grid of $z=\lambda^s$ independent of spatial resolution. It is invoked
before any training or inference. Algorithm~\ref{alg:train} then trains the
residual network at the low training resolution $N_0$. At each step the exact
analytic base field is computed from the table and detached from the
computational graph, and only the residual $r=u-\Ebase$ is learned.
Algorithm~\ref{alg:infer} performs zero-shot super-resolution. The same table
is queried at the new resolution $N$, and the trained network is evaluated
without retraining or fine-tuning.

\begin{algorithm}[htbp]
\caption{Offline resolution-independent propagator table}\label{alg:offline}
\begin{algorithmic}[1]
\Require orders $\{\alpha_i\}_{i=1}^{n_a}$, $\{s_j\}_{j=1}^{n_s}$; steps $N_t$; $\bar\nu$; 1-D grid $\{z_m\}_{m=1}^{M}$
\Ensure table $\mathcal T[i,j,m]$
\For{$i=1..n_a$, $j=1..n_s$}
  \State $b_\ell\gets\dfrac{(\ell+1)^{1-\alpha_i}-\ell^{1-\alpha_i}}{\Gamma(2-\alpha_i)},\ \ell=0,\dots,N_t-1$ \Comment{$L_1$ weight}
  \For{$m=1..M$}
    \State $\mu\gets \bar\nu\,z_m$;\quad $g^0\gets1$
    \For{$n=1..N_t$}
      \State $g^n\gets\dfrac{b_{n-1}g^0+
        \sum_{\ell=1}^{n-1}(b_{n-\ell-1}-b_{n-\ell})g^\ell}{b_0+\tau^{\alpha_i}\mu}$
    \EndFor
    \State $\mathcal T[i,j,m]\gets g^{N_t}$
  \EndFor
\EndFor
\State \Return $\mathcal T$
\end{algorithmic}
\end{algorithm}

\begin{algorithm}[htbp]
\caption{FrFNO training (low resolution $N_0$)}\label{alg:train}
\begin{algorithmic}[1]
\Require training fields $\{u_0^{(b)},\nu^{(b)}\}$; order grid $\mathcal A\times
\mathcal S$; horizon $T$; table $\mathcal T$; modes $K$; steps $S_{\rm tr}$
\State generate reference pairs $(u_0,\nu,\alpha,s,u(T))$ on the $N_0=17$ training grid ($17^2$ labels, i.e.\ $N=16$ intervals) via the IMEX scheme (Section~2.4 of the main paper)
\State init $\theta$; Adam$(1.5\times10^{-3}, w_d=10^{-5})$ with cosine decay to
$5\times10^{-5}$
\For{$\mathrm{step}=1..S_{\rm tr}$}
  \State draw a batch $(\alpha,s)\sim\mathcal A\times\mathcal S$ and fields
  \State $q^{(N_0)}\gets$ five-point eigenvalues;\quad
  $g\gets\operatorname{Interp}_{\mathcal T}(\alpha,s,(q^{(N_0)})^s)$
  \State $\Ebase\gets\mathrm{iDST}[g\cdot\mathrm{DST}(u_0)]$ \Comment{exact, detached}
  \State $r\gets\Rres\bigl(u_0,\Ebase,\nu,\{\Ebase^{(\alpha\pm\Delta\alpha,s\pm\Delta s)}\};\alpha,s\bigr)$
  \State $\theta\gets$ Adam step on $\|\Ebase+r-u(T)\|_2^2$
\EndFor
\State \Return $\theta$
\end{algorithmic}
\end{algorithm}

\begin{algorithm}[htbp]
\caption{Zero-shot inference at arbitrary resolution $N$}\label{alg:infer}
\begin{algorithmic}[1]
\Require query $(u_0^N,\nu^N,\alpha,s)$; trained $\theta$; table $\mathcal T$
\State $q^{(N)}\gets$ five-point eigenvalues on the $N$-grid
\State $g^{N_t}\gets\operatorname{Interp}_{\mathcal T}(\alpha,s,(q^{(N)})^s)$
\State $\Ebase^N\gets\mathrm{iDST}_N[g^{N_t}\cdot\mathrm{DST}_N(u_0^N)]$
\State $r^N\gets\Rres$ evaluated on the $N$-grid (fixed $K$, $1\times1$ path, FiLM)
\State \Return $\hat u^N=\Ebase^N+r^N$
\end{algorithmic}
\end{algorithm}

\subsection{Reference solver}\label{app:ref}

All training and test labels are generated by a GPU-accelerated
implicit--explicit (IMEX) scheme.

\paragraph{Time discretization}
The Caputo derivative is discretized by the standard $L_1$ scheme
(see Section~2 of the main paper), the fractional diffusion term is treated
implicitly (backward Euler in the fractional sense), and the nonlinear
advection term $\beta u\partial_x u$ is treated explicitly with a first-order
upwind flux. The resulting linear system at each time step is diagonalized
by the discrete sine transform and solved in $O(N^2\log N)$ per step.

\paragraph{Space discretization}
The fractional Laplacian uses the five-point finite-difference Laplacian
symbol $q_{\boldsymbol k}^{(N)}$ (Eq.~(2.11) of the main paper) raised to
the power $s$, applied via the DST. Reference solutions are computed at three
resolution pairs.
$(N,N_t)=(16,400)$, $(64,800)$, and $(128,1600)$.

\subsection{Propagator table}\label{app:table}

The analytic propagator is precomputed as a lookup table and queried by
trilinear interpolation.

\paragraph{Table construction}
The table is built in the smooth scalar coordinate $z=\lambda^s$, independent of
the evaluation resolution. On a $41\times41$ order grid
($\alpha\in[0.45,1.05]$, $s\in[0.25,0.90]$) and a log-spaced grid
$\{z_m\}_{m=1}^{M}$ of $M=1600$ points over $z\in[1,2\times10^5]$ ($M=400$ is the
fast setting), the $L_1$ recurrence (Eq.~(2.13) of the main paper) with $\mu=\bar\nu\,z_m$ is run
for $N_t=400$ steps at each $z_m$. Since $g$ depends only on $\alpha$ and $z$, one
stored curve serves all $s$. The $(41,41,M)$ table is built once and cached
globally, and rendering it at resolution $N$ is only an $O(N^2)$ evaluation of
$z=q_{\boldsymbol k}^{s}$ on the $(N-1)^2$ interior modes
$(\cref{alg:offline})$.

\paragraph{Interpolation and cost}
A query uses trilinear interpolation of $\mathcal T$ (bilinear in $(\alpha,s)$,
linear along $z=q^s$) from the cached table, which a resolution change never
rebuilds. Building it ($41\times41\times1600$, $N_t=400$) takes about $5$~s. One
query (plus DST/iDST on $129^2$) about $0.3$~ms, negligible against the network
forward pass.

\subsection{Reproducibility details}\label{sec:repro}

All experiments use PyTorch~2.4.1 (CUDA~12.4) on an RTX~3090 (24\,GB) with 64\,GB
host memory, single precision for training and double precision for the propagator
cross-check. Training uses batch size $64$, Adam with learning rate
$1.5\times10^{-3}$ and weight decay $10^{-5}$, cosine annealing to $5\times10^{-5}$,
and $8000$ iterations for the main tables (a uniform $3000$ iteration budget is
used in the controlled rigor sweeps). Random states for data generation, weight
initialization, and batch shuffling are fixed and recorded. The rigor study repeats
training over three seeds. CNO is trained with its official prescription
($\texttt{use\_bn=False}$, AdamW$(10^{-3},w_d=10^{-8})$, $L_1$ loss), since a
generic Adam/MSE recipe diverges on order grids containing $s=1$. Every experiment
writes its driver, log, text summary, metrics archive, final weights, and an
incremental checkpoint that supports restart. Code, weights, and seeds will be
released to reproduce the main table within $\pm0.2$ percentage points.

\bibliographystyle{plainnat}
\bibliography{FrFNO_manuscript}

\end{document}